\newtheorem{thm}{Theorem}[section]  
\newtheorem{cor}[thm]{Corollary}
\newtheorem{defin}[thm]{Definition} 
\newtheorem{lemma}[thm]{Lemma} 
\newtheorem{prop}[thm]{Proposition} 
\newtheorem{ass}[thm]{Assumption} 
\newtheorem*{defin*}{Definition}
\newcommand{\aaa}{\mbox{$\alpha$}}
\newcommand{\bbb}{\mbox{$\beta$}}
\newcommand{\Ddd}{\mbox{$\Delta$}}
\newcommand{\Ggg}{\mbox{$\Gamma$}}
\newcommand{\calA}{\mbox{$\mathcal A$}}
\newcommand{\calD}{\mbox{$\mathcal D$}}
\newcommand{\calE}{\mbox{$\mathcal E$}}
\newcommand{\sss}{\mbox{$\sigma$}}  
\newcommand{\Sss}{\mbox{$\Sigma$}}
\newcommand{\bdd}{\mbox{$\partial$}}
\newcommand{\inter}{\mbox{${\rm int}$}}
\def\real{{\mathbb R}}
\newcommand{\fra}{\mbox{$\mathfrak{a}$}}
\newcommand{\Rrr}{\mbox{$\mathfrak{R}$}}
\newcommand{\Mrr}{\mbox{$M_{\Rrr}$}}
\begin{document}  

\title{A strong Haken's Theorem}

\author{Martin Scharlemann}
\address{\hskip-\parindent
        Martin Scharlemann\\
        Mathematics Department\\
        University of California\\
        Santa Barbara, CA 93106-3080 USA}
\email{mgscharl@math.ucsb.edu}

\date{\today}

\begin{abstract} Suppose $M = A \cup_T B$ is a Heegaard split compact orientable $3$-manifold and $S \subset M$ is a reducing sphere for $M$.  Haken \cite{Ha} showed  that there is then also a reducing sphere $S^*$ for the Heegaard splitting.  Casson-Gordon \cite{CG} extended the result to $\bdd$-reducing disks in $M$ and noted that in both cases $S^*$ is obtained from $S$ by a sequence of operations called $1$-surgeries.  Here we show that in fact one may take $S^* = S$.   \end{abstract}

\maketitle

%\begin{figure}[ht!]
%    \centering
%    \includegraphics[scale=1]{PowellPic}
%    \caption{Powell's proposed generators}
%    \label{fig:PowellPic}
%    \end{figure}    

%\section{Composing Powell generators} \label{sect:compose}

It is a foundational theorem of Haken \cite{Ha} that any Heegaard splitting $M = A \cup_T B$ of a closed orientable reducible $3$-manifold $M$ is reducible; that is, there is an essential sphere in the manifold that intersects $T$ in a single circle.  Casson-Gordon \cite[Lemma 1.1]{CG} refined and generalized the theorem, showing that it applies also to essential disks, when $M$ has boundary. More specifically, if $S$ is a disjoint union of essential disks and $2$-spheres in $M$ then there is a similar family $S^*$, obtained from $S$ by ambient $1$-surgery and isotopy, so that each component of $S^*$ intersects $T$ in a single circle.  In particular, if $M$ is irreducible, so $S$ consists entirely of disks, $S^*$ is isotopic to $S$.  

There is of course a more natural statement, in which $S$ does not have to be replaced by $S^*$.  I became interested in whether the natural statement is true because it would be the first step in a program to characterize generators of the Goeritz group of $S^3$, see \cite{FS2}, \cite{Sc2}.  Inquiring of experts, I learned that this more natural statement had been pursued by some, but not successfully.  Here we present such a proof.  A reader who would like to get the main idea in a short amount of time could start with the example in Section \ref{section:Zupan}.  Recently Hensel and Schultens \cite{HS} have proposed an alternate proof that applies when $M$ is closed and $S$ consists entirely of spheres.  

Here is an outline of the paper:  Sections \ref{sect:intro} and \ref{sect:vertical} are mostly a review of what is known, particularly the use of verticality in classical compression bodies, those which have no spheres in their boundary.  We wish to allow sphere components in the boundary, and Section \ref{sect:vertical2} explains how to recover the classical results in this context.  Section \ref{sect:connreduce} shows how to use these results to inductively reduce the proof of the main theorem to the case when $S$ is connected.  The proof when $S$ is connected (the core of the proof) then occupies Sections \ref{sect:early} through \ref{sect:final}.  

\section{Introduction and Review} \label{sect:intro}

All manifolds considered will be orientable and, unless otherwise described, also compact.  For $M$ a $3$-manifold, a closed surface $T \subset M$ is a {\em Heegaard surface} in $M$ if the closed complementary components $A$ and $B$ are each compression bodies, defined below. This structure is called a {\em Heegaard splitting} and is typically written $M = A \cup_T B$.  See, for example, \cite{Sc1} for an overview of the general theory of Heegaard surfaces.  Among the foundational theorems of the subject is this \cite{CG}:

 Suppose $T$ is a Heegaard surface in a Heegaard split $3$-manifold 
$M = A \cup_T B$ and $D$ is a $\bdd$ reducing disk for $M$, with $\bdd D \subset \bdd_- B \subset \bdd M$.  

\begin{thm}[Haken, Casson-Gordon]  There is a $\bdd$-reducing disk $E$ for $M$ such that
\begin{itemize}
\item $\bdd E = \bdd D$
\item $E$ intersects $T$ in a single essential circle (i. e. $E$ $ \bdd$-reduces $T$)
\end{itemize}
\end{thm}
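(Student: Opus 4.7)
The plan is to modify $D$ by a sequence of complexity-reducing moves until only a single essential circle of intersection with $T$ remains. Put $D$ in general position with respect to $T$; since $\partial D \subset \partial_- B$ is disjoint from $T$, the intersection $D \cap T$ consists of disjoint simple closed curves lying in the interior of $D$. I would employ two moves, each strictly decreasing $|D \cap T|$, and argue that the process terminates with exactly one essential circle.

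The first move removes trivial intersection circles. If some circle of $D \cap T$ bounds a disk on $T$, take an innermost such circle $c \subset T$ bounding a disk $T_0 \subset T$ whose interior is disjoint from $D$. The curve $c$ also bounds a subdisk $D_c \subset D$, and $T_0 \cup D_c$ is a $2$-sphere lying in one of the compression bodies $A$ or $B$. Since compression bodies are irreducible, this sphere bounds a ball, and an isotopy of $D_c$ across the ball pushes $D_c$ off $T$ onto the same side, removing $c$ (and possibly other circles of $D \cap T$ interior to $D_c$) from the intersection. Iterating, we may assume every circle of $D \cap T$ is essential on $T$.

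The second move is the Casson--Gordon $1$-surgery. If $|D \cap T| \geq 2$, pick a circle $c \subset D \cap T$ that is innermost in $D$; it bounds a subdisk $D_0 \subset D$ disjoint from every other intersection circle, lying in say $A$, with essential boundary on $T$ so that $D_0$ is a compressing disk for $T$. Let $D_0'$ be a parallel copy of $D_0$ obtained by pushing slightly into the interior of $A$, and replace $D$ by
\[
D^{\mathrm{new}} \;=\; (D \setminus D_0) \cup D_0',
\]
a properly embedded disk with $\partial D^{\mathrm{new}} = \partial D$. By the innermost choice and the push-off, $D^{\mathrm{new}} \cap T = (D \cap T) \setminus \{c\}$, so $|D^{\mathrm{new}} \cap T| = |D \cap T| - 1$. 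Replace $D$ by $D^{\mathrm{new}}$ and continue.

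Alternating these moves, the intersection count strictly decreases at each stage, so the process terminates with a $\partial$-reducing disk $E$, $\partial E = \partial D$, satisfying $|E \cap T| \leq 1$. Ruling out $|E \cap T|=0$: if $E$ missed $T$, it would lie in $B$ (as $\partial E \subset \partial_- B$), forcing $\partial E$ to bound a disk in the incompressible surface $\partial_- B \subset \partial B$ and contradicting the essentiality of $\partial D$. Hence $|E \cap T|=1$, and the single circle is essential on $T$. The main obstacle is verifying that the second move genuinely reduces the intersection count without introducing new circles: it is the innermost choice of $c$ in $D$, together with a sufficiently small push-off of $D_0'$ into $A$, that prevents $D_0'$ from meeting $T$ or from crossing the rest of $D \setminus D_0$ anywhere other than along $c$ itself.
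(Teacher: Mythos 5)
Your second move is where the argument breaks down, and it is not a repairable slip: pushing the innermost subdisk $D_0$ slightly into the interior of $A$ and regluing does \emph{not} delete the circle $c$. The part of $D$ adjacent to $c$ on the $B$-side of $T$ is untouched and still arrives at $T$ along $c$, so any disk obtained by replacing $D_0$ with a parallel copy pushed deeper into $A$ (joined to $D\setminus D_0$ by the necessary annulus) still crosses $T$ in a circle parallel to $c$; in fact $D^{\mathrm{new}}$ is ambient isotopic to $D$ and $|D^{\mathrm{new}}\cap T| = |D\cap T|$. Genuinely removing $c$ would require replacing $D_0$ by a disk in $B$ bounded by $c$, and no such disk need exist, since $c$ is essential in $T$. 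This is exactly the difficulty that makes the Haken/Casson--Gordon theorem a theorem: once every circle of $D\cap T$ is essential in $T$, there is no local innermost-circle move on $D$ alone that reduces $|D\cap T|$, and the known proofs do not proceed by such an induction. They work with a global object --- Haken's normal surfaces, or (as in the argument this paper actually recalls and builds on, cf.\ Lemma \ref{lemma:isolate} and \cite{ST}) the graph of intersections of the disk with a spine $\Sss$ of $B$ and a complete meridian system for $A$, minimized over isotopies and edge slides of the spine --- and extract the aligned disk from that minimum. Note also that the paper does not reprove the quoted theorem; it cites \cite{Ha} and \cite{CG} and then proves the stronger aligned statement with the spine machinery, so your proposal must stand on its own, and with move 2 failing, the termination argument and hence the whole proof collapses.

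There is also a smaller gap in your first move: choosing $c$ innermost in $T$ guarantees only that the interior of $T_0$ misses $D$; the subdisk $D_c\subset D$ bounded by $c$ may still meet $T$, so $T_0\cup D_c$ need not lie in a single compression body, and even when it does, the ball it bounds may contain other sheets of $D$, so ``isotope $D_c$ across the ball'' is not justified as written. The standard repair is surgery rather than isotopy: replace $D_c$ by a slight push-off of $T_0$. This requires no irreducibility, yields an embedded disk with the same boundary (which is all the statement asks for --- $E$ need not be isotopic to $D$), and strictly decreases $|D\cap T|$. So move 1 is fixable; the essential missing idea is a correct replacement for move 2, i.e.\ the global argument above.
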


Note: $D$ and $E$ are isotopic if $M$ is irreducible; but if $M$ is reducible then there is no claim that $D$ and $E$ are isotopic.  

There is a similar foundational theorem, by Haken alone \cite{Ha}, that  if $M$ is reducible, there is a reducing sphere for $M$ that intersects $T$ in a single circle (i. e. it is a reducing sphere for $T$).  But Haken made no claim that the reducing sphere for $T$ is isotopic to a given reducing sphere for $M$.  

The intention of this paper is to fill this gap in our understanding.  We begin by retreating to a more general setting.  For our purposes, a {\em compression body} $C$ is a connected $3$-manifold obtained from a (typically disconnected) closed surface $\bdd_- C$ by attaching $1$-handles to one end of a collar of $\bdd_- C$.  The closed connected surface $\bdd C - \bdd_- C$ is denoted $\bdd_+ C$. This differs from what may be the standard notion in that we allow $\bdd_- C$ to contain spheres, so $C$ may be reducible.  Put another way, we take the standard notion, but then allow the compression body to be punctured finitely many times.  In particular, the compact $3$-manifolds whose Heegaard splittings we study may have spheres as boundary components.  

Suppose then that $M = A \cup_T B$ is a Heegaard splitting, with $A$ and $B$ compression bodies as above.  A disk/sphere set $(S, \bdd S) \subset (M, \bdd M)$ is a properly embedded surface in $M$ so that each component of $S$  is either a disk or a sphere. A sphere in $M$ is called {\em inessential} if it either bounds a ball or is parallel to a boundary component of $M$; a disk is inessential if it is parallel to a disk in $\bdd M$.  $S$ may contain such inessential components, but these are easily dismissed, as we will see.  

\begin{defin} The Heegaard splitting $T$ is {\em aligned} with $S$ (or vice versa) if each component of $S$ intersects $T$ in at most one circle.
\end{defin} 
%, and if it is a single circle then that circle is essential in $T$.  
For example, a reducing sphere or $\bdd$-reducing disk for $T$, typically defined as a sphere or disk that intersects $T$ in a single essential circle, are each important examples of an aligned disk/sphere.  This new terminology is introduced in part because, in the mathematical context of this paper, the word 'reduce' is used in multiple ways that can be confusing.  More importantly, once we generalize compression bodies as above, so that some boundary components may be spheres, there are essential spheres and disks in $M$ that may miss $T$ entirely and others that may intersect $T$ only in curves that are inessential in $T$.  We need to take these disks and spheres into account.

\begin{thm} \label{thm:main}
Suppose that $(S, \bdd S) \subset (M, \bdd M)$ is a disk/sphere set in $M$.  Then there is an isotopy of $T$ so that afterwards $T$ is aligned with $S$.

Moreover, such an isotopy can be found so that, after the alignment, the annular components $S \cap A$, if any, form a vertical family of spanning annuli in the compression body $A$, and similarly for $S \cap B$.
%there is a complete collection of meridian disks in $A$ that is disjoint from $S \cap A$, and similarly for $B$.
 \end{thm}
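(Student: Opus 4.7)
The plan is to follow the outline indicated in the introduction: first reduce to the case when $S$ is connected, then handle the connected case as the core of the argument. Before either of these, I would dispose of inessential components of $S$ (spheres bounding balls or parallel to sphere boundary components of $M$, and disks parallel to disks in $\bdd M$) by isotoping them into small collar or ball neighborhoods, so that each either misses $T$ or meets it in a single unavoidable circle that can be arranged by a small isotopy of $T$. Thus we may assume every component of $S$ is essential. The verticality clause will be obtained at the end, after alignment, by invoking the compression-body machinery recovered in Sections \ref{sect:vertical} and \ref{sect:vertical2}.

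For the reduction to a connected $S$, I would induct on the number of essential components. Given at least two, choose a component $S_0$ that is outermost in an appropriate partial order (for instance, a sphere cobounding with a portion of $\bdd M$ a submanifold disjoint from the remaining components, or an outermost disk). Applying the connected case of the theorem to $S_0$ alone aligns $T$ with $S_0$ after isotopy. Cutting $M$ along $S_0$ yields a (possibly disconnected) $3$-manifold that inherits a Heegaard splitting from $T$, together with $S \setminus S_0$ as a disk/sphere set having fewer components. By induction $T$ can be aligned with the remaining components, and reassembling along $S_0$ gives the full alignment. The care required here, handled in Section \ref{sect:connreduce}, is that the cut manifold really does inherit a Heegaard splitting into compression bodies, which in turn uses verticality of the annulus pieces of $S_0 \cap A$ and $S_0 \cap B$, and that the two alignments can be made compatible across $S_0$.

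The heart of the argument is the connected case: a single essential disk or sphere $S$. Casson-Gordon's refinement of Haken's theorem already provides some $S^*$, obtained from $S$ by $1$-surgeries and ambient isotopy, that is aligned; the task is to show that these $1$-surgeries are unnecessary. My plan is a complexity-reduction argument on $T$: minimize a lexicographic pair consisting of $|T \cap S|$ together with a secondary measure (for instance, the number of circles of $T \cap S$ that are inessential on $T$), then apply the Casson-Gordon construction to the minimizing $T$. I would then analyze the bands $\beta$ used to perform the $1$-surgeries and show that each such band either exhibits a disk or annulus in $A$ or $B$ along which $T$ may be isotoped — strictly reducing the complexity, contradicting minimality — or else $S$ is already aligned. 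The main obstacle, precisely the situation highlighted by the Zupan example in Section \ref{section:Zupan}, is that the reverse of a $1$-surgery can interact subtly with the spine structures of $A$ and $B$; controlling this interaction, presumably by tracking disks in the compression bodies and using the structure of their spines, is what Sections \ref{sect:early} through \ref{sect:final} are devoted to, and is where I expect the real technical difficulty to lie.

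For the verticality conclusion, once alignment has been achieved each annulus component of $S \cap A$ arises from a disk component of $S$ with boundary on $\bdd_- A$, and is an essential spanning annulus in $A$ joining $\bdd_- A$ to $T = \bdd_+ A$. The extended results of Sections \ref{sect:vertical} and \ref{sect:vertical2} imply that any disjoint family of essential spanning annuli in a compression body, even one with sphere components in $\bdd_-$, is properly isotopic rel its boundary on $\bdd_-$ to a vertical family. Applying such an isotopy inside a collar neighborhood of $T$ on the $A$-side, and independently on the $B$-side with the alignment circles $S \cap T$ held fixed throughout, upgrades the configuration to the vertical one claimed without disturbing the alignment.
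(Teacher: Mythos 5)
Your disposal of inessential components and the broad two-stage structure (reduce to connected $S$, then prove the connected case) do match the paper's outline, but the core of your plan — the connected case — diverges from the paper and, as written, has a genuine gap that you yourself acknowledge.

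For the reduction step, the paper inducts on the complexity $(g,s)$ of the pair $(M,T)$, where $g = \mathrm{genus}(T)$ and $s$ is the number of boundary spheres, rather than on the number of components of $S$. This matters because the delicate part is not aligning $T$ with the remaining components after cutting along $S_0$ (which the inductive hypothesis on a lower-complexity manifold handles), but \emph{reassembling}: the tube that reconnects the two pieces of $T$ across $S_0$ must be slid off the already-aligned remaining components of $S$. The paper does this with Lemma \ref{lemma:vertbeta} and Proposition \ref{prop:vertdodge} (the ``vertical dodge''), which in turn require the whole verticality apparatus of Sections \ref{sect:vertical}--\ref{sect:vertical2}. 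Your sentence ``the two alignments can be made compatible across $S_0$'' names the problem but provides no mechanism; this is not a cosmetic point, and there are four separate cases in Section \ref{sect:connreduce} precisely because the mechanism differs by the type of $S_0$.

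For the connected case itself, you propose minimizing a lexicographic complexity involving $|T \cap S|$ and then analyzing the bands of a Casson-Gordon $1$-surgery, hoping to show each band is unnecessary. That is exactly the strategy the introduction reports that experts tried and did not bring to completion, and it is \emph{not} what the paper does. The paper never touches the Casson-Gordon bands. Instead it views $B$ as a thin neighborhood of a spine $\Sss$, so that isotopies of $T$ become edge slides of $\Sss$, and introduces the \emph{stem swap} (Lemma \ref{lemma:Zupan}, Proposition \ref{prop:stemswap}): using the Hass-Thompson neon-bulb lemma, any arc of $\Sss$ attached to a spherical component of $\bdd_- B$ can be replaced by any other properly embedded arc in $A$ to the same sphere, and this replacement is realized by an actual isotopy of $T$. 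The connected-case argument is then a two-level minimization: first minimize $(|\Sss \cap S|, |\bdd\Delta \cap S|)$ to force $\Sss \cap \mathrm{int}(S)$ to consist only of reducing edges (Lemma \ref{lemma:isolate}); then introduce the system $\Rrr$ of edge-reducing spheres, minimize the weight $w(\Rrr) = |\Rrr \cap S|$, and use carefully bookkept stem swaps (Lemmas \ref{lemma:stemswap1}, \ref{lemma:stemswap1mult}, Proposition \ref{prop:Rrr-}) to drive $w(\Rrr)$ to zero and then clear the stems off $S$ entirely (Propositions \ref{prop:main2}, \ref{prop:final}). Without the stem-swap idea — which is the real innovation here — your proposed minimization has no move available to break the kind of linking that the Zupan example exhibits.

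Finally, a smaller but real issue with your verticality paragraph: an essential spanning annulus in a compression body is \emph{not} automatically isotopic to a vertical one when $\bdd_- C$ contains spheres; Lemma \ref{lemma:sannulus} needs the hypotheses that at most one arc of the family meets each boundary sphere and that circles on non-spherical components are essential. In the paper, verticality is not bolted on at the end by a separate isotopy; it falls out of the construction, because after the final stem swaps $S \cap B$ is literally a collar of $\bdd S$ (or a meridian disk, in the sphere case), and the arcs $\beta_\pm$ used in the reduction step are shown vertical by Lemma \ref{lemma:vertbeta}.
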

 
The terminology ``vertical family of spanning annuli" is defined in Section \ref{sect:vertical}.  
  
Note that a disk/sphere set $S$ may contain inessential disks or spheres, or essential disks whose boundaries are inessential in $\bdd M$.  Each of these are examples in which the disk or sphere could lie entirely in one of the compression bodies and so be disjoint from $T$.  In the classical setting, Theorem \ref{thm:main} has this immediate corollary:

 \begin{cor}[Strong Haken] \label{cor:main}
 Suppose $\bdd M$ contains no sphere components.  Suppose $S \subset M$ (resp $(S, \bdd S) \subset  (M, \bdd M))$ is a reducing sphere (resp $\bdd$-reducing disk) in $M$.  Then $S$ is isotopic to a reducing sphere (resp $\bdd$-reducing disk) for $T$.
 \end{cor}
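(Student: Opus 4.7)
The plan is to deduce Corollary \ref{cor:main} as a short consequence of Theorem \ref{thm:main} together with standard properties of classical compression bodies. Applying Theorem \ref{thm:main} to the connected disk or sphere $S$, I may isotope $T$ so that $S\cap T$ consists of at most one circle; what remains is to upgrade ``at most one circle'' to ``exactly one circle, essential in $T$.''

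The structural fact I would invoke is that when $\bdd M$ contains no $2$-sphere components, both $\bdd_-A$ and $\bdd_-B$ are unions of higher-genus closed surfaces, so $A$ and $B$ are compression bodies in the classical sense. Each then deformation retracts onto a spine consisting of $\bdd_-C$ together with a $1$-complex; in particular $A$ and $B$ are irreducible and $\bdd_-A$, $\bdd_-B$ are incompressible in their respective compression bodies.

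With this in hand I would run two short case analyses. First, rule out $S\cap T=\emptyset$: then $S$ lies entirely in one of $A,B$, say $C$. If $S$ is a reducing sphere, irreducibility of $C$ forces $S$ to bound a ball in $C\subset M$, contradicting essentiality. If $S$ is a $\bdd$-reducing disk with $\bdd S\subset \bdd_-B$, then necessarily $C=B$ (since $\bdd S$ cannot lie in $\bdd A$), so $S$ is a compressing disk for $\bdd_-B$ in $B$, contradicting incompressibility. Second, rule out $S\cap T$ being a single circle $c$ that is inessential in $T$: then $c$ bounds a disk $D\subset T$. In the sphere case, $c$ splits $S$ into two disks $D_A\subset A$ and $D_B\subset B$; by irreducibility, the spheres $D_A\cup D$ and $D_B\cup D$ bound balls in $A$ and $B$, whose union along $D$ is a ball bounded by $S$, again contradicting essentiality. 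In the disk case, $c$ splits $S$ into a disk in $A$ and an annulus $A_B\subset B$ whose two boundary circles are $c$ and $\bdd S$; the disk $A_B\cup D$, pushed slightly off $T$ into $B$, is a compressing disk for $\bdd_-B$ with the essential boundary $\bdd S$, which is again the same contradiction.

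Once both possibilities are excluded, $S\cap T$ must be a single essential circle of $T$, which is exactly the statement that $S$ is a reducing sphere (respectively $\bdd$-reducing disk) for $T$. Because the alignment in Theorem \ref{thm:main} is achieved by an ambient isotopy moving $T$, I would finally invert this isotopy to view it instead as an isotopy of $S$ to a reducing sphere (respectively $\bdd$-reducing disk) for the original $T$. I do not expect any real obstacle at this stage: all of the substantive difficulty has been absorbed into Theorem \ref{thm:main}, and the corollary is essentially routine bookkeeping once one has that theorem in hand together with the irreducibility and $\bdd_-$-incompressibility of classical compression bodies.
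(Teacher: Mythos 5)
Your proof is correct and follows the same approach the paper takes: the paper presents the corollary as an immediate consequence of Theorem \ref{thm:main}, remarking only that the absence of sphere boundary components forces a reducing sphere to meet $T$, and your argument supplies exactly the routine bookkeeping behind that remark — irreducibility of the (now aspherical) compression bodies rules out the case $S\cap T=\emptyset$ and, together with incompressibility of $\bdd_-B$, rules out a single inessential circle of intersection.
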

 
The assumption in Corollary \ref{cor:main} that there are no sphere components in $\bdd M$ puts us in the classical setting, where any reducing sphere $S$ for $M$ must intersect $T$.  

\section{Verticality in aspherical compression bodies} \label{sect:vertical}

We first briefly review  some classic facts and terminology for an aspherical compression body $C$, by which we mean that $\bdd_- C$ contains no sphere components.  Later, sphere components will add a small but interesting amount of complexity to this standard theory.  See \cite{Sc1} for a fuller account of the classical theory.  Unstated in that account (and others) is the following elementary observation, which further supports the use of the term ``aspherical":

\begin{prop} \label{prop:comirred} An aspherical compression body $C$ is irreducible.
\end{prop}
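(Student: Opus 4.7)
The plan is to induct on the number of $1$-handles used to build $C$ from $\bdd_- C \times I$.

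For the base case of zero $1$-handles, $C = \bdd_- C \times I$, and by connectedness $\bdd_- C$ is a single closed orientable surface of genus at least one. Since the universal cover of such a surface is $\real^2$ or the hyperbolic plane, $\pi_2(\bdd_- C \times I) = 0$; by the Sphere Theorem applied in this orientable $3$-manifold, every embedded $2$-sphere bounds a ball, so the base case is irreducible.

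For the inductive step, pick a $1$-handle $h$ of $C$ with cocore disk $D$. Cutting $C$ along $D$ yields an aspherical compression body (or disjoint union of such) $C_0$ with strictly fewer $1$-handles, irreducible by the inductive hypothesis. Given an embedded sphere $S \subset C$, I would isotope it to minimize $|S \cap D|$. If this minimum is zero, then $S$ lies inside $C_0$ and bounds a ball there, hence in $C$. Otherwise, the standard innermost-disk argument on $D$ finds a circle $c \subset S \cap D$ bounding a subdisk $D' \subset D$ whose interior is disjoint from $S$. This $c$ divides $S$ into two disks $D_1, D_2$, and after slight perturbation each sphere $D_i \cup D'$ intersects $D$ in strictly fewer circles than $S$.

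To turn this into a contradiction with minimality, I would run an inner induction on $|S \cap D|$: by the inner inductive hypothesis, $D_1 \cup D'$ bounds a ball $B_1$ in $C$, across which $D_1 \subset S$ can be isotoped to $D'$, producing a sphere isotopic to $S$ with strictly fewer intersection circles on $D$---contradicting minimality. Hence the minimum is zero, and $S$ bounds a ball in $C$.

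The main obstacle is organizing the two nested inductions---outer on the number of $1$-handles, inner on $|S \cap D|$---but both reductions strictly decrease the relevant counts, so the bookkeeping is routine. The essential content lies in the base case, where the Sphere Theorem and the asphericity of higher-genus surfaces do all the real work; the inductive step is a mechanical application of the standard innermost-disk technology one expects in arguments of this flavor.
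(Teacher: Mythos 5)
Your inductive scaffolding is reasonable and runs broadly parallel to the paper's argument (both reduce, via innermost disks on cocores of $1$-handles, to the collar $\bdd_- C \times I$), but the base case has a genuine gap. The Sphere Theorem runs in the opposite direction from what you need: it says $\pi_2(M) \neq 0$ forces an essential embedded sphere, not that $\pi_2(M) = 0$ forces every embedded sphere to bound a ball. To get irreducibility from $\pi_2 = 0$ you must still rule out spheres bounding a fake $3$-ball, and that is the Poincar\'e conjecture --- true, but wildly disproportionate here. The paper's base case is elementary and avoids this entirely: the collar $\bdd_- C \times I$ embeds in $\real^3$, and by Alexander/Schoenflies every (PL or smooth) $2$-sphere in $\real^3$ bounds a ball; since the unbounded complementary component of the sphere contains everything outside the image of the collar, the ball lies inside the collar.

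Two smaller points. First, cutting along a single cocore can disconnect $C$ and produce components with empty negative boundary (handlebodies or balls), so the outer induction should allow for those, though that is easily accommodated. Second, in the inner induction, once you know both $D_1\cup D'$ and $D_2\cup D'$ bound balls $B_1, B_2$, you must argue that at least one of $B_1, B_2$ is disjoint from the complementary half of $S$ before you can isotope; otherwise the ambient isotopy across the ball may drag the rest of $S$ along and fail to reduce $|S\cap D|$. That does hold (one checks that $D_2\subset B_1$ and $D_1\subset B_2$ together force $B_1 = B_2$, impossible), but it needs to be said. Structurally, the paper sidesteps the nested induction by taking $\Delta$ to be \emph{all} cocores at once and doing a single innermost-disk pass; your handle-at-a-time version is correct in outline but requires more bookkeeping for the same conclusion.
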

\begin{proof} Let $\Delta$ be the cocores of the $1$-handles used in the construction of $C$ from the collar $\bdd_- C \times I$.  If $C$ contained a reducing sphere $S$, that is a sphere that does not bound a ball, a standard innermost disk argument on $S \cap \Delta$ would show that there is a reducing sphere in the collar $\bdd_- C \times I$.  But since $C$ is assumed to be aspherical, $\bdd_-C$ contains no spheres, and it is classical that a collar of a closed orientable surface that is not a sphere is irreducible.  (For example, it's universal cover is a collar of $R^2$; the interior of this collar is $R^3$; and $R^3$ is known to be irreducible by the Schonfliess Theorem \cite{Sch}. )
\end{proof}

\begin{defin} \label{defin:comcoll} A properly embedded family $(\Delta, \bdd \Delta) \subset (C, \bdd C)$ of disks is a {\em complete collection of meridian disks} for $C$ if $C - \eta(\Delta)$ consists of a collar of $\bdd_- C$ and some $3$-balls.
\end{defin}

That there is such a family of disks follows from the definition of a compression body: take $\Delta$ to be the cocores of the $1$-handles used in the construction.  Given two complete collections $\Delta, \Delta'$ of meridian disks in an aspherical compression body, it is possible to make them disjoint by a sequence of $2$-handle slides, viewing the disks as cocores of $2$-handles.  (The slides are often more easily viewed dually, as slides of $1$-handles.) The argument in brief is this: if $\Delta$ and $\Delta'$ are two complete collections of meridians, an innermost disk argument (which relies on asphericity) can be used to remove all circles of intersection.  A disk cut off from $\Delta'$ by an outermost arc $\gamma$ of $\Delta' \cap \Delta$ in $\Delta'$ determines a way of sliding the $2$-handle in $\Delta$ containing $\gamma$ over some other members of $\Delta$ to eliminate $\gamma$ without creating more intersection arcs. Continue until all arcs are gone. (A bit more detail is contained in Phase 2 of the proof of Proposition \ref{prop:snugdisjoint}.)

Visually, one can think of the cores of the balls and $1$-handles as a properly embedded graph in $C$, with some valence $1$ vertices on $\bdd_- C$, so that the union $\Sss$ of the graph and $\bdd_- C$ has $C$ as its regular neighborhood.  $\Sss$ is called a {\em spine} of the compression body.  As already noted, a spine for $C$ is far from unique, but one can move from any spine to any other spine by sliding ends of edges in the graph over other edges, or over components of $\bdd_- C$, dual to the $2$-handle slides described above.  (See \cite{ST} or \cite{Sc1}.)  For most arguments it is sufficient and also simplifying to disregard any valence one vertex  that is not on $\bdd_-C$ and the ``cancelling" edge to which it is attached (but these do briefly appear in the proof of Corollary \ref{cor:stemswap}); to disregard all valence 2 vertices by amalgamating the incident edges into a single edge; and, via a slight perturbation, to require all vertices not on $\bdd_- C$ to be of valence 3.    We can, by edge slides, ensure that only a single edge of the spine is incident to each component of $\bdd_- C$; this choice of spine is also sometimes useful.

%, called {\em the stem} of $R$ that is incident to $R$.
The spine can be defined as above even when $\bdd_- C$ contains spheres.
Figure \ref{fig:snug} shows a schematic picture of a (non-aspherical) compression body, viewed first with its (aqua) two-handle structure and then its dual $1$-handle (spinal) structure.  $\bdd_- C$ is the union of a torus and 3 spheres; the genus two $\bdd_+ C$ appears in the spinal diagram only as an imagined boundary of a regular neighborhood of the spine.

    \begin{figure}[th]
    \centering
    \includegraphics[scale=0.6]{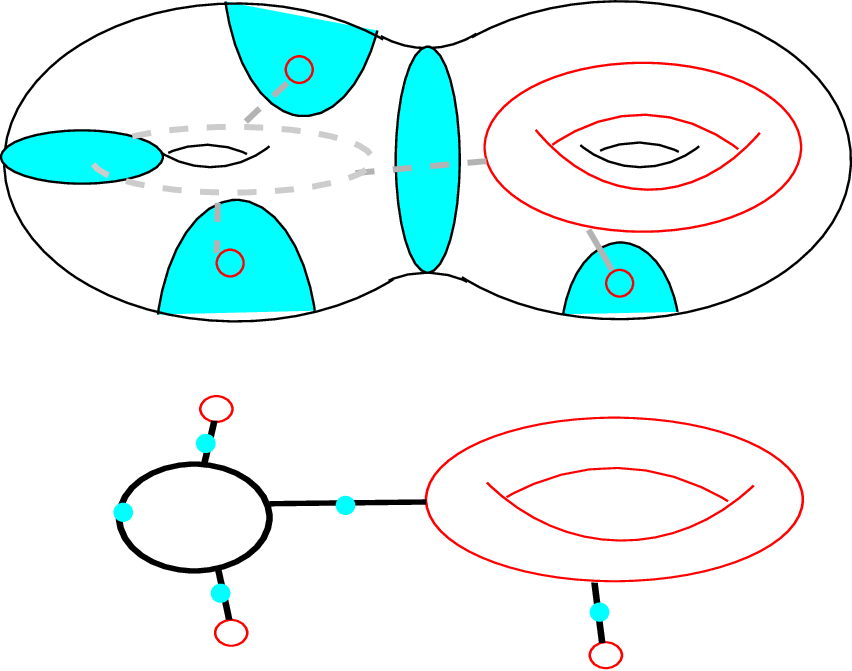}
\caption{2-handles and dual spine in a compression body.} 
 \label{fig:snug}
    \end{figure}
 
\begin{defin} \label{defin:vertical} A properly embedded arc $\alpha$ in a compression body $C$ is {\em spanning} if one end of $\alpha$ lies on each of $\bdd_- C$ and $\bdd_+ C$.  Similarly, a properly embedded annulus in $C$ is {\em spanning} if one end lies in each of $\bdd_- C$ and $\bdd_+ C$.  (Hence each spanning arc in a spanning annulus is also spanning in the compression body.)  

A disjoint collection of spanning arcs $\alpha$ in a compression body
is a {\em vertical family of arcs} if there is a complete collection $\Delta$ of meridian disks for $C$ so that 
\begin{itemize}
\item $\alpha \cap \Delta = \emptyset$ and
\item for $N$, the components of $C - \Delta$ that are a collar of $\bdd_- C$, there is a homeomorphism $h: \bdd_- C \times (I, \{0\}) \to (N, \bdd_- C)$ so that $h(\mathfrak{p} \times I) = \alpha$, where $\mathfrak{p}$ is a collection of points in $\bdd_-C$.
\end{itemize}  
\end{defin}

A word of caution: we will show in Proposition \ref{prop:vertunique} that any two vertical arcs with endpoints on the same component $F \subset \bdd_-C$ are properly isotopic in $C$.  This is obvious if the two constitute a vertical family.   If they are each vertical, but not as a vertical family, proof is required because the collection of meridian disks referred to in Definition \ref{defin:vertical} may differ for the two arcs.  

There is a relatively simple but quite useful way of characterizing a vertical family of arcs.   To that end, let $\alpha$ be a family of spanning arcs in $C$ and  $\hat{p} = \alpha \cap \bdd_- C$ be their end points in $\bdd_- C$.   An embedded family $c$ of simple closed curves in $\bdd_-C$ is a {\em circle family associated to $\alpha$} if 
$\hat{p} \subset c$. 

\begin{lemma} \label{lemma:annulus} Suppose $\alpha$ is a family of spanning arcs in an aspherical compression body $C$.
\begin{itemize}
\item Suppose $\alpha$ is vertical and $c$ is an associated circle family.  Then there is a family $\calA$ of disjoint spanning annuli in $C$ so that $\calA$ contains $\alpha$ and $\calA \cap \bdd_-C = c.$
\item Suppose, on the other hand, there is a collection $\calA$ of disjoint spanning annuli in $C$ that contains $\alpha$. Suppose further that in the family of circles $\calA \cap \bdd_-C$ associated to $\alpha$, each circle is essential in $\bdd_- C$. 
Then $\alpha$ is a vertical family.  
\end{itemize}
\end{lemma}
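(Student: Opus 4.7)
\emph{Vertical implies associated annuli (first bullet).} I would unpack Definition \ref{defin:vertical}: there is a complete meridian collection $\Delta$ with $\alpha\cap\Delta=\emptyset$ and a homeomorphism $h:F\times I\to N$ (where $N$ is the collar component of $C-\Delta$) with $h(F\times\{0\})=\bdd_-C$ and $h$ carrying the fibers over $\hat p$ to $\alpha$. Identifying $F$ with $\bdd_-C$ so that $c\subset F$ contains $\hat p$, set $\calA:=h(c\times I)$. Since $c$ is an embedded family of circles, $\calA$ is a disjoint family of annuli; each component is spanning because its bottom circle lies in $\bdd_-C$ and its top circle lies in $h(F\times\{1\})\subset\bdd_+C$; it contains $\alpha$ because $\hat p\subset c$; and $\calA\cap\bdd_-C=c$ by construction.

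\emph{Associated annuli implies vertical (second bullet).} My plan is two-stage: first produce a complete meridian collection $\Delta$ disjoint from $\calA$, then adjust the product structure on the resulting collar component $N$ so that $\alpha$ consists of vertical fibers. For stage one I would start with any complete $\Delta_0$, put it in general position with $\calA$, and induct on $|\Delta_0\cap\calA|$. Closed curves of intersection are removed first: an innermost such curve bounds a disk $D'\subset\Delta_0$; if $\bdd D'$ is inessential in $\calA$ the usual disk swap applies (via irreducibility of $C$, Proposition \ref{prop:comirred}), while if $\bdd D'$ is essential in its annulus component then capping the sub-annulus of $\calA$ adjacent to $\bdd_-C$ with $D'$ yields an essential disk with boundary on $\bdd_-C$, contradicting the incompressibility of $\bdd_-C$ (a parallel innermost-disk consequence of Proposition \ref{prop:comirred}). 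Once only arcs of intersection remain, an outermost arc $\gamma$ on some component $\Delta_{0,i}$ cuts off a disk $E\subset\Delta_{0,i}$ with $\bdd E=\gamma\cup\delta$ and $\delta\subset\bdd_+C$. Since both endpoints of $\gamma$ lie on $\bdd_+C$, the arc $\gamma$ cuts off a sub-disk $D_\gamma$ from its annulus component, and $E\cup D_\gamma$ (pushed slightly off $\calA$) serves as a sliding disk for a $2$-handle slide of $\Delta_{0,i}$ over the rest of $\Delta_0$ that strictly reduces $|\Delta_0\cap\calA|$.

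\emph{Stage two and main obstacle.} After stage one, $\calA$ lies entirely in the collar component $N\cong F\times I$ of $C-\eta(\Delta)$, since every component of $\calA$ meets $\bdd_-C$. The components of $\calA$ are incompressible spanning annuli in this product with essential boundary on $F\times\{0\}$; classical product-structure results then yield an ambient isotopy of $N$, fixed on $\bdd_-C$, after which $\calA$ coincides with the vertical annuli $c\times I$. Each arc of $\alpha$ is then a spanning arc in some vertical annulus, which I would further isotope (within that annulus) to the vertical fiber over its bottom endpoint, realized by an ambient isotopy supported near $\calA$. The resulting product structure on $N$, together with $\Delta$, exhibits $\alpha$ as a vertical family. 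I expect the main obstacle to be the handle-slide step of stage one, verifying that the slide dictated by $E\cup D_\gamma$ can indeed be carried out monotonically in $|\Delta_0\cap\calA|$; this step relies essentially on asphericity (Proposition \ref{prop:comirred}) and on the incompressibility of $\bdd_-C$.
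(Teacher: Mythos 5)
Your overall plan matches the paper's: the first bullet by unpacking Definition \ref{defin:vertical}, and the second by first modifying a complete meridian collection $\Delta$ to be disjoint from $\calA$ and then reading off verticality inside the resulting collar. You also use, as the paper does, the essentiality of $\calA\cap\bdd_-C$ (via incompressibility of $\bdd_-C$ / injectivity of $\pi_1(\bdd_-C)\to\pi_1(C)$) to rule out essential circles of $\Delta\cap\calA$. The first bullet and the circle-elimination step are fine.

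The gap is exactly where you expect it, the outermost-arc step, and it is a direction error. You take $\gamma$ outermost on $\Delta_{0,i}$, so the disk $E\subset\Delta_{0,i}$ it cuts off is disjoint from $\calA$. But then the disk $D_\gamma$ that $\gamma$ cuts off from its annulus is not outermost in $\calA$, and in particular may meet other components of $\Delta_0$. Consequently $E\cup D_\gamma$ does not lie in a single component of $C-\eta(\Delta_0)$, the $\bdd$-parallelism needed to license the slide is unavailable, and if one forces the replacement of $E$ by a push-off of $D_\gamma$ the resulting collection is no longer embedded (with no control that a subsequent fix-up keeps $|\Delta_0\cap\calA|$ down). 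The standard fix, and what the paper invokes via Phase 2 of Proposition \ref{prop:snugdisjoint}, is to take $\gamma$ outermost on $\calA$, the surface \emph{not} being altered: then $D_\gamma\subset\calA$ is disjoint from $\Delta_0$, so $E\cup D_\gamma$ has interior disjoint from $\Delta_0$, lies in a single complementary piece (a collar of $\bdd_-C$ or a ball, $\bdd$-irreducible by Proposition \ref{prop:comirred}), and the disk it is parallel to realizes a handle slide of $\Delta_{0,i}$ over the rest of $\Delta_0$ that strictly reduces $|\Delta_0\cap\calA|$. The paper also includes a small preliminary finesse you omit, namely isotoping $\alpha$ inside each annulus of $\calA$ so that $\Delta\cap\alpha=\emptyset$ before attacking $\Delta\cap\calA$; this uses the fact that the surviving arcs and circles of $\Delta\cap\calA$ are all inessential in $\calA$. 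It is not logically indispensable for the conclusion (once $\Delta\cap\calA=\emptyset$, disjointness from $\alpha$ is automatic), but it clarifies the picture. With the outermost-arc direction corrected, your stage two (product structure on the collar, ambient isotopy carrying $\calA$ to $c\times I$ and $\alpha$ to vertical fibers) is as in the paper.
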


    \begin{figure}[th]
     \labellist
   \small\hair 2pt
%\pinlabel  $q$ at 220 40
\pinlabel  $p_A$ at 15 235
%\pinlabel  $r$ at 170 90
\pinlabel  $\aaa \cap A$ at 120 190
\pinlabel  $\aaa'$ at 70 245
\pinlabel  $\Delta \cap A$ at 80 80
\pinlabel  $\Delta \cap A$ at 270 120
\endlabellist
    \centering
    \includegraphics[scale=0.6]{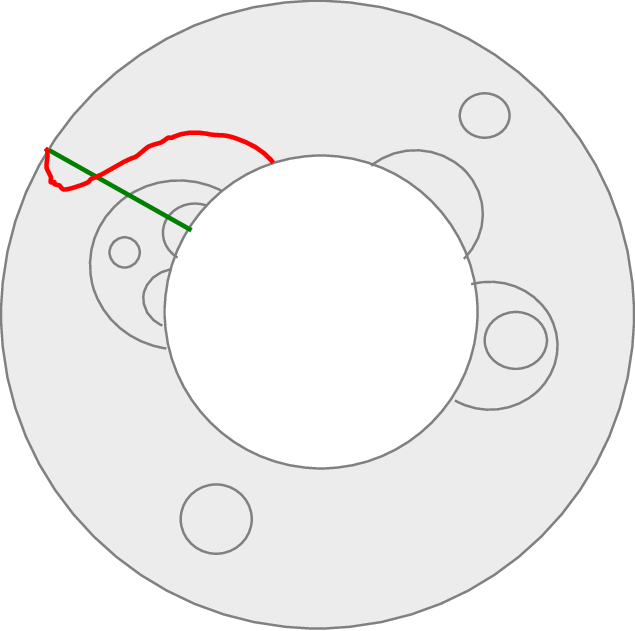}
\caption{$\alpha'$ avoids $\Delta \cap A$} 
 \label{fig:annulus}
    \end{figure}

\begin{proof} One direction is clear: Suppose  $\alpha$ is a vertical family and $h: \bdd_-C \times (I, \{0\}) \to (N, \bdd_- C)$ is the homeomorphism from Definition \ref{defin:vertical}.  Then $h(c \times I)$ is the required family of spanning annuli.  (After the technical adjustment, from general position, of moving the circles $h(c \times \{1\})$ off the disks in $h(\bdd_-C \times \{1\})$ coming from the family $\Delta$ of meridian disks for $C$.)

For the second claim, 
let $\Delta$ be any complete collection of meridians for $C$ and consider the collection of curves $\Delta \cap \calA$.  If $\Delta \cap \calA = \emptyset$ then $\calA$ is a family of incompressible spanning annuli in the collar $\bdd_- C \times I$ and, by standard arguments, any family of incompressible spanning annuli in a collar is vertical.  Furthermore, any family of spanning arcs in a vertical annulus can visibly be isotoped rel one end of the annulus to be a family of vertical arcs.  So we are left with the case $\Delta \cap \calA \neq \emptyset$. 

Suppose $\Delta \cap \calA$ contains a simple closed curve, necessarily inessential in $\Delta$.  If that curve were essential in a component $A \in \calA$, then the end $A \cap \bdd_-C \subset c$ would be null-homotopic in $C$.  Since the hypothesis is that each such circle is essential in $\bdd_- C$, this would contradict the injectivity of $\pi_1(\bdd_- C) \to \pi_1(C)$. 

We conclude that each component of $\Delta \cap \calA$ is either an inessential circle in $\calA$ or an arc in $\calA$ with both ends on $\bdd_+ C$, since $\bdd \Delta \subset \bdd_+C$. Such arcs are inessential in $\calA$.   

Consider what this means in a component $A \in \calA$; let $c_A = A \cap \bdd_-C \in c$ be the end of $A$ in $\bdd_-C$.  It is easy to find spanning arcs $\alpha'$ in $A$ with ends at the points $p_A = \hat{p} \cap c_A$, chosen so that $\alpha'$ avoids all components of $\Delta \cap A$. See Figure \ref{fig:annulus}. But, as spanning arcs, $\alpha \cap A$ and $\alpha'$ are isotopic in $A$ rel $c_A$ (or, if one prefers, one can picture this as an isotopy near $A$ that moves the curves $\Delta \cap A$ off of $\alpha \cap A$).    After such an isotopy in each annulus, $\Delta$ and  $\alpha$ are disjoint.  Now apply classic innermost disk, outermost arc arguments to alter $\Delta$ until it becomes a complete collection of meridians disjoint from $\calA$, the case we have already considered.  More details of this classic argument appear in Phase 2 of the proof of Proposition \ref{prop:snugdisjoint}.
\end{proof}

Lemma \ref{lemma:annulus} suggests the following definition.  

\begin{defin} \label{defin:vertann} Suppose $\calA $ is a family of disjoint spanning annuli in $C$ and $\alpha$ is a collection of disjoint spanning arcs in $\calA$, with at least one arc  of $\alpha$ in each annulus of $\calA$.  $\calA$ is a {\em vertical family of annuli} if and only if $\alpha$ is a vertical family of arcs.  
\end{defin}

Note that for $\calA$ to be vertical we do not require that $\calA$ be incompressible in $C$.   This adds some complexity to our later arguments, particularly the proof of Proposition \ref{prop:vertdodge}.  

\begin{prop} \label{prop:annulusdisjoint} Suppose $\calA$ is a vertical family of annuli in an aspherical compression body $C$.  Then there is a complete collection of meridian disks for $C$ that is disjoint from $\calA$.
\end{prop}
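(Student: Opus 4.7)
The plan is to begin with the complete meridian collection $\Delta$ provided by the verticality of an associated family of spanning arcs $\alpha \subset \calA$ (Definitions \ref{defin:vertical} and \ref{defin:vertann}), so that $\Delta \cap \alpha = \emptyset$, and to modify $\Delta$ by a sequence of $2$-handle slides and isotopies to make it disjoint from all of $\calA$ while keeping it a complete meridian collection. The preliminary observation is that since $\bdd \Delta \subset \bdd_+C$, every arc component of $\Delta \cap \calA$ has both endpoints on $\bdd_+A$ for its annulus $A$, hence is inessential in $A$.

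\emph{Phase 1 (removing arcs).} For each $A \in \calA$, cut $A$ along its spanning arcs $\alpha \cap A$ to obtain a disk (or disks) $A^\#$; the arcs of $\Delta \cap A$ remain inessential in $A^\#$. Choose an outermost such arc $\gamma$ in $A^\#$, cutting off a half-disk $E' \subset A$ with $\bdd E' = \gamma \cup \beta$, $\beta \subset \bdd_+A$, and $\inter E' \cap (\Delta \cup \alpha) = \emptyset$. Following the outline of Phase 2 of the proof of Proposition \ref{prop:snugdisjoint}, use $E'$ as a guide to slide $\gamma \subset \Delta_0$ across $E'$ to $\beta$ and then slightly off $\bdd_+C$. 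The support is a small neighborhood of $E'$ disjoint from $\alpha$ and from the other annuli, so this reduces $|\Delta \cap \calA|$ by one without disturbing $\alpha$. Iterate until only circle components remain.

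\emph{Phase 2 (removing circles).} Choose $\gamma$ innermost on $\Delta$, bounding $E \subset \Delta$ with $\inter E \cap \calA = \emptyset$, and let $A \in \calA$ contain $\gamma$. If $\gamma$ is inessential in $A$, it bounds a subdisk $E'' \subset A$; the sphere $E \cup E''$ bounds a ball by Proposition \ref{prop:comirred}, and an isotopy of $E$ across this ball and off $A$ removes $\gamma$. If $\gamma$ is essential in $A$, it is parallel in $A$ to $c_A = A \cap \bdd_-C$, and $E$ shows $c_A$ to be null-homotopic in $C$; the $\pi_1$-injectivity of $\bdd_-C \hookrightarrow C$ used in the proof of Lemma \ref{lemma:annulus} then forces $c_A$ to bound a disk $D_A \subset \bdd_-C$. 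With $A_\gamma \subset A$ the sub-annulus between $\gamma$ and $c_A$, the sphere $E \cup A_\gamma \cup D_A$ bounds a ball by Proposition \ref{prop:comirred}, and an isotopy of $E$ across this ball to a disk parallel to $D_A$ pushed slightly into the collar component $N$ removes $\gamma$.

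Each step strictly decreases $|\Delta \cap \calA|$, and all moves are ambient isotopies or $2$-handle slides preserving completeness, so after finitely many iterations $\Delta$ is disjoint from $\calA$. The main technical obstacle is the essential subcase of Phase 2, which simultaneously invokes the $\pi_1$-injectivity of $\bdd_-C$ (to produce $D_A$) and the irreducibility of $C$ (to produce the ball), and requires care in choosing $\gamma$ and $D_A$ innermost so that pushing $E$ into the collar does not introduce new intersections with other annuli of $\calA$ whose feet $c_{A'}$ might lie inside $D_A$ on $\bdd_-C$.
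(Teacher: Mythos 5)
Your overall strategy is the same as the paper's: start from the complete collection $\Delta$ disjoint from the vertical arc family $\alpha$, and remove the remaining intersections with $\calA$ by an innermost-circle / outermost-arc argument. Two corrections are needed, though.

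First, the ``essential'' subcase of your Phase~2 --- which you single out as the main technical obstacle --- is vacuous, and recognizing this is exactly what makes the paper's proof a two-line remark. Throughout the process $\Delta$ remains disjoint from $\alpha$, and every essential circle in an annulus $A\in\calA$ meets every spanning arc of $A$, in particular $\alpha\cap A$. So no circle of $\Delta\cap A$ can be essential in $A$. (This is the content of the paper's aside that $\Delta$ meets $\calA$ ``only in inessential circles, and arcs with both ends incident to the end of $\bdd A$ at $\bdd_+C$.'') As a result you never need to produce $D_A\subset\bdd_-C$, never form the sphere $E\cup A_\gamma\cup D_A$, and never have to worry about other feet $c_{A'}$ lying inside $D_A$; all of that machinery can be deleted.

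Second, your phase order is reversed. In Phase~1 you pick an arc $\gamma$ outermost in $A^\#$ and assert $\inter E'\cap\Delta=\emptyset$, but outermost among \emph{arcs} does not preclude circles of $\Delta\cap A$ lying inside $E'$, so the claim is not automatic. The standard remedy (and the one used in Phase~2 of the paper's proof of Proposition~\ref{prop:snugdisjoint}) is to eliminate circles first by the innermost-disk move --- which here is only the inessential case and uses Proposition~\ref{prop:comirred} --- and only then run the outermost-arc argument, at which point the cut-off half-disk really is clear of $\Delta$. Alternatively, choose at each step a component of $\Delta\cap A^\#$, arc or circle, that is innermost in $A^\#$. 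With these two adjustments your argument is correct and is essentially the paper's.
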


\begin{proof} Let $\alpha \subset \calA$ be a vertical family of spanning arcs as given in Definition \ref{defin:vertann}.  Since $\alpha$ is a vertical family of arcs, there is a complete collection $\Delta$ of meridian disks for $C$ that is disjoint from $\alpha$, so $\Delta$ intersects $\calA$ only in inessential circles, and arcs with both ends incident to the end of $\bdd A$ at $\bdd_+ C$.  As noted in the proof of Lemma \ref{lemma:annulus}, a standard innermost disk, outermost arc argument can be used to alter $\Delta$ to be disjoint from $\calA$.  
\end{proof}

\begin{cor} \label{cor:annulidiskdis} Suppose $(\calD, \bdd \calD) \subset (C, \bdd_+C)$ is an embedded  family of disks that is disjoint from an embedded family of vertical annuli $\calA$ in an aspherical compression body $C$.
Then there is a complete collection of meridian disks for $C$ that is disjoint from $\calA \cup \calD$.
\end{cor}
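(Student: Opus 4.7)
The plan is to bootstrap from Proposition \ref{prop:annulusdisjoint}: first produce a complete collection of meridian disks disjoint from $\calA$, and then modify this collection to also avoid $\calD$ by the standard innermost-disk/outermost-arc machinery, taking care that every modification is supported in a small neighborhood of $\calD$ and therefore does not affect $\calA$.

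More concretely, I would begin by invoking Proposition \ref{prop:annulusdisjoint} to obtain a complete collection $\Delta$ of meridian disks for $C$ with $\Delta \cap \calA = \emptyset$. After a small isotopy to achieve general position, $\Delta$ meets $\calD$ in a finite collection of simple closed curves together with arcs having both endpoints on $\bdd_+ C$ (since $\bdd \Delta, \bdd \calD \subset \bdd_+ C$). Because $\calD$ and $\calA$ are disjoint embedded surfaces in the compact manifold $C$, we may fix once and for all an open regular neighborhood $N(\calD)$ that is disjoint from $\calA$, and insist that all subsequent modifications of $\Delta$ take place inside $N(\calD)$.

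I would then eliminate circles of intersection by the usual innermost-disk argument, which is available because the aspherical compression body $C$ is irreducible by Proposition \ref{prop:comirred}: an innermost circle of $\Delta \cap \calD$ on $\calD$ bounds a subdisk $E \subset \calD$ with interior disjoint from $\Delta$; capping this off with the disk it cuts off from $\Delta$ yields a sphere bounding a $3$-ball, across which $\Delta$ can be isotoped to remove the circle, and the isotopy can be carried out inside $N(\calD)$. Once $\Delta \cap \calD$ consists only of arcs, an outermost arc $\gamma$ on $\calD$ cuts off a disk $E \subset \calD$ with $E \cap \Delta = \gamma$ and $\bdd E - \gamma \subset \bdd_+ C$. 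Exactly as in the discussion following Definition \ref{defin:comcoll} (and as used in Phase 2 of the proof of Proposition \ref{prop:snugdisjoint}), $E$ prescribes a slide of the $2$-handle of $C$ dual to the component of $\Delta$ containing $\gamma$ over other members of $\Delta$, producing a new complete collection whose intersection with $\calD$ has strictly fewer arcs. This slide too can be performed inside $N(\calD)$, so disjointness from $\calA$ is preserved.

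There is no serious obstacle here; the only point that needs verification is that each step of the standard argument can be localized near $\calD$, and this is immediate from the hypothesis $\calA \cap \calD = \emptyset$. After finitely many such innermost-disk isotopies and outermost-arc slides, we arrive at a complete collection of meridian disks for $C$ disjoint from $\calA \cup \calD$, as required.
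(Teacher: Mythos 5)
The gap is in your repeated claim that the moves can be ``carried out inside $N(\calD)$''; this is false as stated, and it is precisely the point the paper is careful about. For the innermost-circle step, the circle $c$ bounds $E \subset \calD$ and also a disk $E'$ in some component of $\Delta$, and the ball $B$ bounded by $E \cup E'$ contains $E' \subset \Delta$, which is nowhere near $\calD$ in general; the isotopy across $B$ is therefore not supported in any prescribed neighborhood of $\calD$. This particular step can be rescued, but by a different argument than the one you give: since $\calA$ is properly embedded with $\bdd \calA \subset \bdd C$ while $E \cup E'$ lies in $\mathrm{int}(C)$ and is disjoint from $\calA$, no component of $\calA$ can lie inside the ball $B$, so $B \cap \calA = \emptyset$ and the isotopy across $B$ does not create intersections with $\calA$. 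That is a genuine argument, but it is not ``the isotopy is inside $N(\calD)$.''

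The outermost-arc step is where the proposal really breaks. As the paper spells out in Phase~2 of the proof of Proposition~\ref{prop:snugdisjoint}, the move is not a local surgery but a $2$-handle slide: the disk $E \cup E'$ is $\bdd$-parallel into the ``far'' end of the collar component of $C - \eta(\Delta)$, and carrying out the slide may push a piece of $\Delta$ \emph{over other disks of} $\Delta$, through that parallelism region. Those other disks and that parallelism region are not in $N(\calD)$; moreover the collar component of $C - \eta(\Delta)$ is exactly where the spanning annuli $\calA$ live, so it is not a priori clear that the slide avoids $\calA$. (Note also that the naive cut-and-paste $D \mapsto D_i \cup (\text{push-off of } E')$ need not yield a complete collection, which is why the paper insists on the handle slide rather than a local surgery.) So the sentence ``each step of the standard argument can be localized near $\calD$, and this is immediate from the hypothesis $\calA \cap \calD = \emptyset$'' asserts something that is not true, and the argument as written does not close. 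The paper avoids the problem entirely by a different bookkeeping: it starts only from a $\Delta$ disjoint from the vertical arcs $\alpha \subset \calA$, and then runs the innermost-disk/outermost-arc induction on the full intersection $\Delta \cap (\calA \cup \calD)$ at once, so that each move strictly decreases $|\Delta \cap (\calA \cup \calD)|$ and no separate ``preserve disjointness from $\calA$'' claim is needed.
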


\begin{proof} Propositions \ref{prop:annulusdisjoint} %and \ref{prop:annulidis} 
shows that there is a complete collection disjoint from $\calA$.  But the same proof (which exploits asphericity through its use of Lemma \ref{lemma:annulus}) works here, if we augment the curves $\Delta \cap \calA$ with also the circles $\Delta \cap \calD$.
\end{proof}

\begin{prop} \label{prop:vertunique} Suppose $F$ is a component of $\bdd_- C$ and $\alpha, \beta$ are vertical arcs  in $C$ with endpoints $p, q \in F$.
%, where $F$ is not a sphere.  
Then $\alpha$ and $\beta$ are properly isotopic in $C$.  
\end{prop}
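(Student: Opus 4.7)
The plan is to reduce to the situation where a single complete collection $\Delta$ of meridian disks is simultaneously disjoint from both $\alpha$ and $\beta$. Once this is achieved, both arcs lie in the collar component $N \cong F \times I$ of $C - \Delta$, in which $\alpha$ appears as $\{p\} \times I$, and standard product arguments in $F \times I$ deliver a proper isotopy from $\alpha$ to $\beta$.

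Start with a complete collection $\Delta_\alpha$ witnessing that $\alpha$ is vertical. Since $C$ is aspherical, the component $F$ is not a sphere, so we may choose an essential simple closed curve $c_\beta \subset F$ passing through $q$. Applying the first direction of Lemma \ref{lemma:annulus} to $\{\beta\}$ with associated circle family $\{c_\beta\}$ yields a spanning annulus $A_\beta$ containing $\beta$ with $A_\beta \cap \bdd_- C = c_\beta$. We now analyze $\Delta_\alpha \cap A_\beta$ by a standard innermost-disk, outermost-arc argument. Every circle of intersection is inessential in $\Delta_\alpha$; if such a circle were essential in $A_\beta$ it would be parallel to $c_\beta$, forcing $c_\beta$ to be null-homotopic in $C$ via the disk it bounds in $\Delta_\alpha$, contradicting essentiality of $c_\beta$ in $F$ through injectivity of $\pi_1(F)\hookrightarrow \pi_1(C)$. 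Hence all such circles are inessential in both surfaces and, via irreducibility of $C$ (Proposition \ref{prop:comirred}), can be removed by isotopy of $A_\beta$ (dragging $\beta$ along). Each remaining arc of intersection has both endpoints on the $\bdd_+ C$ side of $\bdd A_\beta$, since $\bdd \Delta_\alpha \subset \bdd_+ C$, and is therefore inessential in $A_\beta$; its outermost instance cuts off a disk in $A_\beta$ disjoint from $c_\beta$, across which $\beta$ may be properly isotoped within $A_\beta$. Iterating yields a proper isotopy taking $\beta$ to an arc disjoint from $\Delta_\alpha$, with its endpoint $q$ possibly sliding along $c_\beta$ to a new point $q'$ on $F$.

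With $\Delta_\alpha$ now disjoint from both $\alpha$ and the repositioned $\beta$, the arc $\beta$ must lie in the collar component $N_\alpha \cong F \times I$ of $C - \Delta_\alpha$, in which $\alpha = \{p\} \times I$. A standard general-position argument shows that any spanning arc in $F \times I$ is properly isotopic to a vertical arc, so we may further isotope $\beta$ to $\{q'\} \times I$. Finally, a path from $p$ to $q'$ in $F$ induces a proper isotopy sweeping $\{p\} \times I$ across $N_\alpha$ to $\{q'\} \times I$, giving the desired proper isotopy from $\alpha$ to $\beta$ in $C$.

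The main obstacle is that the vertical structure is given with respect to $\alpha$ only, so $\beta$ enters $\Delta_\alpha$ in a generally uncontrolled way. The key tool is Lemma \ref{lemma:annulus}, which lets us enclose $\beta$ in an annulus whose $\bdd_-$-curve can be chosen essential; essentiality is precisely what rules out essential intersection circles in $\Delta_\alpha \cap A_\beta$, and this is exactly where the asphericity of $C$ enters. Once the intersections are cleared the remainder is bookkeeping inside a product collar.
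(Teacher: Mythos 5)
Your argument is correct in outline but takes a genuinely different route from the paper, and there is one step that is weaker than your phrasing suggests. The paper's proof is symmetric: it chooses $c_\alpha$ through $p$ and $c_\beta$ through $q$ meeting in exactly one point, builds both annuli $A_\alpha$ and $A_\beta$ via Lemma~\ref{lemma:annulus}, observes that $A_\alpha \cap A_\beta$ contains exactly one spanning arc $\gamma$ (because $c_\alpha \cap c_\beta$ is a single point and $\partial\Delta$ considerations kill other arcs near $F$), and then isotopes $\alpha \to \gamma$ inside the annulus $A_\alpha$ and $\gamma \to \beta$ inside $A_\beta$. Since any two spanning arcs of an annulus are properly isotopic, those last two isotopies are entirely two-dimensional and hence elementary. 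Your version instead fixes $\Delta_\alpha$ witnessing verticality of $\alpha$, builds only $A_\beta$, clears $\beta$ (via $A_\beta$) off $\Delta_\alpha$, and finishes inside the collar $N_\alpha \cong F \times I$.

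The weak point is the claim that ``a standard general-position argument shows that any spanning arc in $F \times I$ is properly isotopic to a vertical arc.'' This statement is true, but it is a genuine unknotting theorem for arcs in an $I$-bundle over a closed surface, not a consequence of general position: critical-point cancellation for the height function on $\beta$ can be obstructed, and the usual proof routes through the classification of essential annuli in $I$-bundles (Waldhausen) applied to the frontier of $\eta(\beta \cup F\times\partial I)$, or through an equivariant straightening in the universal cover. Your argument would be airtight, and would match the paper's own toolkit, if you cleared the whole annulus $A_\beta$ (not just the arc $\beta$) off $\Delta_\alpha$ using the same innermost-circle/outermost-arc scheme. Then $A_\beta$ lies in the collar $N_\alpha$ as an incompressible spanning annulus (incompressibility comes from $c_\beta$ being essential), hence $A_\beta$ is vertical by the standard fact quoted in Lemma~\ref{lemma:annulus}, and $\beta$ is a spanning arc of a vertical annulus, which is ``visibly'' isotopic to a vertical arc. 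That final move is one dimension lower and really is elementary, which is precisely the reason the paper works inside $A_\alpha$ and $A_\beta$ rather than inside the collar.
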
 

Notice that the proposition does not claim that $\alpha$ and $ \beta$ are parallel, so in particular they do not necessarily constitute a vertical family.  Indeed the isotopy from $\alpha$ to $\beta$ that we will describe may involve crossings between $\alpha$ and $\beta$.

    \begin{figure}[th]
     \labellist
   \small\hair 2pt
\pinlabel  $q$ at 270 50
\pinlabel  $\bbb$ at 290 65
\pinlabel  $\aaa$ at 35 155
\pinlabel  $\gamma$ at 95 180
\pinlabel  $\gamma$ at 380 180
\pinlabel  $p$ at 10 160
\pinlabel  $c_\alpha$ at 70 210
\pinlabel  $c_\beta$ at 340 210
\pinlabel  $A_\alpha$ at 40 70
\pinlabel  $A_\beta$ at 300 140
\pinlabel  $A_\alpha \cap A_\beta$ at 180 80
\pinlabel  $A_\alpha \cap A_\beta$ at 450 100
\endlabellist
    \centering
    \includegraphics[scale=0.7]{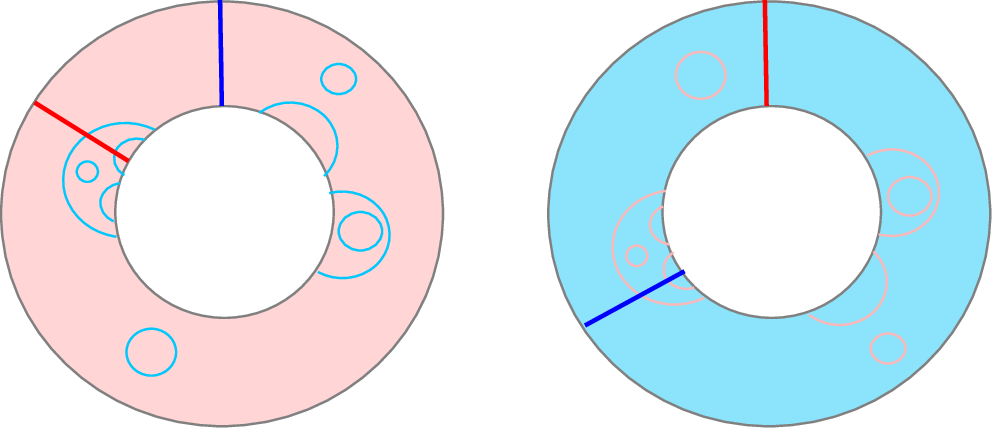}
\caption{Arcs $\alpha$ and $\beta$ both properly isotopic to $\gamma$} 
 \label{fig:vertunique}
    \end{figure}

\begin{proof}  Since $C$ is aspherical, $genus(F) \geq 1$ and there are simple closed curves $c_{\aaa}, c_{\bbb} \subset F$ so that
\begin{itemize}
\item $p \in c_{\aaa}, q \in c_{\bbb}$
\item $c_{\aaa}$ and $c_{\bbb}$ intersect in a single point.
\end{itemize}

Since $\aaa$ and $\bbb$ are each vertical, it follows from Lemma \ref{lemma:annulus} that there are spanning annuli $A_{\aaa}, A_{\bbb}$ in $C$ that contain $\aaa$ and $\bbb$ respectively and whose ends on $F$ are $c_{\aaa}$ and $c_{\bbb}$ respectively.  Since  $c_{\aaa}$ and $c_{\bbb}$ intersect in a single point, this means that  among the curves in $A_{\aaa} \cap A_{\bbb}$ there is a single arc $\gamma$ that spans each annulus, and no other arcs are incident to $F$.  The annulus $A_{\aaa}$ then provides a proper isotopy from the spanning arc $\aaa$ to $\gamma$ and the annulus $A_{\bbb}$ provides a proper isotopy from $\gamma$ to $\bbb$.  Hence $\alpha$ and $\beta$ are properly isotopic in $C$. See Figure \ref{fig:vertunique}.  
\end{proof}

\bigskip

We now embark on a technical lemma that uses these ideas, a lemma that we will need later.   Begin with a closed connected surface $F$ that is not a sphere, and say that circles $\alpha, \beta$ {\em essentially intersect} if they are not isotopic to disjoint circles and have been isotoped so that $|\alpha \cap \beta|$ is minimized.  Suppose $\hat{a} \subset F$ is an embedded family of simple closed curves, not necessarily essential, and $p_1, p_2$ is a pair of points disjoint from $\hat{a}$.  (We only will need the case of two points; the argument below extends to any finite number, with some loss of clarity in statement and proof.)

Let $b' \subset F$ be a non-separating simple closed curve in $F$ that is not parallel to any $a \in \hat{a}$.  For example, if all curves in $\hat{a}$ are separating, $b'$ could be any non-separating curve; if some curve $a \in \hat{a}$ is non-separating, take $b'$ to be a circle that intersects $a$ once.  Isotope $b'$ in $F$ so that it contains $p_1, p_2$, and intersects $\hat{a}$ transversally if at all; call the result $b \subset F$.  (Note that, following these requirements, $\hat{a}$ may not intersect $b$ essentially, for example if an innermost disk in $F$ cut off by an inessential $a \in \hat{a}$ contains $p_i$.) If $b$ intersects $\hat{a}$, let $q_i$ be points in $b \cap \hat{a}$ so that the subintervals $\sss_i \subset b$ between $p_i$ and $q_i$ have interiors disjoint from $\hat{a}$ and are also disjoint from each other.  Informally, we could say that $q_i$ is the closest point in $\hat{a}$ to $p_i$ along $b$, and $\sigma_i$ is the path in $b$ between $p_i$ and $q_i$.  

Since $b$ is non-separating there is a simple closed curve $x \subset F$ that intersects $b$ exactly twice, with the same orientation (so the intersection is essential).  Isotope $x$ along $b$ until the two points of intersection are exactly $q_1, q_2$.  See Figure \ref{fig:vertdodge1}.

    \begin{figure}[th]
     \labellist
   \small\hair 2pt
\pinlabel  $a$ at 50 140
\pinlabel  $a$ at 45 70
\pinlabel  $\sss$ at 70 63
\pinlabel  $x$ at 85 40
\pinlabel  $b$ at 100 70
\pinlabel  $q$ at 85 55
\pinlabel  $p$ at 60 65
\endlabellist
    \centering
    \includegraphics[scale=1.0]{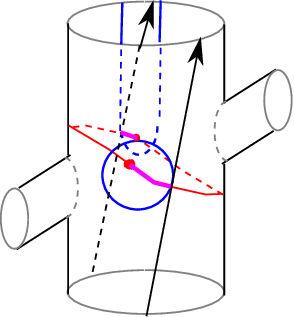}
\caption{Preamble to Lemma \ref{lemma:vertdodge}} 
 \label{fig:vertdodge1}
    \end{figure}

 \begin{lemma} \label{lemma:vertdodge}  Let $(\calD, \bdd \calD) \subset (C, \bdd_+ C)$ and $\calA \subset C$ be as in Corollary \ref{cor:annulidiskdis}.  Suppose $\hat{\beta} = \{\bbb_i\}, i = 1, 2$ is a vertical family of arcs in $C$ whose end points $p_i \in \bdd_- C$ are disjoint from the family of circles $\hat{a} =  \calA \cap \bdd_- C$ in $\bdd_- C$.  Then $\hat{\beta}$ can be properly isotoped rel  $\{p_i\}$ so that it is disjoint from $\calA \cup \calD$.
\end{lemma}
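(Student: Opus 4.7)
The plan is to use Lemma~\ref{lemma:annulus} to embed $\hat\beta$ in a vertical family $\calA_\beta$ of spanning annuli whose foot on $\bdd_-C$ is a prescribed circle family $c$ containing $p_1, p_2$, and then to arrange $c$ (using $b$ and $x$) so that $\calA_\beta$ can be pushed off $\calA \cup \calD$ by an ambient isotopy of $C$ that is the identity on $\bdd_-C$.  Restricting this ambient isotopy to $\hat\beta \subset \calA_\beta$ then gives the required proper isotopy of $\hat\beta$ rel $\{p_i\}$.

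The crux is the choice of $c$: I would make $c$ disjoint from $\hat a = \calA \cap \bdd_-C$, so that $\calA_\beta$ meets $\calA$ only in the interior of $C$.  If $b \cap \hat a = \emptyset$, take $c = b$.  If not, reroute $b$ using $x$: along a subarc of $x$ connecting $q_1$ and $q_2$, perform a band-sum on $b$ that replaces the portion of $b$ running between $q_1$ and $q_2$ through $\hat a$ by the $x$-arc.  The hypothesis that $x$ meets $b$ exactly at $q_1, q_2$ with consistent orientation ensures the band-sum yields an embedded simple closed curve, and the closest-point property $\sss_i \cap \hat a = \{q_i\}$ keeps $p_1, p_2$ on the new curve.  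Iterating, with fresh auxiliary curves (whose existence is guaranteed because $b$ is non-separating), eventually produces $c$ through $p_1, p_2$ disjoint from $\hat a$.

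Given such $c$, Lemma~\ref{lemma:annulus} yields $\calA_\beta$ with $\hat\beta \subset \calA_\beta$ and $\calA_\beta \cap \bdd_-C = c$, so that $\calA_\beta \cap (\calA \cup \calD)$ lies entirely in the interior of $C$.  Let $\Delta$ be a complete meridian system disjoint from $\calA \cup \calD$, as given by Corollary~\ref{cor:annulidiskdis}.  Innermost-disk and outermost-arc arguments applied to $\calA_\beta \cap \Delta$ and $\calA_\beta \cap (\calA \cup \calD)$---with the asphericity of $C$ and the injectivity of $\pi_1(\bdd_-C) \to \pi_1(C)$ used to rule out essential obstructing circles, exactly as in the proof of Lemma~\ref{lemma:annulus}---allow me to isotope $\calA_\beta$ rel $c$ until $\calA_\beta \cap (\calA \cup \calD) = \emptyset$.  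Since this isotopy fixes all of $\bdd_-C$ (in particular $p_1, p_2$), its restriction to $\hat\beta \subset \calA_\beta$ is a proper isotopy of $\hat\beta$ rel $\{p_i\}$ that achieves the conclusion.

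The hard part will be constructing $c$ when $b$ meets $\hat a$: iterating the band-sums consistently to produce an embedded simple closed curve through $p_1, p_2$ disjoint from $\hat a$, while handling the possibility that the auxiliary arc $x$ itself may cross $\hat a$.  The non-separating property of $b$ and the consistent-orientation condition on $x \cap b$ are both essential here; without them the band-sums could fail to yield an embedded curve or could fail to reduce $|c \cap \hat a|$.
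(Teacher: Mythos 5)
Your strategy has a genuine gap at its core: you propose to find an embedded circle $c \subset F$ passing through both $p_1$ and $p_2$ and disjoint from $\hat a = \calA \cap \bdd_-C$, but in general no such circle exists.  If $p_1$ and $p_2$ lie in different complementary components of $\hat a$ in $F$ (e.g.\ $\hat a$ contains a separating curve with $p_1,p_2$ on opposite sides), then any connected curve through both must cross $\hat a$, and no amount of band-summing can remove those intersections.  The iteration you describe could at best reduce $|c \cap \hat a|$ to the geometric minimum, which need not be zero; you flag the difficulty that $x$ may itself meet $\hat a$, but the real obstruction is topological, not a bookkeeping issue.  So the proposal cannot work as written.

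The paper's proof does not try to make $b$ miss $\hat a$.  Instead, when $b \cap \hat a \neq \emptyset$, the curve $x$ is used for a different purpose: since each $q_i$ lies on the foot of some annulus $A_i \in \calA$ and $\calA$ is a \emph{vertical} family, there is a vertical pair of spanning arcs $\alpha_i \subset A_i$ with endpoints $q_i$, and Lemma~\ref{lemma:annulus} produces a spanning annulus $X$ with foot $x$ that \emph{contains} the $\alpha_i$.  Because $x$ meets $b$ essentially in exactly $q_1, q_2$, the annuli $B$ and $X$ meet in exactly two spanning arcs $\gamma_i$ starting at $q_i$.  One then isotopes $\beta_i$ within $B$ (rel $p_i$) close to the concatenation $\sss_i \cup \gamma_i$, isotopes $\gamma_i$ within $X$ (rel $q_i$) to $\alpha_i$, and ends with $\beta_i$ parallel to $\sss_i \cup \alpha_i$, which lies in $F \cup \calA$; a small push-off then clears $\calA \cup \calD$.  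This sidesteps the separation obstruction entirely because $\beta_i$ is allowed to travel \emph{along} $\calA$ rather than around it.  (Also, even in the easy case $b \cap \hat a = \emptyset$, the paper only isotopes the arcs $\hat\beta$ within the single annulus $B$, which is cleaner than trying to push the entire annulus $\calA_\beta$ off $\calA \cup \calD$ as you propose.)
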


\begin{proof} We suppose that both components of $\hat{\beta}$ are incident to the same component $F$ of $\bdd_- C$.  The proof is essentially the same (indeed easier) if they are incident to different components of $\bdd_- C$.  Let $\Delta$ be a complete family of meridian disks as given in Corollary \ref{cor:annulidiskdis}, so $\calA$ lies entirely in a collar of $\bdd_- C$.  Per Lemma \ref{lemma:annulus}, let $B \subset C$ be a spanning annulus that contains the vertical pair $\hat{\beta}$ and has the curve $b$ (from the preamble to this lemma) as its end $B \cap F$ on $F$.  

Suppose first that $b$ is disjoint from $\hat{a}$ and consider $B \cap (\Delta \cup \calD \cup \calA)$.  If there were a circle $c$ of intersection that is essential in $B$, then it could not be in $\Delta \cup \calD$, since $b$ does not compress in $C$.  The circle $c$ could not be essential in $\calA$, since $b$ was chosen so that it is not isotopic to any element of $\hat{a}$, and it can't be inessential there either again since $b$ does not compress in $C$.  We deduce that there can be no essential circle of intersection, so any circles in $B \cap (\Delta \cup \calD \cup \calA)$ are inessential in $B$.  Also, any arc of intersection must have both ends on $\bdd_+ C$ since $b$ is disjoint from $\hat{a}$.  It follows that the spanning arcs $\hat{\beta}$ of $B$ can be properly isotoped in $B$ to arcs that avoid $\Delta \cup \calD \cup \calA$.  So, note, they are in the collar of $\bdd_- C$ as well as being disjoint from $\calA \cup \calD$ as required.

    \begin{figure}[th]
     \labellist
   \small\hair 2pt
\pinlabel  $B$ at 300 65
\pinlabel  $\aaa$ at 115 165
\pinlabel  $\gamma$ at 140 170
\pinlabel  $x$ at 10 160
\pinlabel  $b$ at 280 180
\pinlabel  $q$ at 100 210
\pinlabel  $p$ at 365 215
\pinlabel  $\beta$ at 360 165
\pinlabel  $\sss$ at 390 210
\pinlabel  $q$ at 410 210
\pinlabel  $X$ at 40 70
\pinlabel  $\gamma$ at 300 160
\pinlabel  $B\cap\Delta$ at 450 100
\endlabellist
    \centering
    \includegraphics[scale=0.7]{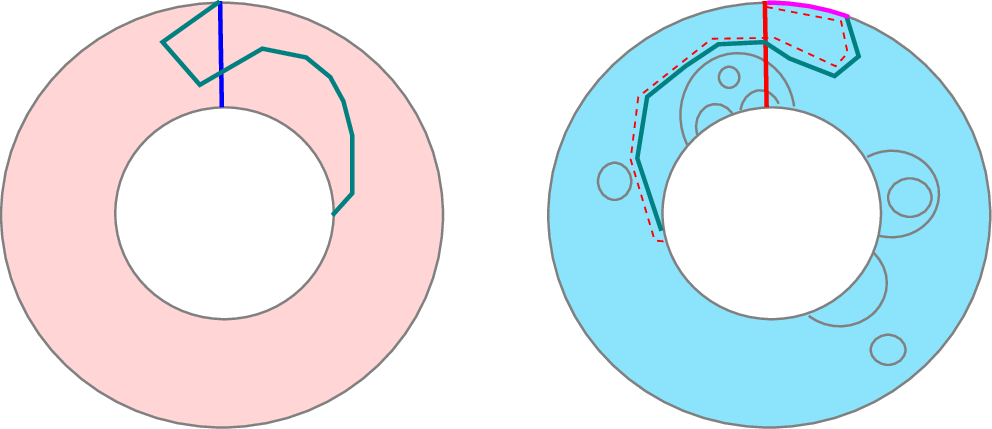}
\caption{Concluding the proof of Lemma \ref{lemma:vertdodge}} 
 \label{fig:vertdodge2}
    \end{figure}

Now suppose that $b$ is not disjoint from $\hat{a}$ and let the points $q_i$, the subarcs $\sss_i$ of $b$ and the simple closed curve $x \subset F$ be as described in the preamble to this lemma.  By construction, each $q_i$ is in the end of an annulus $A_i \subset \calA$; let $\alpha_i \subset A_i$ be a spanning arc of $A_i$ with an end on $q_i$.  Since $\calA$ is a vertical family of annuli, $\alpha_1, \alpha_2$ is a vertical pair of spanning arcs.  Per Lemma \ref{lemma:annulus},  there is a spanning annulus $X$ that contains the $\alpha_i$ and has the curve $x$ as its end $X \cap F$ on $F$.  Since $x$ essentially intersects $b$ in these two points, $B \cap X$ contains exactly two spanning arcs $\gamma_i, i = 1, 2$, each with one end point on the respective $q_i$.  

In $B$ the spanning arcs $\bbb_i$ can be properly isotoped rel $p_i$ so that they are each very near the concatenation of $\sss_i$ and $\gamma_i$; in $X$ the arcs $\gamma_i$ can be properly isotoped rel $q_i$ to $\alpha_i$. See Figure \ref{fig:vertdodge2}.   (One could also think of this as giving an ambient isotopy of the annulus $B$ so that afterwards $\gamma_i = \alpha_i$.) The combination of these isotopies then leaves $\beta_i$ parallel to the arc $\sss_i \cup \alpha_i$.  A slight push-off away from $A_i$ leaves $\beta_i$ disjoint from $\calA \cup \calD$ as required.
\end{proof}

\section{Verticality in compression bodies} \label{sect:vertical2}

We no longer will assume that compression bodies are aspherical.  That is, $\bdd_- C$ may contain spheres.  We will denote by $\hat{C}$ the aspherical compression body obtained by attaching a $3$-ball to each such sphere.

Figure \ref{fig:snug} shows a particularly useful type of meridian disk to consider when $\bdd_- C$ contains spheres.

\begin{defin}  \label{defin:snug}
A complete collection $\Delta$ of meridian disks in a compression body $C$ is a {\em snug collection} if, for each sphere $F \subset \bdd_- C$ the associated collar of $F$ in $C - \Delta$ is incident to exactly one disk $D_F \in \Delta$.
\end{defin}

The use of the word ``snug" is motivated by a simple construction.  Suppose $\Delta$ is a snug collection of meridian disks for $C$ and $F \subset \bdd_- C$ is a sphere.  Then the associated disk $D_F \subset \Delta$ is completely determined by a spanning arc $\alpha_{F}$ in the collar of $F$ in $C - \Delta$, and vice versa:  The arc $\alpha_{F}$ is uniquely determined by $D_F$, by the light-bulb trick, and once $\alpha_F$ is given, $D_F$ is recovered simply by taking a regular neighborhood of $\alpha_F \cup F$.  This regular neighborhood is a collar of $F$, and the end of the collar away from $F$ itself is the boundary union of a disk in $\bdd_+ C$ and a copy of $D_F$.  With that description, we picture $D_F$ as sitting ``snugly" around  $\alpha_{F} \cup F$.  See Figure \ref{fig:snug3}.

    \begin{figure}[th]
     \labellist
   \small\hair 2pt
\pinlabel  $D_F$ at 150 70
\pinlabel  $\alpha_F$ at 120 13
\pinlabel  $\bdd_+C$ at 190 20
\pinlabel  $F$ at 110 35
\endlabellist
    \centering
    \includegraphics[scale=0.8]{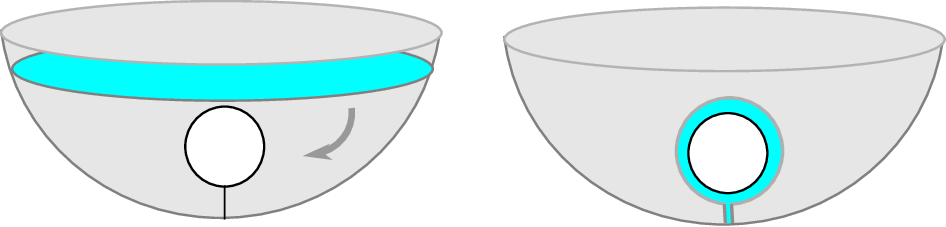}
\caption{$D_F$ snuggles down around $\alpha_{F} \cup F$} 
 \label{fig:snug3}
    \end{figure}

Following immediately from Definition \ref{defin:snug} is:

\begin{lemma} \label{lemma:fillsnug} Suppose $C$ is a compression body and $\hat{\Delta}$ is a collection of meridian disks for $C$ that is a complete collection for the aspherical compression body $\hat{C}$.  Then $\hat{\Delta}$ is contained in a snug collection for $C$.
\end{lemma}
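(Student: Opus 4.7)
The plan is to adjoin to $\hat{\Delta}$, for each sphere component $F \subset \bdd_- C$, a single snug disk $D_F$ built from a suitable arc $\alpha_F$ exactly as described in the paragraph just before the statement of this lemma. The resulting collection $\Delta = \hat{\Delta} \cup \{D_F\}$ will be shown to be snug.

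To produce the arc $\alpha_F$, let $B_F \subset \hat{C}$ be the ball used to cap $F$ and let $X_F$ be the component of $\hat{C} - \eta(\hat{\Delta})$ containing $B_F$. Since $\hat{\Delta}$ is complete for $\hat{C}$, each $X_F$ is either a $3$-ball or a component of the collar $\bdd_- \hat{C} \times I$. In either case $\bdd X_F$ contains a piece of $\bdd_+ C$: every disk of $\hat{\Delta}$ meeting $X_F$ contributes two parallel disk faces to $\bdd X_F$ whose boundary circles lie on $\bdd_+ C$, and at least one such disk must meet $X_F$ (the alternative $\hat{\Delta} = \emptyset$ forces $\hat{C}$ to be a single $3$-ball, a case that may be dispatched directly). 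Choose $\alpha_F \subset X_F$ to be an arc from a point of $F = \bdd B_F$ to a point of $\bdd X_F \cap \bdd_+ C$, the family $\{\alpha_F\}$ taken pairwise disjoint by general position. Let $D_F$ be the associated snug disk. Since $D_F \subset X_F$, it is automatically disjoint from $\hat{\Delta}$ and from every other $D_{F'}$.

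Snugness is then immediate from the construction: the component of $C - \eta(\Delta)$ containing $F$ is precisely $\eta(F \cup \alpha_F) \setminus \inter(B_F)$, a collar of $F$ meeting $\Delta$ only in the single disk $D_F$. To verify completeness of $\Delta$ for $C$, I would compare $C - \eta(\Delta)$ to $\hat{C} - \eta(\hat{\Delta})$: inside each $X_F$ we remove the $3$-ball $\eta(F \cup \alpha_F)$ and cut along the disk $D_F$. Since $\eta(F \cup \alpha_F)$ is attached to $X_F$ along only a single disk lying in $\bdd_+ C$, its removal is topologically trivial, so ball components of $\hat{C} - \eta(\hat{\Delta})$ remain $3$-balls in $C - \eta(\Delta)$, the collar component remains a collar of $\bdd_- \hat{C}$, and the additional pieces produced are exactly the collars of the spheres $F$. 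Together these assemble to a full collar of $\bdd_- C = \bdd_- \hat{C} \cup \bigcup F$ plus some $3$-balls, so $\Delta$ is complete and hence snug.

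The main bookkeeping point is verifying that $\bdd X_F$ always contains a region of $\bdd_+ C$ and that $\eta(F \cup \alpha_F)$ attaches to $X_F$ along only a disk, both in the $3$-ball case and in the case where $B_F$ happens to lie inside the collar component of $\hat{C} - \eta(\hat{\Delta})$; once these are confirmed the conclusion is a direct consequence of Definition \ref{defin:snug}.
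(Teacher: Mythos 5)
Your construction is the same as the paper's: for each sphere component $F$ of $\bdd_- C$, choose an arc $\alpha_F$ in $C - \eta(\hat{\Delta})$ from $F$ to $\bdd_+ C$, build the snug disk $D_F$ as a meridian of $\eta(F \cup \alpha_F)$, and adjoin these to $\hat{\Delta}$. The paper's proof is a one-liner leaving the verification to the reader, while yours spells out existence of $\alpha_F$, disjointness, and completeness; both are correct, and you even fix the slip in the paper's text, which writes $\bdd_- C$ where $\bdd_+ C$ is meant.
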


\begin{proof}  
For each sphere component $F$ of $\bdd_- C$, let $\alpha_F$ be a properly embedded arc in $C - \hat{\Delta}$ from $F$ to $\bdd_- C$ and construct a corresponding meridian disk $D_F$ as just described.  Then the union of $\hat{\Delta}$ with all these new meridian disks is a snug collection for $C$.
\end{proof}

Following Definition \ref{defin:comcoll} we noted that for an aspherical compression body, two complete collections of meridian disks can be handle-slid and isotoped to be disjoint.  As a useful warm-up we will show that this is also true for snug collections, in case $\bdd_- C$ contains spheres.  This is the key lemma:

\begin{lemma} \label{lemma:Zupan} Suppose $C$ is a compression-body with $p, q \in \bdd_+ C$ and $r \in interior(C)$.  Suppose $\aaa$ and $\bbb$ are arcs from $p$ and $q$ respectively to $r$ in $C$.  Then there is a proper isotopy of $\beta$ to $\alpha$ in $C$, fixing $r$. 
 \end{lemma}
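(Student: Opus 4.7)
The plan is to puncture $C$ at $r$ and then compare the two arcs as vertical spanning arcs ending on a sphere boundary component of the resulting compression body.

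First, I would pick a closed ball $B \subset \mathrm{int}(C)$ containing $r$ in its interior, chosen so that $\bdd B$ meets each of $\alpha, \beta$ transversely in a single point. Setting $C' := C - \mathrm{int}(B)$ produces a compression body with $\bdd_+ C' = \bdd_+ C$ and $\bdd_- C' = \bdd_- C \sqcup F$, where $F := \bdd B$ is a new sphere component of $\bdd_- C'$. The truncated arcs $\alpha' := \alpha \cap C'$, $\beta' := \beta \cap C'$ are then spanning arcs in $C'$ from $\bdd_+ C'$ to $F$. Any proper isotopy of $\beta'$ to $\alpha'$ in $C'$ (with the $F$-endpoint free on $F$) will extend across $B$ to a proper isotopy of $\beta$ to $\alpha$ in $C$ fixing $r$, since inside the ball $B$ any two arcs from a boundary point to the center are isotopic rel the center.

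Next, by the construction following Definition~\ref{defin:snug}, each of $\alpha', \beta'$ is the axis of a unique snug disk $D_\alpha, D_\beta$ for $F$ in $C'$. Lemma~\ref{lemma:fillsnug} applied to $\{D_\alpha\}$ extends $D_\alpha$ to a complete snug collection of meridians for $C'$, and with respect to this collection $\alpha'$ is manifestly a vertical spanning arc; similarly for $\beta'$.

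The core of the argument is to show $\alpha'$ and $\beta'$ are properly isotopic in $C'$, and this is where I expect the main difficulty. Proposition~\ref{prop:vertunique} does not apply directly: its proof uses two essential simple closed curves on the target boundary component intersecting transversely in a single point, which has no analogue when that component is a sphere. The substitute is the light-bulb trick: because $F$ is a sphere, an ambient isotopy of $C'$ supported in an arbitrarily small collar of $F$ slides $\beta' \cap F$ along $F$ to coincide with $\alpha' \cap F$. After this isotopy, both arcs emerge from a common point $p \in F$, and standard innermost-disk and outermost-arc arguments --- performed in the aspherical compression body $\hat{C'}$ obtained by capping $F$ off with a ball, which is irreducible by Proposition~\ref{prop:comirred} --- allow $D_\alpha$ and $D_\beta$ to be made disjoint in $C'$. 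Once $D_\alpha, D_\beta$ are disjoint, the snug collars they bound nest, and $\alpha', \beta'$ become properly isotopic within the product region between them. The remaining bookkeeping is to reassemble this isotopy across the puncture ball $B$, yielding the required proper isotopy of $\beta$ to $\alpha$ in $C$ fixing $r$.
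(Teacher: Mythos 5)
Your reduction — punch out a small ball $B$ at $r$, view $\alpha', \beta'$ as spanning arcs in $C' := C - \mathrm{int}(B)$ to the new sphere $F = \partial B$, and note that a proper isotopy of $\beta'$ to $\alpha'$ in $C'$ recovers the desired isotopy in $C$ — is a nice and correct observation, and your concluding step (if the snug disks $D_\alpha, D_\beta$ can be made disjoint, the collars of $F$ they cut off nest, and both arcs become spanning arcs of the outer collar $\cong F \times I$, hence properly isotopic) is also sound. But the middle step, where you claim that ``standard innermost-disk and outermost-arc arguments\ldots allow $D_\alpha$ and $D_\beta$ to be made disjoint in $C'$,'' is exactly where the difficulty of the lemma lives, and as written it does not go through.

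The innermost-circle step needs the sphere $E \cup E'$ ($E \subset D_\alpha$, $E' \subset D_\beta$) to bound a ball in the manifold you are isotoping in. That manifold must be $C'$ (or $C'$ with only the spheres other than $F$ capped off), because once you fill in $F$ the arcs cease to be spanning arcs and the snug-disk structure, on which your final step depends, evaporates. But $C'$ has $F$ in $\partial_- C'$ and hence is \emph{not} irreducible (Proposition \ref{prop:comirred} applies only to the aspherical compression body), so there is no guarantee that $E \cup E'$ bounds a ball on the relevant side. Isotoping in $\hat{C'}$ instead yields only a porous isotopy of $D_\alpha$, which does not translate into a proper isotopy of $\alpha'$ in $C'$. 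Worse, the statement ``two snug disks for the same sphere $F$ can be made disjoint by a proper isotopy in $C'$'' is essentially the $n=1$ case of Proposition \ref{prop:snugdisjoint}, and Phase 1 of the paper's proof of that proposition is obtained precisely by invoking Lemma \ref{lemma:Zupan}. So your route is circular: the step you leave to ``standard arguments'' is the very content of the lemma. There is also a missing ingredient that the paper's proof handles explicitly: before one can hope to compare $\alpha$ and $\beta$, one must slide the endpoint $q$ along a path $\gamma \subset \partial_+ C$ chosen (using surjectivity of $\pi_1(\partial_+ C) \to \pi_1(C)$) so that $\alpha \cup \beta \cup \gamma$ is null-homotopic; your argument never addresses the relative homotopy class of $\alpha \cup \beta$ at all. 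The paper then finishes by citing Hass--Thompson \cite[Proposition 4]{HT}, a non-elementary light-bulb-for-arcs result that converts the resulting homotopy rel endpoints into an isotopy; no amount of innermost-disk surgery replaces that input.
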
  

    \begin{figure}[th]
     \labellist
   \small\hair 2pt
\pinlabel  $q$ at 220 40
\pinlabel  $p$ at 40 0
\pinlabel  $r$ at 170 90
\pinlabel  $\aaa$ at 120 110
\pinlabel  $\bbb$ at 340 110
\pinlabel  $\gamma$ at 140 145
%\pinlabel  $0$ at 430 140
\endlabellist
    \centering
    \includegraphics[scale=0.6]{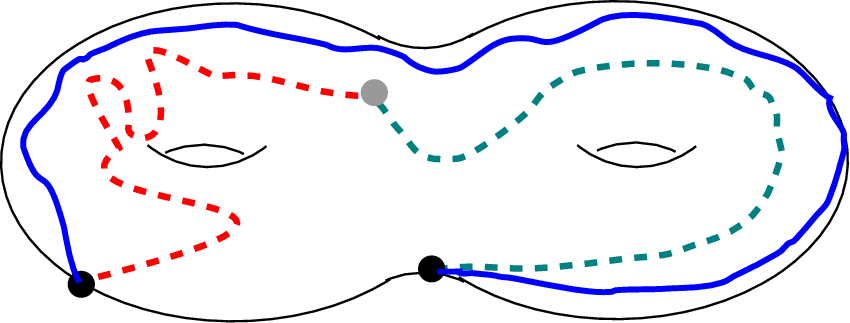}
\caption{} 
 \label{fig:ZupanRev}
    \end{figure}
 
\begin{proof}  Let $\Sss$ be a spine for the compression-body $C$.  By general position, we may take $\Sss$ to be disjoint from  the path $\aaa \cup \bbb$.  Since $\pi_1(\bdd_+ C) \to \pi_1(C)$ is surjective there is a path $\gamma$ in $\bdd C$ so that the closed curve $\aaa \cup \bbb \cup \gamma$ is null-homotopic in $C$.  Slide the end of $\bbb$ at $q$ along $\gamma$ to $p$ so that $\beta$ becomes an arc $\bbb'$ (parallel to the concatenation of $\gamma$ and $\bbb$) also from $p$ to $r$, one that is homotopic to $\aaa$ rel end points. A sophisticated version of the light-bulb trick (\cite[Proposition 4]{HT}) then shows that $\aaa$ and $\bbb'$ are isotopic rel end points.  (Early versions of this paper appealed to the far more complex  \cite[Theorem 0]{FS1} to provide such an isotopy.)
 \end{proof}   
 
 \begin{prop} \label{prop:snugdisjoint} Suppose $\Delta$ and $\Delta'$ are snug collections of meridian disks for $C$.  Then $\Delta$ can be made disjoint from $\Delta'$ by a sequence of handle slides and proper isotopies.
 \end{prop}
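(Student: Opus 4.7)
The plan is to decompose each snug collection into its non-sphere and sphere parts and handle them in two stages, using the classical aspherical argument for the first and Lemma~\ref{lemma:Zupan} for the second. Write $\Delta = \hat\Delta \sqcup \{D_F\}$ and $\Delta' = \hat\Delta' \sqcup \{D'_F\}$, where the unions run over the sphere components $F$ of $\bdd_- C$ and $D_F, D'_F$ are the associated snug disks. Inside $\hat C$, each snug disk $D_F$ together with a subdisk of $\bdd_+ C$ bounds a ball containing the capping ball $B_F$, so $D_F$ is boundary-parallel in $\hat C$. Consequently $\hat\Delta$ and $\hat\Delta'$ are complete collections of meridian disks for the aspherical compression body $\hat C$.

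\emph{Phase 1 (non-sphere disks).} In the aspherical $\hat C$, the classical innermost-disk/outermost-arc argument sketched after Definition~\ref{defin:comcoll} (valid because $\hat C$ is irreducible by Proposition~\ref{prop:comirred}) yields a sequence of handle slides and isotopies after which $\hat\Delta \cap \hat\Delta' = \emptyset$. Since these moves can be performed in the complement of the capping balls $B_F$, they descend to legitimate handle slides in $C$.

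\emph{Phase 2 (sphere disks).} Fix a sphere $F \subset \bdd_- C$. The disks $D_F, D'_F$ are regular neighborhoods of $F \cup \alpha_F$ and $F \cup \alpha'_F$, where $\alpha_F, \alpha'_F$ are arcs from $\bdd_+ C$ to points of $F$. Using the light-bulb trick on $F$, slide the endpoint of $\alpha_F$ along $F$ until it meets that of $\alpha'_F$; then push this common endpoint slightly into a collar of $F$ to obtain an interior point $r \in C$. Lemma~\ref{lemma:Zupan} applied to $\alpha_F$ and $\alpha'_F$ (with $p, q \in \bdd_+ C$ their $\bdd_+$-endpoints) then provides a proper isotopy of $\alpha_F$ to $\alpha'_F$ fixing $r$. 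After this isotopy, the new arc $\tilde\alpha_F$ lies in an arbitrarily thin tubular neighborhood of $\alpha'_F$, so the corresponding new snug disk $\tilde D_F$ is a parallel pushoff of $D'_F$; a perpendicular nudge yields $\tilde D_F \cap D'_F = \emptyset$. By snugness of $\Delta'$, this thin neighborhood of $F \cup \alpha'_F$ is automatically disjoint from $\hat\Delta'$ and from $\{D'_{F'}\}_{F' \ne F}$, so $\tilde D_F$ is disjoint from all of $\Delta' \setminus \{D'_F\}$ as well. Iterating over all spheres $F$ (whose adjustments occur in disjoint thin neighborhoods and so do not interfere) finishes the argument.

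\emph{Expected obstacle.} The delicate point is that while the original $\alpha_F$ was disjoint from $\hat\Delta$ by snugness of $\Delta$, the replacement $\tilde\alpha_F \approx \alpha'_F$ need not be: Zupan's isotopy is carried out in all of $C$, and $\alpha'_F$ may well cross $\hat\Delta$. My remedy is to apply Lemma~\ref{lemma:Zupan} not inside $C$ but inside the component of $C \setminus \eta(\hat\Delta)$ containing $F$, which by completeness of $\hat\Delta$ is a collar of $F$ (together with some attached balls) and so still admits the required spine / light-bulb arguments; alternatively, one can simply rerun an outermost-arc cleanup in this collar after Phase 2 to push $\tilde\alpha_F$ off $\hat\Delta$ without disturbing its proximity to $\alpha'_F$. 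Verifying that one of these moves is available in the restricted setting is the crux that will occupy most of the real work.
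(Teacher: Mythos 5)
Your plan uses the same two tools as the paper (the classical aspherical innermost-disk/outermost-arc argument and Lemma~\ref{lemma:Zupan}), but you run the two phases in the opposite order, and this creates problems the paper's ordering is designed to avoid. The paper first makes the snug disks of $\Delta$ coincide with those of $\Delta'$ (by isotoping $\alpha_i$ to $\alpha'_i$ via Lemma~\ref{lemma:Zupan}), \emph{then} freezes these disks and runs the classical cleanup on $\hat\Delta$ versus $\hat\Delta'$ inside the component of $C - \{D_i\}$, which is literally a copy of the aspherical $\hat C$. Because the snug disks are already coinciding and held fixed, all subsequent slides and isotopies are constrained to a single aspherical compression body and there is nothing left over to collide with.

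Your ordering runs into two gaps. First, the one you flagged yourself: after Phase~1, the Zupan isotopy dragging $\alpha_F$ to $\alpha'_F$ can drag $D_F$ across $\hat\Delta$, and your proposed remedies are not clearly available---applying Lemma~\ref{lemma:Zupan} inside a component of $C \setminus \eta(\hat\Delta)$ requires $\alpha'_F$ to lie in that component, but $\alpha'_F$ was chosen disjoint from $\hat\Delta'$, not from $\hat\Delta$, and there is no reason for it to avoid $\hat\Delta$. Second, and not flagged: your Phase~1 only achieves $\hat\Delta \cap \hat\Delta' = \emptyset$, leaving the intersections $\hat\Delta \cap \{D'_{F}\}$ untreated, and Phase~2 (which moves only the snug disks of $\Delta$, not $\hat\Delta$) never addresses them. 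Since $\{D'_F\} \subset \Delta'$, these are genuine intersections between $\Delta$ and $\Delta'$ that survive your argument. Eliminating them would need a separate outermost-arc argument against the $\partial$-parallel disks $D'_F$, which you would then have to verify does not re-disturb Phase~1; this is precisely the kind of interference the paper's phase order sidesteps by treating the snug disks \emph{first} and locking them in place before touching $\hat\Delta$.
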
 
 
 \begin{proof}  Let $\mathcal{F} = \{F_i\}, 1 \leq i \leq n$ be the collection of spherical boundary components of $C$.  Since $\Delta$ (resp $\Delta'$) is snug, to each $F_i$ there corresponds a properly embedded arc $\alpha_i$ (resp $\alpha'_i$) in $C$ from $F_i$ to $\bdd_+ C$ and this arc determines the meridian disk in $D_i \subset \Delta$ (resp $D'_i \subset \Delta'$) associated to $F_i$ as described after Definition \ref{defin:snug}.  The proof in the aspherical case (as outlined following Definition \ref{defin:comcoll}; see also \cite{Sc1}) was achieved by isotopies and slides reducing $|\Delta \cap \Delta'|$.  In the general case the proof proceeds in two phases. 
 
 {\bf Phase 1:}  We will properly isotope the arcs $\{\alpha_i\}, 1 \leq i \leq n$ to $\{\alpha'_i\}, 1 \leq i \leq n$.  The associated ambient isotopy of $\Delta$ in $C$ may increase $|\Delta \cap \Delta'|$ but in this first phase we don't care.  Once each $\alpha_i = \alpha_i'$, each snug disk $D_i$ can be made parallel to $D'_i$ by construction.
 
Pick a sphere component $F_i$ with associated arcs $\alpha_i$ and $\alpha'_i$.  Isotope the end of $\alpha_i$ on $F_i$ to the end $r$ of $\alpha'_i$ at $F_i$.  Temporarily attach a ball $B$ to $F_i$ and apply Lemma \ref{lemma:Zupan} to the arcs $\alpha, \alpha'$, after which $\alpha$ and $\alpha'$ coincide.  By general position, we can assume the isotopy misses the center $b$ of $B$ and by the light-bulb trick that it never passes through the radius of $B$ between $b$ and $r$.  Now use radial projection from $b$ to push the isotopy entirely out of $B$ and thus back into $C$.

Having established how to do the isotopy for a single $\alpha_i$, observe that we can perform such an isotopy simultaneously on all $\alpha_i, 1 \leq i \leq n$.  Indeed, anytime the isotopy of $\alpha_i$ is to cross $\alpha_j, i \neq j$ we can avoid the crossing by pushing it along $\alpha_j$, over the sphere $F_j$, and then back along $\alpha_j$; in short, use the light-bulb trick.

{\bf Phase 2:} We eliminate $\Delta \cap \Delta'$ by reducing $|\Delta \cap \Delta'|$, as in the aspherical case.  After Phase 1, the disks $\{D_i\}, 1 \leq i \leq n$ are parallel to the disks $\{D'_i\}, 1 \leq i \leq n$; until the end of this phase we take them to coincide and also to be fixed, neither isotoped nor slid.  Denote the complement in $\Delta$ (resp $\Delta'$) of this collection of disks $\{D_i\}$ by $\hat{\Delta}$ (resp $\hat{\Delta}'$), since they constitute a complete collection of meridians in $\hat{C}$.  Moreover, the component of $C - \{D_i\}$ containing $\hat{\Delta}$ and $\hat{\Delta}'$ is homeomorphic to $\hat{C}$, so that is how we will designate that component.

Motivated by that last observation, we now complete the proof by isotoping and sliding  $\hat{\Delta}$, much as in the aspherical case, to reduce $|\hat{\Delta} \cap \hat{ \Delta'}|$. Suppose first there are circles of intersection and let $E' \subset \hat{ \Delta'}$ be a disk with interior disjoint from $\hat{\Delta}$ cut off by an innermost such circle of intersection in $\hat{ \Delta'}$.  Then $\bdd E'$ also bounds a disk $E \subset \hat{\Delta}$ (which may further intersect $\hat{ \Delta'}$).  Although $C$ is no longer aspherical, the sphere $E \cup E'$ lies entirely in $\hat{C}$, which is aspherical, so $E \cup E'$ bounds a ball in $\hat{C}$, through which we can isotope $E$ past $E'$, reducing $|\hat{\Delta} \cap \hat{ \Delta'}|$ by at least one.

Once all the circles of intersection are eliminated as described, we consider arcs in $\hat{\Delta} \cap \hat{ \Delta'}$.  An outermost such arc in $\hat{ \Delta'}$ cuts off a disk $E'$ from $\hat{ \Delta'}$ that is disjoint from $\hat{\Delta}$; the same arc cuts off a disk $E$ from $\hat{\Delta}$ (which may further intersect $\hat{ \Delta'}$). The properly embedded disk $E \cup E' \subset \hat{C}$ has boundary on $\bdd_+ \hat{C}$ and its interior is disjoint from $\Delta$.  The latter fact means that its boundary lies on one end of the collar $\hat{C} - \eta(\Delta)$ of a non-spherical component $F$ of $\bdd_- C$.  But in a collar of $F$ any properly embedded disk is $\bdd$-parallel.  Use the disk in the end of the collar (the other end from $F$ itself) to which $E \cup E'$ is parallel to slide $E$ past $E'$ (possibly sliding it over other disks in $\Delta$, including those in $\{D_i\}$), thereby reducing  $|\hat{\Delta} \cap \hat{\Delta'}|$ by at least one.

Once $\hat{\Delta}$ and $\hat{\Delta'}$ are disjoint, slightly push the disks $\{D_i\}$ off the presently coinciding disks $\{D'_i\}$ so that $\Delta$ and $\Delta'$ are disjoint.  
\end{proof}

Energized by these observations we will now show that all the results of Section \ref{sect:vertical} remain true (in an appropriate form) in compression bodies that are not aspherical, that is, even when there are sphere components of $\bdd_- C$.  Here are the analogous results, with edits on statement in boldface, and proofs annotated as appropriate:

\begin{lemma}[cf. Lemma \ref{lemma:annulus}] \label{lemma:sannulus} Suppose $\hat{\alpha}$ is a family of spanning arcs in compression body $C$.

\begin{itemize}
\item Suppose $\hat{\alpha}$ is vertical and $c$ is an associated circle family.  Then there is a family $\calA$ of disjoint spanning annuli in $C$ so that $\calA$ contains $\hat{\alpha}$ and $\calA \cap \bdd_-C = c.$
\item  Suppose, on the other hand, there is a collection $\calA$ of disjoint spanning annuli in $C$ that contains $\hat{\alpha}$.  Suppose further that
\begin{itemize} 
\item {\bf \em at most one arc in $\hat{\alpha}$ is incident to each sphere component of $\bdd_-C$} and
\item in the family of circles $\calA \cap \bdd_-C$ associated to $\hat{\alpha}$, each circle  {\bf \em lying in a non-spherical component of $\bdd_- C$} is essential. 
\end{itemize}  
Then $\alpha$ is a vertical family.  
\end{itemize}
\end{lemma}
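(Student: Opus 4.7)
For the first bullet, the construction is identical to the aspherical case. Given the complete meridian collection $\Delta$ witnessing verticality of $\hat{\alpha}$ and the product homeomorphism $h$ on the collar components $N$ of $C - \Delta$ as in Definition \ref{defin:vertical}, the image $h(c \times I)$ provides the required disjoint spanning annuli containing $\hat{\alpha}$, after a small general-position perturbation near the $\bdd_+ C$ ends to push the top circles off the meridian disks in the top of each collar. Sphere components of $\bdd_- C$ cause no additional trouble here: an annulus ending on such a sphere may be compressible in $C$, but it is still a spanning annulus and contains the relevant arc of $\hat{\alpha}$ as a vertical fiber.

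For the second bullet, the plan is to reduce to the aspherical Lemma \ref{lemma:annulus} applied to $\hat{C}$, handling the sphere-incident arcs separately via snug meridian disks. After discarding annuli of $\calA$ that contain no arcs of $\hat{\alpha}$, partition $\hat{\alpha} = \hat{\alpha}_S \sqcup \hat{\alpha}_{NS}$, where $\hat{\alpha}_S$ is the set of arcs incident to sphere components of $\bdd_- C$. Hypothesis (a) then ensures each sphere is incident to at most one such arc, and consequently at most one annulus of $\calA$ ends on it. The key observation is that, for an arc $\alpha \in \hat{\alpha}_{NS}$, its containing annulus $A \in \calA$ has its $\bdd_- C$ end on the same (non-sphere) component as $\alpha$'s own $\bdd_-$-endpoint, since $\alpha$ is a spanning arc of $A$. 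Thus the sub-collection $\calA_{NS} \subset \calA$ of annuli containing the arcs of $\hat{\alpha}_{NS}$ is a family of spanning annuli in $\hat{C}$ whose $\bdd_- \hat{C}$ ends are all essential by hypothesis (b). The aspherical Lemma \ref{lemma:annulus}, applied in $\hat{C}$, then gives a complete meridian collection $\hat{\Delta}$ for $\hat{C}$ realizing $\hat{\alpha}_{NS}$ as a vertical family; a standard push-off arranges $\hat{\Delta} \subset C$ (avoiding the capping balls).

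Finally, enlarge $\hat{\Delta}$ to a snug collection $\Delta = \hat{\Delta} \cup \{D_F\}$ for $C$ as in Lemma \ref{lemma:fillsnug}. For each sphere $F$ of $\bdd_- C$ incident to the unique arc $\alpha_F \in \hat{\alpha}_S$, build $D_F$ as the properly embedded disk portion of the frontier of a small regular neighborhood of $F \cup \alpha_F$, so that $\alpha_F$ lies inside the sphere collar cut off by $D_F$; for a sphere with no incident arc, use any auxiliary spanning arc to build $D_F$ the same way. Choosing all these regular neighborhoods sufficiently small makes the $\{D_F\}$ mutually disjoint, disjoint from $\hat{\Delta}$, and disjoint from every arc of $\hat{\alpha}$ (the other arcs are disjoint from $F \cup \alpha_F$ in the first place). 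The collar components of $C - \Delta$ then split into two types: sphere collars, each a product $F \times I$ containing at most the single arc $\alpha_F$, which is trivially isotopic to a vertical fiber; and non-sphere collars, inherited from $\hat{C} - \hat{\Delta}$, in which $\hat{\alpha}_{NS}$ is already vertical. Thus $\Delta$ witnesses the verticality of $\hat{\alpha}$ in $C$.

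The main obstacle, and the reason hypothesis (a) is essential, is that a snug disk $D_F$ associated to an arc $\alpha_F$ necessarily contains $\alpha_F$ inside its collar of $F$; any second arc of $\hat{\alpha}$ incident to the same sphere would then be forced to cross $D_F$, wrecking disjointness of $\Delta$ from $\hat{\alpha}$. The at-most-one-arc-per-sphere condition eliminates this conflict and permits a clean separation between the sphere-incident arcs, handled directly by snug disks, and the remainder, handled by the aspherical lemma in $\hat{C}$.
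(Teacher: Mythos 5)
Your overall strategy matches the paper's: apply the aspherical Lemma \ref{lemma:annulus} in $\hat{C}$ to handle arcs incident to non-sphere components, then enlarge the resulting meridian collection to a snug collection for $C$ using the sphere-incident arcs of $\hat{\alpha}$ themselves to build the snug disks. The first bullet and the closing remarks about hypothesis (a) are fine.

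There is, however, a genuine gap in the second bullet. The collection $\hat{\Delta}$ you obtain from Lemma \ref{lemma:annulus} applied in $\hat{C}$ is arranged to be disjoint from $\hat{\alpha}_{NS}$, but nothing in that lemma controls its position relative to the sphere-incident arcs $\hat{\alpha}_S$, and in general $\hat{\Delta}$ will pierce them. Your claim that ``choosing all these regular neighborhoods sufficiently small makes the $\{D_F\}$\dots disjoint from $\hat{\Delta}$'' is then false: if $\alpha_F$ crosses a disk of $\hat{\Delta}$ transversally, every snug disk $D_F$ built from a regular neighborhood of $F\cup\alpha_F$ must intersect that disk, no matter how thin the neighborhood, since $D_F$ encircles $\alpha_F$. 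More to the point, even before constructing $D_F$, the arc $\alpha_F$ already meets $\hat{\Delta}$, so the final $\Delta$ cannot witness verticality of $\hat{\alpha}$. The missing step (which the paper supplies explicitly) is to first modify $\hat{\Delta}$ so it is disjoint from each $\alpha_F$: push a neighborhood of each intersection point along $\alpha_F$ and over the sphere $F$. This is not an isotopy of $\hat{\Delta}$ in $C$ (it pops across $F$), but it produces a new collection of disks in $C$, still a complete collection for $\hat{C}$, now disjoint from all of $\hat{\alpha}$. Only after this adjustment does the snug-disk enlargement go through.
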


\begin{proof} The proof of the first statement is unchanged.  

For the second, observe that by Lemma \ref{lemma:annulus} there is a collection $\hat{\Delta}$ of meridian disks in $\hat{C}$ so that $\hat{\Delta}$ is disjoint from each arc $\alpha \in \hat{\alpha}$ that is incident to a non-spherical component of $\bdd_- C$.  By general position, $\hat{\Delta}$ can be taken to be disjoint from the balls $C - \hat{C}$ and so lie in $C$.  

Now consider an arc $\aaa' \in \hat{\alpha}$ that is incident to a sphere $F$ in $\bdd_- C$.  It may be that $\hat{\Delta}$ intersects $\aaa'$.  In this case, push a neighborhood of each point of intersection along $\alpha'$ and then over $F$.  Note that this last operation is not an isotopy of $\hat{\Delta}$ in $C$, since it pops across $F$, but that's unimportant - afterwards the (new) $\hat{\Delta}$ is completely disjoint from $\aaa'$.  Repeat the operation for every component of $\hat{\alpha}$ that is incident to a sphere in $\bdd_- C$, so that $\hat{\Delta}$ is disjoint from all of $\hat{\alpha}$.  Now apply the proof of Lemma \ref{lemma:fillsnug}, expanding $\hat{\Delta}$ by adding a snug meridian disk for each sphere in $\bdd_-C$, using the corresponding arc in $\hat{\alpha}$ to define the snug meridian disk for spheres that are incident to $\hat{\alpha}$.
\end{proof}

\begin{prop}[cf. Corollary \ref{cor:annulidiskdis}] \label{prop:saannulisdiskdis} Suppose $(\calD, \bdd \calD) \subset (C, \bdd_+C)$ is an embedded  family of disks that is disjoint from an embedded family of vertical annuli $\calA$ in $C$. Then there is a complete %{\bf snug} 
collection of meridian disks for $C$ that is disjoint from $\calA \cup \calD$.
\end{prop}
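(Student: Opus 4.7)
The plan is to follow the proof of Corollary \ref{cor:annulidiskdis}, substituting Lemma \ref{lemma:sannulus} for Lemma \ref{lemma:annulus} and invoking the snug disk apparatus from Section \ref{sect:vertical2} to absorb the sphere components of $\bdd_- C$. Let $\hat{\alpha} \subset \calA$ be a vertical family of spanning arcs, one per annulus of $\calA$, as in Definition \ref{defin:vertann}, and write $\hat{\alpha} = \hat{\alpha}_{ns} \sqcup \hat{\alpha}_s$ according to whether each arc is incident to a non-spherical or to a spherical component of $\bdd_- C$. First I would apply the second part of Lemma \ref{lemma:sannulus} to $\hat{\alpha}_{ns}$ to obtain a complete collection $\hat{\Delta}$ of meridian disks for $\hat{C}$ disjoint from $\hat{\alpha}_{ns}$, then use the push-across-the-sphere move from the proof of Lemma \ref{lemma:sannulus} to arrange $\hat{\Delta}$ disjoint from all of $\hat{\alpha}$. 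Finally, Lemma \ref{lemma:fillsnug} extends $\hat{\Delta}$ to a snug complete collection $\Delta$ for $C$, with the snug disk associated to each sphere $F$ incident to some arc of $\hat{\alpha}_s$ built from that arc.

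With $\Delta$ disjoint from $\hat{\alpha}$ in hand, I would reduce $|\Delta \cap (\calA \cup \calD)|$ to zero by innermost-disk and outermost-arc surgeries, as in the proofs of Proposition \ref{prop:annulusdisjoint} and Corollary \ref{cor:annulidiskdis}. All arcs of intersection have both endpoints on $\bdd_+ C$ (since $\bdd \Delta, \bdd \calD \subset \bdd_+ C$), so outermost-arc arguments in $\calD$ and in each annulus remove them. Inessential circles of $\Delta \cap (\calA \cup \calD)$ are removed in the usual way: an innermost such circle $c$ bounds a disk $E'$ in $\calA \cup \calD$ and a subdisk $E$ of $\Delta$, and the $2$-sphere $E \cup E' \subset C \subset \hat{C}$ bounds a ball in $\hat{C}$ by Proposition \ref{prop:comirred}, yielding an intersection-reducing isotopy of $\Delta$. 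Essential circles of $\Delta \cap \calA$ lying on an annulus whose $\bdd_- C$-end is on a non-spherical component are ruled out by the incompressibility argument used in Lemma \ref{lemma:sannulus}.

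The main obstacle, and the feature genuinely new to the non-aspherical setting, is a circle $c$ of $\Delta \cap \calA$ that is essential in an annulus $A \in \calA$ with end on a sphere $F \subset \bdd_- C$. Such $c$ is parallel in $A$ to $A \cap F$, so if $c$ is chosen innermost in $A$, the sub-annulus $A'$ of $A$ between $c$ and $A \cap F$ has interior disjoint from $\Delta$. Capping $A'$ with one of the two disks in $F$ bounded by $A \cap F$ yields a disk $E'$ in $C$ with $\bdd E' = c$ and interior disjoint from $\Delta$; together with the subdisk $E$ of $\Delta$ bounded by $c$, this produces a $2$-sphere in $C$ bounding a ball in $\hat{C}$ by Proposition \ref{prop:comirred}. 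If this ball lies in $C$, a standard isotopy of $\Delta$ across it eliminates $c$; otherwise the ball contains the $3$-ball of $\hat{C} - C$ attached at $F$, in which case a push-across-$F$ move as in the proof of Lemma \ref{lemma:sannulus} completes the isotopy. Iterating these reductions removes all intersections and produces the desired complete collection of meridian disks for $C$ disjoint from $\calA \cup \calD$.
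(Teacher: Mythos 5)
Your overall strategy matches the paper's in spirit: obtain a complete collection $\Delta$ disjoint from the vertical spanning arcs $\hat\alpha$, then remove intersections with $\calA\cup\calD$ by innermost-disk/outermost-arc surgery. But the details diverge, and there are two problems.

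The ``main obstacle'' in your last paragraph is spurious. You have arranged $\Delta$ disjoint from $\hat\alpha$, and by Definition \ref{defin:vertann} every annulus $A\in\calA$ contains a spanning arc of $\hat\alpha$. A circle in $\Delta\cap A$ is therefore disjoint from that spanning arc, but a core (= essential, $\bdd$-parallel) circle in a spanning annulus separates its two ends and must cross every spanning arc. Hence no circle of $\Delta\cap\calA$ can be essential in $\calA$, regardless of whether the $\bdd_- C$-end of the annulus is a sphere or not, and no incompressibility argument is needed in either case. The paper notes this immediately: $\Delta$ ``intersects $\calA$ only in inessential circles, and in arcs with both ends incident to the end of $\calA$ at $\bdd_+ C$.'' The scenario you devote your third paragraph to cannot occur.

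Conversely, you do not address the genuine new difficulty, which is that the surgeries may fail to be isotopies of $\Delta$ \emph{in $C$}, so it is not automatic that the end product is a complete collection \emph{for $C$}. Your inessential-circle step appeals to a ball in $\hat C$; if that ball contains one of the filled-in $3$-balls of $\hat C - C$, the move pops $\Delta$ across a sphere of $\bdd_- C$, and the resulting collection need no longer cut off a collar of that sphere — completeness in $C$ is lost, and you never recover it. The paper confronts this by introducing the intermediate compression body $C'$ (fill only the sphere components of $\bdd_- C$ \emph{not} incident to $\calA$), carrying out the reduction by isotopy in $C'$ (an isotopy there does preserve completeness in $C'$), and only at the end invoking Lemma \ref{lemma:fillsnug} to add snug disks for precisely the spheres that were filled, thereby restoring completeness in $C$. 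As written, your argument jumps straight from ``intersections are gone'' to ``the collection is complete for $C$,'' and that step has a gap.
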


\begin{proof} Let $\alpha \subset \calA$ be a vertical family of spanning arcs as given in Definition \ref{defin:vertann}.  This means there is a complete collection $\Delta$ of meridian disks for $C$ that is disjoint from $\alpha$, so $\Delta$ intersects $\calA$ only in inessential circles, and in arcs with both ends incident to the end of $\calA$ at $\bdd_+ C$.  

Let $C'$ be the compression body obtained by attaching a ball to each sphere component of $\bdd_- C$ that is {\em not incident to $\calA$}.  Because $\Delta$ is a complete collection in $C$, it is also a complete collection in $C'$, since attaching a ball to a collar of a sphere just creates a ball.  Consider the curves $\Delta \cap (\calA \cup \calD)$, and proceed as usual, much as in Phase 2 of the proof of Proposition \ref{prop:snugdisjoint}:  

If there are circles of intersection, an innermost one in $\Delta$ cuts off a disk $E \subset \Delta$ and a disk $E' \subset (\calA \cup \calD)$ which together form a sphere whose interior is disjoint from $\calA$ and so bounds a ball in $C'$.  In $C'$, $E'$ can be isotoped across $E$, reducing $|\Delta \cap (\calA \cup \calD)|$.  On the other hand, if there are no circles of intersecton, then an arc of intersection $\gamma$ outermost in $\calA \cup \calD$ cuts off a disk $E' \subset (\calA \cup \calD)$ and a disk $E \subset \Delta$ which together form a properly embedded disk $E''$ in $C' - \Delta$ whose boundary lies on $\bdd_+ C$.  Since $E''$ lies in $C' - \Delta$ it lies in a collar of $\bdd_- C'$ and so is parallel to a disk in the other end of the collar.  (If the relevant component of $\bdd_- C'$ is a sphere, we may have to reset $E$ to be the other half of the disk in $\Delta$ in which $\gamma$ lies to accomplish this.) The disk allows us to slide $E$ past $E'$ and so reduce $|\Delta \cap (\calA \cup \calD)|$.  

The upshot is that eventually, with slides and isotopies, $\Delta$ can be made disjoint from $\Delta \cap (\calA \cup \calD)$ {\em in $C'$}.  The isotopies themselves can't be done in $C$, since sphere boundary components {\em disjoint from $\calA$} may get in the way, but the result of the isotopy shows how to alter $\Delta$ (not necessarily by isotopy) to a family of disks $\Delta'$ disjoint from $\calA \cup \calD$ that is complete in $C'$.  Now apply the argument of Lemma \ref{lemma:fillsnug}, adding a snug disk to $\Delta'$ for each sphere component of $\bdd_- C$ that was not incident to $\calA$ and so bounded a ball in $C'$. These additional snug disks, when added to $\Delta'$, create a complete collection of meridian disks for $C$ that is disjoint from $\calA \cup \calD$, as required.
\end{proof}

\begin{prop}[cf. Proposition \ref{prop:vertunique}] \label{prop:svertunique} Suppose $\alpha, \beta$ are vertical arcs  in $C$ with endpoints $p, q$ in a component $F \subset \bdd_-C$.
%, where $F$ is not a sphere.  
Then $\alpha$ and $\beta$ are properly isotopic in $C$.  
\end{prop}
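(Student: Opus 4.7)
My plan is to split into two cases based on whether $F$ is a sphere.

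When $F$ has positive genus, I would reuse the proof of Proposition~\ref{prop:vertunique}, merely substituting Lemma~\ref{lemma:sannulus} for Lemma~\ref{lemma:annulus}.  Choose simple closed curves $c_\alpha, c_\beta \subset F$ with $p \in c_\alpha$, $q \in c_\beta$, meeting in a single point; this is possible since the genus of $F$ is at least one.  Lemma~\ref{lemma:sannulus}, applied separately to each singleton family $\{\alpha\}$ and $\{\beta\}$, produces spanning annuli $A_\alpha \supset \alpha$ and $A_\beta \supset \beta$ with $A_\alpha \cap F = c_\alpha$ and $A_\beta \cap F = c_\beta$.  Its hypotheses are automatic: a single-arc family is vacuously incident to at most one sphere component, and $c_\alpha, c_\beta$ are essential in the non-spherical $F$.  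The unique spanning arc $\gamma \subset A_\alpha \cap A_\beta$ then bridges $\alpha$ to $\beta$ through $\gamma$ via proper isotopies within the two annuli, exactly as in the aspherical argument.

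The interesting case is $F$ a sphere, where that strategy fails because no two simple closed curves on a sphere can intersect essentially in a single point.  Instead I plan to appeal to Lemma~\ref{lemma:Zupan}.  First, apply an ambient isotopy of $C$ supported in a collar of $F$ that carries $q$ to $p$ along $F$, so that we may assume $p = q$.  Next, temporarily attach a $3$-ball $B$ to $F$, forming a compression body $C' = C \cup B$ in which the point $p$ now lies in the interior.  Lemma~\ref{lemma:Zupan} applies in $C'$ directly to $\alpha$ and $\beta$, both regarded as arcs from $\bdd_+ C' = \bdd_+ C$ to the common interior point $p$, yielding a proper isotopy of $\beta$ to $\alpha$ in $C'$ fixing $p$.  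Notice that in this case the vertical hypothesis on $\alpha, \beta$ is not actually needed; any two arcs with the specified endpoints will do.

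The final step is to push this isotopy out of $B$ back into $C$, following Phase 1 of the proof of Proposition~\ref{prop:snugdisjoint} verbatim: choose a center $b \in \inter(B)$; by general position arrange the isotopy to miss $b$, and by the light-bulb trick arrange it to never cross the radial segment from $b$ to $p$ except at $p$; then radial projection from $b$ carries the portion of the isotopy lying in $B$ out onto $F$, producing the desired isotopy in $C$.  I expect this last step to be the main technical obstacle: one must verify that the radial projection stays injective when restricted to each time-slice arc, which is exactly what the light-bulb trick guarantees.  Since the identical mechanism is used and trusted in Phase 1 of Proposition~\ref{prop:snugdisjoint}, the argument should carry over with no essential additional effort.
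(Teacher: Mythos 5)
Your proof is correct and follows essentially the same route as the paper's, whose entire proof is the terse ``If $F$ is not a sphere, apply the argument of Proposition \ref{prop:vertunique}. If $F$ is a sphere, apply Lemma \ref{lemma:Zupan}.'' You have correctly reconstructed both halves: in the non-sphere case, swapping Lemma \ref{lemma:annulus} for Lemma \ref{lemma:sannulus} (though only the first bullet is used, and its proof is unchanged), and in the sphere case, unpacking ``apply Lemma \ref{lemma:Zupan}'' into the ball-attachment-plus-radial-projection mechanism that the paper spells out in Phase 1 of Proposition \ref{prop:snugdisjoint}.
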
 

\begin{proof}  If $F$ is not a sphere, apply the argument of Proposition \ref{prop:vertunique}.  If $F$ is a sphere, apply Lemma \ref{lemma:Zupan}. 
\end{proof}

 \begin{prop}[cf. Lemma \ref{lemma:vertdodge}] \label{prop:vertdodge} Suppose $(\calD, \bdd \calD) \subset (C, \bdd_+C)$ is an embedded  family of disks that is disjoint from an embedded family of vertical annuli $\calA$ in $C$.  Suppose $\hat{\beta} = \{\bbb_i\}, i = 1, 2$ is a vertical family of arcs in $C$ whose end points $p_i \in \bdd_- C$ are disjoint from the family of circles $\hat{a} =  \calA \cap \bdd_- C$ in $\bdd_- C$.  Then $\bbb$ can be properly isotoped rel  $\{p_i\}$ so that it is disjoint from $\calA \cup \calD$.
\end{prop}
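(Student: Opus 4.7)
My plan is to mimic the proof of Lemma \ref{lemma:vertdodge}, replacing each aspherical ingredient with the analogue established in this section. First I would apply Proposition \ref{prop:saannulisdiskdis} to obtain a complete collection $\Delta$ of meridian disks for $C$ disjoint from $\calA \cup \calD$ and, using Lemma \ref{lemma:fillsnug}, extend it to a snug collection. This places $\calA$ in the collar of $\bdd_- C$ in $C - \eta(\Delta)$ as a family of vertical annuli.

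When both $p_1, p_2$ lie on non-sphere components of $\bdd_- C$, the proof of Lemma \ref{lemma:vertdodge} transfers almost verbatim, using Lemma \ref{lemma:sannulus} in place of Lemma \ref{lemma:annulus}. The only new subtlety lies in the essential-circle analysis of $B \cap \calA$: an annulus $A \in \calA$ may end on a sphere of $\bdd_- C$, in which case a circle essential in $A$ is homotopic in $C$ to the null-homotopic end of $A$ on that sphere, hence null-homotopic in $C$. Such a circle therefore cannot be essential in $B$, whose essential class is represented by the chosen non-separating curve $b$, itself essential in $C$ because the component of $\bdd_- C$ containing $b$ is not a sphere. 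With this addressed, both the easy subcase $b \cap \hat{a} = \emptyset$ and the dodge subcase using the auxiliary annulus $X$ go through as before.

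The new case is when at least one endpoint, say $p_1$, lies on a sphere $F_1$ of $\bdd_- C$. Since $F_1$ admits no essential simple closed curves, the single-annulus strategy of Lemma \ref{lemma:vertdodge} is unavailable. Instead I would construct a target vertical arc $\bbb_1'$ directly. In the snug collar of $F_1$ inside $C - \eta(\Delta)$, take $\bbb_1' = \{p_1\} \times I$: since $p_1 \notin \hat{a}$, this arc is automatically disjoint from $\calA$; and since any disk of $\calD$ meeting this collar is $\bdd$-parallel there, one can further choose the upper endpoint of $\bbb_1'$ on $\bdd_+ C$ to miss the balls cut off by these $\bdd$-parallel disks, so that $\bbb_1'$ is disjoint from $\calA \cup \calD$. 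A proper isotopy of $\bbb_1$ to $\bbb_1'$ fixing $p_1$ is then supplied by Lemma \ref{lemma:Zupan} (after temporarily capping $F_1$ with a ball and using radial projection, as in Phase 1 of Proposition \ref{prop:snugdisjoint}, to return the isotopy to $C$). I would treat $\bbb_2$ similarly, using Lemma \ref{lemma:Zupan} again if $p_2$ also lies on a sphere, or the non-sphere argument of the previous paragraph otherwise.

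The principal obstacle I anticipate is executing these two isotopies simultaneously without producing persistent intersections between $\bbb_1$ and $\bbb_2$. When $p_1, p_2$ lie in disjoint snug collars this is automatic; the delicate situation is when both endpoints lie on the same sphere $F$, where I expect to exploit the light-bulb trick underlying Lemma \ref{lemma:Zupan} to push any would-be crossing of $\bbb_1$ past $\bbb_2$ over $F$ itself, yielding the required disjoint pair of target arcs.
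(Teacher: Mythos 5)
Your treatment of the case where both $p_i$ lie on non-sphere components of $\bdd_- C$ is correct and tracks the paper's Lemma \ref{lemma:vertdodge}; the extra observation about circles of $B\cap \calA$ that are essential in an annulus ending on a sphere is the right adjustment and is sound. Where you diverge from the paper is the sphere case, and there you have a gap.

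You declare $\bbb_1' = \{p_1\}\times I$ in the collar of $F_1$ and assert that, because $p_1\notin \hat a$, this arc is "automatically" disjoint from $\calA$ (and then that $\calD$ can be dodged because its disks are $\bdd$-parallel in the collar). Neither claim is immediate. By Definition \ref{defin:vertann} verticality of $\calA$ is a condition only on a family of \emph{spanning arcs} of $\calA$, not on the annuli themselves; the paper explicitly warns (after Definition \ref{defin:vertann}) that vertical annuli need not be incompressible. So after cutting along the $\Delta$ produced by Proposition \ref{prop:saannulisdiskdis}, the annuli of $\calA$ do indeed lie in the collar of $\bdd_- C$, but there is no reason for them to be of the form $c\times I$ in that collar's product structure: an annulus of $\calA$ incident to $F_1$ (whose end on $F_1$ is necessarily inessential) can wander in $F_1\times I$ and meet the fiber $\{p_1\}\times I$ even though $p_1\notin\hat a$. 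Similarly, a disk of $\calD$ in the collar of $F_1$ is $\bdd$-parallel only after capping $F_1$ with a ball; the parallelism region may contain $F_1$ itself, and then $\{p_1\}\times I$ is forced to cross the disk no matter how you choose its top endpoint. The paper sidesteps both problems by never using a bare product fiber as the target: after Lemma \ref{lemma:Zupan} has made $\hat\beta$ parallel to the vertical spanning arcs of $\calA$ at $F$, the target arc is a small push-off of the concatenation $\sss_i\cup\alpha_i$ with $\alpha_i\subset\calA$ --- such a push-off is disjoint from $\calA\cup\calD$ essentially by definition, with no assumption on how the annuli or disks sit in the collar. Your argument would be repaired by adopting that target; as written, the claim that $\bbb_1'$ misses $\calA\cup\calD$ is unsupported.

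Your final paragraph correctly identifies the simultaneity issue and the light-bulb remedy; the paper handles the same point by moving the (parallel) pair $\hat\beta$ together, which is the cleaner formulation, but your version is not where the difficulty lies.
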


\begin{proof}  The proof, like the statement, is essentially identical to that of Lemma \ref{lemma:vertdodge}, with this alteration when $F \subset \bdd C_-$ is a sphere:  Use Lemma \ref{lemma:Zupan} to isotope the vertical (hence parallel) pair $\hat{\beta}$ rel $p_i$ until the arcs are parallel to the vertical family of spanning arcs of $\calA$ that are incident to $F$.  In particular, we can then take $\hat{\beta}$ to lie in the same collar $F \times I$ as $\calA$ does, and to be parallel to $\calA$ in that collar.  It is then a simple matter, as in the proof of Lemma \ref{lemma:vertdodge}, to isotope each arc in $\hat{\beta}$ rel $p_i$ very near to the concatenation of arcs $\sss_i$ disjoint from $\calA$ and arcs $\alpha_i$ in $\calA$ and, once so positioned, to push $\hat{\beta}$ off of $\calA \cup \calD$. \end{proof}

Let us now return to the world and language of Heegaard splittings with a lemma on verticality, closely related to $\bdd$-reduction of Heegaard splittings.  

 Suppose $M = A \cup_T B$ is a Heegaard splitting of a compact orientable $3$-manifold $M$ and $(E, \bdd E) \subset (M, \bdd_-B)$ is a properly embedded disk, intersecting $T$ in a single circle, so that the annulus $E \cap B$ is vertical in $B$ and the disk $E \cap A$ is essential in $A$.  Since $E \cap B$ is vertical, there is a complete collection of meridian disks $\Delta$ in the compression body $B$ so that a component $N$ of $B - \Delta$ is a collar of $\bdd_- B$ in which $E \cap B$ is a vertical annulus.  Parameterize $E$ as a unit disk with center $b \in E \cap A$ and $E \cap B$ the set of points in $E$ with radius $\frac12 \leq r \leq 1$.  Let $\rho$ be a vertical radius of $E$, with $\rho_A$ the half in the disk $E \cap A$ and $\rho_B$ the half in the annulus $E \cap B$.  

Let $E \times [-1, 1]$ be a collar of the disk $E$ in $M$ and consider the manifold $M_0= M - (E \times (-\epsilon, \epsilon))$, the complement of a thinner collar of $E$.  It has a natural Heegaard splitting, obtained by moving the solid cylinders $(E \cap A) \times (-1, - \epsilon]$ and $(E \cap A) \times [\epsilon, -1)$ from $A$ to $B$.  Classically, this operation (when $E$ is essential) is called $\bdd$-reducing $T$ along $E$ \cite[Definition 3.5]{Sc1}.  We denote this splitting by $M_0 = A_0 \cup_{T_0} B_0$, recognizing that if $E$ is separating, it describes a Heegaard splitting of each component.   Denote the spanning arcs $b \times [-1, -\epsilon]$ and $b \times [\epsilon, 1]$ in $B_0$ by $\beta_-$ and $\beta_+$ respectively.  See the top two panes of Figure \ref{fig:vertbeta}, with a schematic rendering below.

    \begin{figure}[th]
     \labellist
   \small\hair 2pt
\pinlabel  $T$ at 175 175
\pinlabel  $T_0$ at 230 175
\pinlabel  $E$ at 90 110
\pinlabel  $N\subset B$ at 20 110
\pinlabel  $A$ at 20 150
\pinlabel  $F$ at 20 202
\pinlabel  $F_0$ at 250 205
\pinlabel  $b$ at 90 140
\pinlabel  $M$ at 180 150
\pinlabel  $M_0$ at 230 150
\pinlabel  $\bbb_-$ at 80 27
\pinlabel  $\bbb_+$ at 110 27
\pinlabel  $\rho_B$ at 100 60
\pinlabel  $\rho_A$ at 97 160
%\pinlabel  $0$ at 430 140
\endlabellist
    \centering
    \includegraphics[scale=0.9]{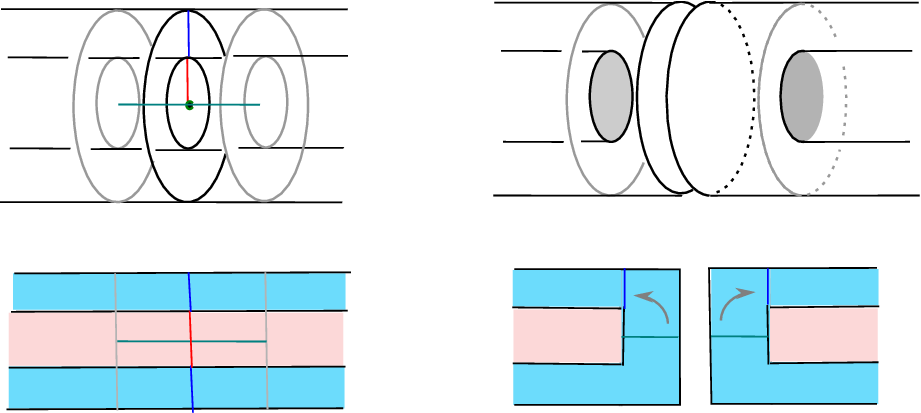}
\caption{} 
 \label{fig:vertbeta}
    \end{figure}

\begin{lemma} \label{lemma:vertbeta}
The spanning arcs $\beta_{\pm}$ are a vertical family of arcs in $B_0$.
\end{lemma}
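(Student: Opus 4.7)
The plan is to verify the verticality of $\{\beta_+,\beta_-\}$ directly from Definition \ref{defin:vertical} by reusing the very same complete collection $\Delta$ that witnesses the verticality of $E\cap B$ in $B$. Since $E\cap B$ is vertical in $B$, I may take $\Delta$ disjoint from a bicollar of $E\cap B$ in $B$, and hence disjoint from the removed slab $(E\cap B)\times(-\epsilon,\epsilon)$. Then $\Delta$ lies in $B-\eta(E\cap B)\subset B_0$ as a family of properly embedded disks, with boundaries on the portion of $T$ surviving to $T_0$. The two claims to verify are that $\Delta$ is a complete collection for $B_0$ and that $\beta_\pm$ become vertical fibers of the resulting collar of $\bdd_-B_0$.

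The key computation is to identify the collar component of $B_0-\eta(\Delta)$. Start from $B-\eta(\Delta)=N\sqcup(\mathrm{balls})$, where $N=F\times I$, $F=\bdd_-B$, and $E\cap B=\bdd E\times I$ is vertical. Removing the bicollar of $E\cap B$, which in the product structure of $N$ is $\eta(\bdd E)\times I$, turns $N$ into $N':=F'\times I$, a collar of $F':=F-\eta(\bdd E)$; the old balls are unaffected. Finally I reattach the two tubes $(E\cap A)\times[\pm\epsilon,\pm 1]$ along the annuli $\bdd(E\cap A)\times[\pm\epsilon,\pm 1]$, which sit on the top face $F'\times\{1\}$ of $N'$. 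The claim is that $N_0:=N'\cup\mathrm{tubes}$ is a collar of $\bdd_-B_0=F'\cup E^+\cup E^-$, where the cap disks $E^\pm=E\times\{\pm\epsilon\}$ fill in the cut boundary $\bdd E\times\{\pm\epsilon\}$ of $F'$.

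To see the collar identification, I observe that, under the identification of the bicollar of $E\cap B$ with $\bdd E\times I\times[-1,1]$, the sides $\bdd E\times\{\pm\epsilon\}\times I$ of $N'$ correspond to the annular pieces $(E\cap B)\times\{\pm\epsilon\}$ of the caps $E^\pm$, which lie in $\bdd_-B_0$; the tubes then supply the collars of the central disks $(E\cap A)\times\{\pm\epsilon\}$ of $E^\pm$ via their natural product structures. Patching these pieces together presents $N_0$ as $\bdd_-B_0\times I$, and the old balls reappear as the $3$-ball components of $B_0-\eta(\Delta)$, so $\Delta$ is complete in $B_0$. Under this product structure, $\beta_+=b\times[\epsilon,1]$ is the fiber over $p_+=b\times\{\epsilon\}\in(E\cap A)\times\{\epsilon\}\subset E^+$, ending at $b\times\{1\}$ on the cap $(E\cap A)\times\{1\}\subset T_0$, a point of $\bdd_+B_0$ away from any $1$-handle attaching disks (those descend from old $1$-handles of $B$ and sit above $F'$). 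The same reasoning covers $\beta_-$, and $\beta_\pm\cap\Delta=\emptyset$ because $\Delta\subset B$ while the tubes live in what used to be $A$.

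The main obstacle I anticipate is the mismatch between the two product directions meeting near the cut: at the circle $\bdd E\times\{\pm\epsilon\}$ where $F'$ abuts $E^\pm$ in $\bdd_-B_0$, the collar direction $I$ of $N=F\times I$ and the bicollar direction $[-1,1]$ of $E$ in $M$ are transverse. The two local collars $F'\times I$ and $E\times[\pm\epsilon,\pm 1]$ therefore meet at a corner that has to be smoothed in order to present $N_0$ as a single product. The reconciling observation, implicit in the placement of the bicollar of $E\cap B$ inside $N$ as $\eta(\bdd E)\times I$, is that the bicollar $t$-coordinate and the $F$-direction in which $\bdd E$ has been thickened are, up to reparametrization, the same coordinate; once this identification is made, the two product structures glue seamlessly and the corner smoothing is routine.
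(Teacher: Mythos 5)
Your proof is correct and follows essentially the same approach as the paper's: reuse the complete collection $\Delta$ of meridian disks for $B$ (chosen disjoint from $E\cap B$), observe that removing the slab and adjoining the tubes turns the collar component $N=F\times I$ into a collar $N_0$ of $\bdd_- B_0$ with the balls and $1$-handles of $B$ unaffected, and then recognize $\beta_\pm$ as fibers of that collar. The paper conveys the last step more impressionistically --- it exhibits the rectangles $\rho\times[\epsilon,1]$ and $\rho\times[-1,-\epsilon]$ as isotopies of $\beta_\pm$ to vertical arcs of the old collar, without spelling out the extended product structure --- whereas you make the new product structure explicit, including the needed corner smoothing where the $s$-collar direction and the bicollar $t$-direction must be reconciled near $\bdd E\times\{\pm\epsilon\}$. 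This is more careful than the paper's one-paragraph proof but is the same argument.
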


\begin{proof} The complete collection of meridian disks $\Delta$ for $B$ is disjoint from the annulus $E \cap B$ so remains in $B_0$.  Viewed in the collar component $N \cong (F \times I)$ in the complement of $\Delta$ to which $E \cap B$ belongs, the operation described cuts the component $F \subset \bdd_- B$ by $\bdd E \subset F$, then caps off the boundary circles by disks to get a new surface $F_0$ and extends the collar structure to $F \times I$.  The rectangles $\rho \times [\epsilon, 1]$ and $\rho \times  [-1, -\epsilon]$ provide isotopies in $M_0$ from $\bbb_{\pm}$ to the vertical arcs $\rho_B \times \{\pm 1\}$, illustrating that $\bbb_{\pm}$ is a vertical family.
\end{proof}

\section{Reducing Theorem \ref{thm:main} to the case $S$ connected} \label{sect:connreduce}

To begin the proof of Theorem \ref{thm:main} note that (unsurprisingly) we may as well assume each component of $S$ is essential, that is no sphere in $S$ bounds a ball and no sphere or disk in $S$ is $\bdd$-parallel.  This can be accomplished simply by isotoping all inessential components well away from $T$.
So henceforth we will assume all components of $S$ are essential, including perhaps disks whose boundaries are inessential in $\bdd M$ but which are not $\bdd$-parallel in $M$.

Assign a simple notion of complexity $(g, s)$ to the pair $(M, T)$, with $g$ the genus of $T$ and $s$ the number of spherical boundary components of $M$.  We will induct on this pair, noting that there is nothing to prove if $g = 0$ and $s \leq 2$.  

Suppose then that we are given a disk/sphere set $(S, \bdd S) \subset (M, \bdd M)$ in which all components are essential. We begin with

\begin{ass}(Inductive assumption) \label{ass:induct} 
Theorem \ref{thm:main} is true for Heegaard splittings of manifolds that have lower complexity than that of $(M, T)$.
\end{ass}

With this inductive assumption we have:

\begin{prop}  \label{prop:S0} It suffices to prove Theorem \ref{thm:main} for a single component $S_0$ of $S$. 
\end{prop}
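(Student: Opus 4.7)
The plan is to apply Assumption \ref{ass:induct}. Given a multi-component disk/sphere set $S$, pick any component $S_0$ and apply the connected case of Theorem \ref{thm:main} (the very case being reduced to) to isotope $T$ so that $T$ is aligned with $S_0$ alone. After this isotopy, $S_0\cap T$ consists of at most one circle, and any annulus components of $S_0\cap A$ or $S_0\cap B$ are vertical in the relevant compression body.

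Next, use $S_0$ to produce a Heegaard splitting $(M^*,T^*)$ of strictly smaller complexity $(g,s)$. If $S_0\cap T=\emptyset$, then $S_0$ is an essential sphere or disk lying entirely in $A$ or in $B$; cut $M$ along $S_0$ and cap each new boundary piece with a ball (or half-ball, in the disk case). The genus of the Heegaard surface is unchanged but $s$ strictly decreases, and $M$ may split into pieces of strictly smaller complexity. If $S_0\cap T$ is a single circle, then $S_0$ is a reducing sphere or $\partial$-reducing disk for $T$; $\partial$-reduce the splitting along $S_0$ as described in Lemma \ref{lemma:vertbeta}, producing $(M^*,T^*)$ of strictly smaller $g$. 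In each case $S-S_0$ is disjoint from $S_0$, and so embeds in $M^*$ as a disk/sphere set $S^*$.

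By Assumption \ref{ass:induct} applied to $(M^*,T^*)$, there is an isotopy of $T^*$ in $M^*$ that aligns $T^*$ with $S^*$, and moreover arranges the annular pieces of $S^*\cap A^*$ and $S^*\cap B^*$ into vertical families. Choose this isotopy to be supported away from the capping balls: any stray piece of $T^*$ drifting into a cap, which is a $3$-ball, can be pushed back out by a small extra local isotopy. Reversing the cap-and-cut construction then lifts the isotopy back to an isotopy of $T$ in $M$ supported away from $S_0$, so the alignment of $T$ with $S_0$ produced in the first step survives. Combined, $T$ becomes aligned with every component of $S$, including vertical-family alignment of the annulus components in $A$ and $B$.

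The main obstacle is the final lifting step: verifying that verticality of the annular components of $S^*\cap A^*$ and $S^*\cap B^*$ in $M^*$ really corresponds to verticality of the matching annular components of $(S-S_0)\cap A$ and $(S-S_0)\cap B$ after the modification is undone. The disjoint case is essentially routine. In the $\partial$-reducing case, undoing the cut reintroduces the scar arcs $\beta_\pm$ of Lemma \ref{lemma:vertbeta} as a vertical family, and one must use the vertical-arc toolkit of Section \ref{sect:vertical2} — particularly Proposition \ref{prop:vertdodge}, for pushing a vertical family of arcs off a family of vertical annuli and disks — to confirm that the annular pieces of $S-S_0$ remain part of a vertical family in the original compression bodies $A$ and $B$ that is compatible with $S_0$.
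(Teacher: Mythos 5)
Your high-level strategy matches the paper's: cut along an already-aligned $S_0$, induct on the lower-complexity pieces, then reattach using the verticality machinery (Lemma \ref{lemma:vertbeta} plus Proposition \ref{prop:vertdodge}) to push the reattaching arc off $S-S_0$. But the case analysis has a genuine gap. You split into ``$S_0\cap T=\emptyset$'' versus ``$S_0\cap T$ a single circle, hence $S_0$ is a reducing sphere or $\partial$-reducing disk for $T$,'' and in the second case you $\partial$-reduce along $S_0$ to strictly decrease genus. This is false when the circle $S_0\cap T$ is \emph{inessential} in $T$: then $S_0$ is aligned but is neither a reducing sphere nor a $\partial$-reducing disk for $T$, the disk $S_0\cap A$ has boundary bounding a disk in $T$, and compressing $T$ along it does not lower the genus (it splits off a sphere). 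This is exactly why the paper introduces the notion ``aligned'' rather than speaking of reducing spheres, and why its Case~1 carries a separate sub-case for a sphere meeting $T$ in one inessential circle, decomposing $M$ into $M_0$ and a punctured ball $P$, applying the induction to both, and reattaching via the two arcs $\beta_T,\beta_R$ with Proposition \ref{prop:vertdodge} applied on each side. Your argument as written has no mechanism for this case.

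Two smaller points. First, the ``cap with a ball'' step departs from the paper's deliberate choice to retain spherical boundary components in compression bodies; keeping the sphere as a boundary of $B_0$ drops $s$ without introducing extraneous material, so one never has to fret about $T^*$ wandering into a cap. Second, and more significantly, your worry about the isotopy drifting into the cap misplaces the difficulty: the real reattachment hazard is that the tube core $\beta$ (which becomes part of $T$ once the cut is undone) may pass through $S-S_0$. You do identify Lemma \ref{lemma:vertbeta} and Proposition \ref{prop:vertdodge} at the end as the tools for this, which is correct, but they are the heart of the argument rather than a loose end — they are what make the reassembled $T$ aligned with all of $S$.
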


\begin{proof} Let $M = A \cup_T B$ be a Heegaard splitting, $S \subset M$ be a disk/sphere set, in which each component is essential in $M$, and let $S_0$ be a component of $S$ that is aligned with $T$. The goal is to isotope the other components of $S$ so that they are also aligned, using the inductive Assumption \ref{ass:induct}.

{\bf Case 1: $S_0$ is a sphere and $S_0 \cap T = \emptyset$ or an inessential curve in $T$.}

If $S_0$ is disjoint from $T$, say $S_0 \subset B$, then it cuts off from $M$ a punctured ball.  This follows from Proposition \ref{prop:comirred}, which shows that $S_0$ bounds a ball in the aspherical compression body $\hat{B}$ and so a punctured $3$-ball in $B$ itself. Any component of $S - S_0$ lying in the punctured $3$-ball is automatically aligned, since it is disjoint from $T$.  Removing the punctured $3$-ball from $B$ leaves a compression body $B_0$ with still at least one spherical boundary component, namely $S_0$.  The Heegaard split $M_0 = A \cup_T B_0$ is unchanged, except there are fewer boundary spheres in $B_0$ than in $B$ because $S_0$ is essential.  Now align all remaining components of $S - S_0$ using the inductive assumption, completing the construction.

Suppose next that $S_0$ intersects $T$ in a single circle that bounds a disk $D_T$ in $T$, and $S_0$ can't be isotoped off of $T$.  Then $S_0$ again bounds a punctured ball in $M$ with $m \geq 1$ spheres of $\bdd M$ lying in $A$ and $n \geq 1$ spheres of $\bdd M$ lying in $B$.  $S_0$ itself is cut by $T$ into hemispheres $D_A = S_0 \cap A$ and $D_B = S_0 \cap B$.  A useful picture can be obtained by regarding $D_A$ (say) as the cocore of a thin $1$-handle in $A$ connecting a copy $A_+$ of $A$ with $m$ fewer punctures to a boundary component $T_- = D_T \cup D_A$ of an $m$-punctured ball in $A$.  In this picture, $S_0$ and $T_-$ are parallel in $\hat{B}$; the interior of the collar between them has $n$ punctures in $B$ itself.  See Figure \ref{fig:propS01}.  

Let $\bbb$ be the core of the $1$-handle, divided by $S_0$ into a subarc $\bbb_+$ incident to $T_+ = \bdd A_+$ and $\bbb_-$ incident to the sphere $T_-$.  Now cut $M$ along $S_0$, dividing it into two pieces.  One is a copy $M_+ = A_+ \cup_{T_+} B_+$ of $M$, but with $m$ fewer punctures in $A_+$ and $n-1$ fewer in $B_+$ (a copy of $S_0$ is now a spherical boundary component of $B_+$).  The other is an $m + n +1$ punctured $3$-sphere $M_-$, Heegaard split by the sphere $T_-$.  (Neither of the spanning arcs $\bbb_+$ nor $\bbb_-$ play a role in these splittings yet.)

    \begin{figure}[th]
\labellist
\small\hair 2pt
%\pinlabel  $p$ at 85 20
%\pinlabel  $\sss$ at 200 140
%\pinlabel  $T_-$ at 260 120
%\pinlabel  $q$ at 200 100
\pinlabel  $A$ at 30 280
\pinlabel  $B$ at 30 200
\pinlabel  $T$ at 40 245
\pinlabel  $D_A$ at 140 290
\pinlabel  $D_B$ at 135 200
\pinlabel  $D_T$ at 100 245
\pinlabel  $T_-$ at 300 205
\pinlabel  $S_0$ at 345 205
%\pinlabel  $A_+$ at 230 140
\pinlabel  $A_+$ at 130 145
\pinlabel  $B_+$ at 130 90
\pinlabel  $T_+$ at 270 120
\pinlabel  $\bbb$ at 295 95
\pinlabel  $M_+$ at 10 100
\pinlabel  $M_-$ at 80 15
\pinlabel  $\bbb_+$ at 75 110
\pinlabel  $\bbb_-$ at 73 82

%\pinlabel  $0$ at 430 140
\endlabellist
    \centering
    \includegraphics[scale=0.8]{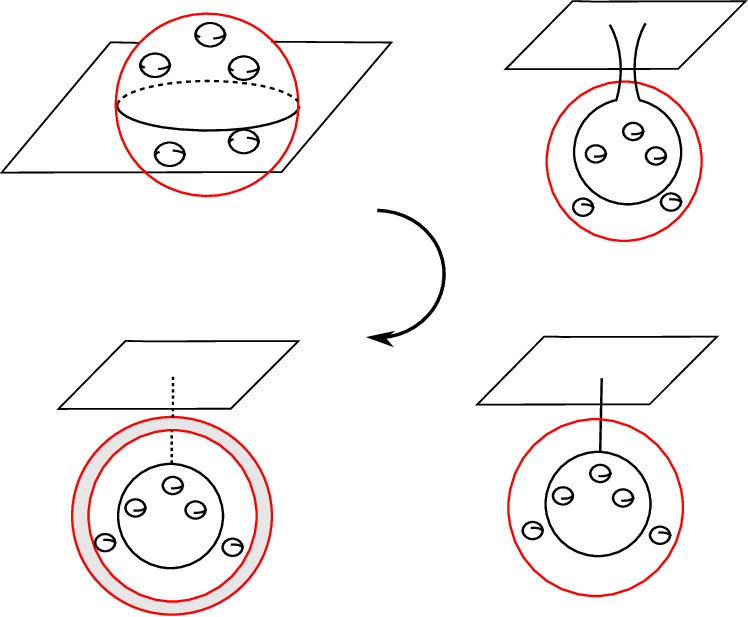}
\caption{Clockwise through inductive step in Case 1} 
 \label{fig:propS01}
    \end{figure}

Now apply the inductive assumption to align $T_+$ and $T_-$ with the disk/sphere set $S - S_0$ (not shown in Figure \ref{fig:propS01}).  Afterwards, reattach $M_+$ to $M_-$ along the copies of $S_0$ in each.  The result is again $M$, and $S$ is aligned with the two parts $T_-$ and $T_+$ in $T$.  But to recover $T$ itself, while ensuring that $S$ remains aligned, we need to ensure that $\beta$ can be properly isotoped rel $S_0$ so that it is disjoint from $S - S_0$. Such a proper isotopy of $\beta$ will determine an isotopy of $T$ by viewing $\beta$ as the core of a tube (the remaining part of $T$) connecting $T_+$ to $T_-$.   But once $S - S_0$ is aligned, the proper isotopy of $\beta$ can be found by first applying Proposition \ref{prop:vertdodge} to $\bbb_+$ and the family $S \cap B_0$ of disks and annuli in the compression body $B_+$ and then proceeding similarly with the arc $\bbb_-$ in $M_-$.  
\medskip

{\bf Case 2: $S_0$ is a sphere that intersects $T$ in an essential curve.}

As in Case 1, $S_0$ is cut by $T$ into hemispheres $D_A = S_0 \cap A$ and $D_B = S_0 \cap B$ and we can consider $D_A$ (say) as the cocore of a thin $1$-handle in $A$.  Continuing as in Case 1, denote the arc core of the $1$-handle by $\bbb$; $S_0$ again divides the arc $\bbb$ into two arcs which we label $\bbb_{\pm}$. 

If $S_0$ separates, then it divides $M$ into two manifolds, say $M_{\pm}$ containing respectively $\bbb_{\pm}$.   Apply the same argument in each that was applied in Case 1 to the manifold $M_+$.  

If $S_0$ is non-separating sphere then we can regard $S - S_0$ as a disk sphere set in the manifold $M_0 = M - \eta(S_0)$.  Since $S_0$ is two-sided, two copies $S_{\pm}$ of $S_0$ appear as spheres in $\bdd M_0$.  Choose the labelling so that each arc $\bbb_{\pm}$ has one end in the corresponding $S_{\pm}$.  $M_0$ has lower complexity (the genus is lower) so the inductive assumption applies, and the spheres in $S - S_0$ can be aligned with $T_0$.  Apply Proposition \ref{prop:vertdodge} to the arcs $\bbb_{\pm}$ and then reconstruct $(M, T)$, now with $T$ aligned with $S$, as in Case 1.
\medskip

{\bf Case 3: $S_0$ is a separating disk.} %, and $\bdd S_0$ is essential in $\bdd M$.}  

Suppose, with no loss of generality, that $\bdd S_0 \subset \bdd_- B$, so $S_0$ intersects $A$ in a separating disk $D_A$ and $B$ in a separating vertical spanning annulus. As in the previous cases, let $M_{\pm}$ be the manifolds obtained from $M$ by cutting along $S_0$, $\bbb$ the core of the $1$-handle in $A$ whose cocore is $D_A$ and $\bbb_{\pm}$ its two subarcs in $M_{\pm}$ respectively.  

The compression body $A - \eta(D_A)$ consists of two compression bodies, $A_{\pm}$ in $M_{\pm}$ respectively.  As described in the preamble to Lemma \ref{lemma:vertbeta}, the complement $B_{\pm}$ of $A_{\pm}$ in $M_{\pm}$ is a compression body, in which $\bbb_{\pm}$ is a vertical spanning arc.  So the surfaces $T_{\pm}$ obtained from $T$ by compressing along $D_A$ are Heegaard splitting surfaces for $M_{\pm}$, and the pairs $(M_{\pm}, T_{\pm})$ have lower complexity than $(M, T)$.

Now apply the inductive hypothesis: Isotope each of $T_{\pm}$ in $M_{\pm}$ so that they align with the components of $S - S_0$ lying in $M_{\pm}$.  As in Case 1, apply Proposition \ref{prop:vertdodge} to each of $\beta_{\pm}$ and then reattach $M_+$ to $M_-$ along disks in $\bdd M_{\pm}$ centered on the points $\bbb_{\pm} \cap \bdd M_{\pm}$ and simultaneously reattach $\bbb_+$ to $\bbb_-$ at those points.  The result is an arc isotopic to $\bbb$ which is disjoint from $S - S_0$.  Moreover, the original Heegaard surface $T$ can be recovered from $T_{\pm}$ by tubing them together along $\bbb$ and, since $\bbb$ is now disjoint from $S - S_0$, all of $T$ is aligned with $S$.
\medskip

{\bf Case 4: $S_0$ is a non-separating disk.}   
 
Near $S_0$ the argument is the same as in Case 3.  Now, however, the manifold $M_0$ obtained by cutting along $S_0$ is connected.  The construction of its Heegaard splitting $M_0 = A_0 \cup_{T_0} B_0$ and vertical spanning arcs $\bbb_{\pm}$ proceeds as in Case 3, and, since $genus(T_0) = genus(T) -1$, we can again apply the inductive hypothesis to align $S - S_0$ with $T_0$.  

If $\bdd S_0$ separates the component $F$ of $\bdd_- B \subset \bdd M$ in which it lies, say into surfaces $F_{\pm}$ the argument concludes just as in Case 3.  If $\bdd S_0$ is non-separating in $F$, then we encounter the technical point that Proposition \ref{prop:vertdodge} requires that $\beta$ be a vertical family of arcs. But this follows from Lemma \ref{lemma:vertbeta}.
\end{proof}

\section{Breaking symmetry: stem swaps} \label{sect:stemswap}

Applications of Lemma \ref{lemma:Zupan} extend beyond Propositions \ref{prop:vertunique} and \ref{prop:vertdodge}.  But the arguments will require {\bf breaking symmetry}: Given a Heegaard splitting $M = A \cup_T B$ of a compact orientable $3$-manifold $M$ and $\Sss$ a spine for $B$, we can, and typically will, regard $B$ as a thin regular neighborhood of $\Sss$, with $T$ as the boundary of that thin regular neighborhood.  This allows general position to be invoked as if $B$ were a graph embedded in $M$.  Edge slides of $\Sss$ can be viewed as isotopies of $T$ in $M$ and therefore typically are of little consequence.  We have encountered this idea in the previous section: the boundary of a tubular neighborhood of an arc $\bbb$ there represented an annulus in $T$; a proper isotopy of $\bbb$ was there interpreted as an isotopy of $T$.  We can then regard $A$ as the closure of $M - \eta(\Sss)$; a properly embedded arc in $A$ then appears as an arc whose interior lies in $M - \Sss$ and whose end points may be incident to $\Sss$.  We describe such an arc as a properly embedded arc in $A$ whose end points lie on $\Sss$.  This point of view is crucial to what follows; without it many of the statements might appear to be nonsense.  

Let $R$ be a sphere component of $\bdd_- B$.  Let $\Sss$ be a spine for $B$ for which a single edge $\sss$ is incident to $R$. 

\begin{defin} \label{defin:stem}
The complex $\sss \cup R$ is called a {\em flower}, with $\sss$ the {\em stem} and $R$ the {\em blossom}.  The point $\sss \cap R$ is the {\em base} of the blossom, and the other end of $\sss$ is the {\em base} of both the stem and the flower.
\end{defin}

Now suppose $\sss'$ is a properly embedded arc in $A$ from the base of the blossom $R$ to a point $p$ in $\Sss - \sss$.  See Figure \ref{fig:swapRev} for an example when $p, q$ lie on edges of the spine.  

\begin{prop}[Stem Swapping]  \label{prop:stemswap}
%Let $\sss$ be a stem with blossom $\mathfrak{a}$ and $\sss' \subset M-\Sss$ be a disjoint arc with one end at the base of $\mathfrak{a}$ and the other at a point in an edge of $\Sss$.  Then t
The complex $\Sss'$ obtained from $\Sss$ by replacing the arc $\sss$ with the arc $\sss'$ is, up to isotopy, also a spine for $B$. That is, $T$ is isotopic in $M$ to the boundary of a regular neighborhood of $\Sss'$.
\end{prop}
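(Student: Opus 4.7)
The plan is to apply Lemma \ref{lemma:Zupan} to the arcs $\sss$ and $\sss'$, which requires first converting the point $r \in \bdd M$ into an interior point by attaching a $3$-ball $B_R$ to $R$, forming $\hat M = M \cup_R B_R$. Let $b_R$ denote the center of $B_R$. Extending each of $\sss$ and $\sss'$ through $B_R$ by a radius from $r$ to $b_R$ produces arcs $\hat \sss$ and $\hat \sss'$ in $\hat M$, both terminating at $b_R$; their other endpoints are still $q$ and $p$ in $\Sss - \sss$. In $\hat M$, the compression body $\hat B = B \cup B_R$ now has $\hat \Sss = (\Sss - \sss) \cup \hat \sss$ as a spine, and similarly $\hat \Sss' = (\Sss - \sss) \cup \hat \sss'$ plays the corresponding role for the target neighborhood.

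To invoke Lemma \ref{lemma:Zupan}, I would take $C \subset \hat M$ to be a regular neighborhood of the union $\hat \Sss \cup \hat \sss' = (\Sss - \sss) \cup \hat \sss \cup \hat \sss'$. Because $R$ was the only sphere boundary component of $B$ removed by the ball attachment, $C$ is a compression body whose $\bdd_- C$ consists of the components of $\bdd_- B - R$, and $b_R$ lies in its interior. After small radial pushes of $q$ and $p$ off $\Sss - \sss$ onto $\bdd_+ C$, the arcs $\hat \sss$ and $\hat \sss'$ become two arcs in $C$ from $\bdd_+ C$ to the common interior point $b_R$, so Lemma \ref{lemma:Zupan} produces a proper isotopy in $C$ taking $\hat \sss$ to $\hat \sss'$ and fixing $b_R$. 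Extending to an ambient isotopy of $\hat M$ supported in $C$ carries $\hat \Sss$ to $\hat \Sss'$, so $\hat B = \eta(\hat \Sss)$ is ambient isotopic in $\hat M$ to $\eta(\hat \Sss')$.

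Finally, I would restrict to $M$ using the same light-bulb technique already employed in the proof of Lemma \ref{lemma:Zupan}: choose a generic point $b \in B_R$ distinct from $b_R$ and avoiding both radii, arrange by general position that the isotopy misses $b$, and then radially project from $b$ to push the entire isotopy out of $B_R$ and back into $M$. The resulting ambient isotopy of $M$ carries $\Sss$ to $\Sss'$, so $B = \eta(\Sss)$ is ambient isotopic to $\eta(\Sss')$, and hence $T$ is isotopic in $M$ to $\bdd \eta(\Sss')$. I expect the main obstacle to be the asymmetry that $\sss$ lies on the $B$-side of $T$ while $\sss'$ lies on the $A$-side, precluding a direct application of Lemma \ref{lemma:Zupan}; the ball-attachment device is precisely what symmetrizes the setup by placing both arcs together in a common compression body $C \subset \hat M$.
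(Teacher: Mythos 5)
The ball-attachment device and the appeal to Lemma \ref{lemma:Zupan} are in the spirit of the paper's proof, but your choice of the compression body $C$ introduces a genuine gap. You take $C$ to be a regular neighborhood of $(\Sss - \sss) \cup \hat\sss \cup \hat\sss'$, so that the complex $\Sss - \sss$ lies in the \emph{interior} of $C$. Lemma \ref{lemma:Zupan} then produces a proper isotopy of the arc $\hat\sss$ to $\hat\sss'$ in $C$, but the ambient extension of this isotopy, even when supported in $C$, has no reason to fix $\Sss - \sss$ setwise; the track of the moving arc can meet $\Sss - \sss$, and nothing in the statement or proof of Lemma \ref{lemma:Zupan} controls this. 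Consequently, the assertion that the ambient isotopy ``carries $\hat\Sss$ to $\hat\Sss'$'' does not follow: one gets $\Phi_1(\Sss - \sss) \cup \hat\sss'$, not $(\Sss - \sss) \cup \hat\sss'$, and these need not agree.

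The paper avoids this by taking $C$ to be the \emph{complement} of $\eta(\Sss_-)$ (the spine with the flower $\sss \cup R$ deleted): in that setting $C = A_+$ is the compression body on the other side of the new Heegaard surface, so $\Sss_-$ lies entirely outside $C$ and is automatically fixed by any ambient isotopy supported in $C$. Moreover, this choice makes $\bdd_+ C = \bdd(\eta(\Sss_-))$, so the path $\gamma$ from the proof of Lemma \ref{lemma:Zupan} lies on the boundary of a regular neighborhood of $\Sss_-$; sliding the end of $\sss$ along $\gamma$ is thereby recognized as an \emph{edge slide} of the spine $\Sss$, which is exactly the operation known to preserve the spine structure and induce an isotopy of $T$. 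Your framework does not produce edge slides and does not keep $\Sss - \sss$ out of the isotopy's reach, so the argument as written does not establish the proposition. (A secondary point: the ``asymmetry'' you flag as the main obstacle is handled in the paper not by attaching a ball but by transferring $R$ from $\bdd_- B$ to $\bdd_- A$, which places both $\sss$ and $\sss'$ inside the single compression body $A_+$.)
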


    \begin{figure}[th]
\labellist
\small\hair 2pt
\pinlabel  $p$ at 85 20
\pinlabel  $\sss$ at 200 140
\pinlabel  $R$ at 260 120
\pinlabel  $q$ at 200 100
\pinlabel  $\sss'$ at 30 80
%\pinlabel  $0$ at 430 140
\endlabellist
    \centering
    \includegraphics[scale=0.5]{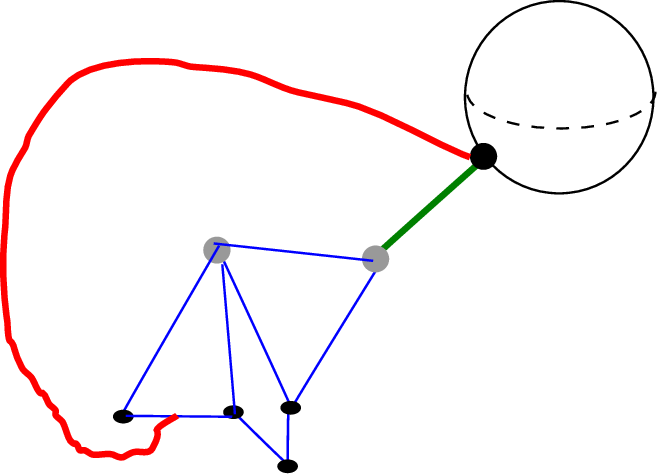}
\caption{A stem swap for the case $p, q \notin \bdd_- B \subset \Sss$ }
 \label{fig:swapRev}
    \end{figure}

\begin{proof}
Given the spine $\Sss$ as described, there is a natural alternate Heegaard splitting for $M$ in which $R$ is regarded as lying in $\bdd_- A$ instead of $\bdd_- B$.  It is obtained by deleting the flower $\sss \cup R$ from $\Sss$, leaving $R$ as an additional component of $\bdd_- A$.  Call the resulting spine $\Sss_-$ and let $A_+$ be the complementary compression body (so $M = A_+ \cup_{T'} \eta(\Sss_-)$).  Apply the argument of Lemma \ref{lemma:Zupan} to $A_+$, with $\beta = \sss, \aaa = \sss'$ and $r = R$.  (See Phase 1 of the proof of Proposition \ref{prop:snugdisjoint} for how we can regard the sphere $R$ as the point $r$.)
Let $\gamma$ be the path in $\bdd_+ A_+ = \bdd(\eta(\Sss_-))$ given by Lemma \ref{lemma:Zupan}.  Note that in Figure \ref{fig:swapRev} some edges in the spine $\Sss_-$ are shown, but we do not claim that the path $\gamma$ from Lemma \ref{lemma:Zupan} is a subgraph of $\Sss_-$.  Rather, the path is on the boundary of a {\em regular neighborhood} of $\Sss_-$ and does not necessarily project to an embedded path in $\Sss_-$ itself.  Note further that after the stem swap the edge in $\Sss$ that contains $p$ in its interior (if $p$ is on an edge and not on $\bdd_- B$) becomes two edges in $\Sss'$ and, dually, when $q$ is not on $\bdd_- B \subset \Sss$, it is natural to concatenate the two edges of $\Sss$ that are incident to $q$ into a single edge of $\Sss'$.

Returning to the original splitting, sliding an end of $\sss$ along $\gamma$ does not change the fact that $\Sss$ is a spine for $B$ and, viewing $T$ as the boundary of a regular neighborhood of $\Sss$, the slide defines an isotopy of $T$ in $M$.  After the slide, according to  Lemma \ref{lemma:Zupan}, $\sss$ and $\sss'$ have the same endpoints at $R$ and $p$; then $\sss$ can be isotoped to $\sss'$ rel its end points , completing the proof.  (Note that passing $\sss$ through $\sss'$, as must be allowed to invoke Lemma \ref{lemma:Zupan}, has no significance in this context.)
\end{proof}

\begin{defin} \label{defin:stemswap} The operation of Proposition \ref{prop:stemswap} in which we replace the stem $\sss$ with $\sss'$  is called a {\em stem swap}.  If the base of the stem $\sss'$ is the same as that of $\sss$ it is called a {\em local stem swap}.
\end{defin}

\begin{defin} \label{defin:edgereduce} Suppose $M = A \cup_T B$, and $\Sss$ is a spine for $B$.  A sphere $R_e$ that intersects $\Sss$ in a single point in the interior of an edge $e$ is an {\em edge-reducing sphere} for $\Sss$ 
% (since $R_e$ is typically a reducing sphere for $T$) 
and the associated edge $e$ is called a {\em reducing edge} in $\Sss$. 
\end{defin}

There is a broader context in which we will consider stem swaps:  Let $\Rrr$ be an embedded collection of edge-reducing spheres for $\Sss$, chosen so that no edge of $\Sss$ intersects more than one sphere in $\Rrr$.   (The latter condition, that each edge of $\Sss$ intersect at most one sphere in $\Rrr$, is discussed at the beginning of section \ref{sect:reduce}.) Let $\Mrr$ be a component of $M - \Rrr$ and $\Rrr_0 \subset \Rrr$ be the collection incident to $\Mrr$.  (Note that a non-separating sphere in $\Rrr$ may be incident to $\Mrr$ on both its sides.  We will be working with each side independently, so this makes very little difference in the argument.)
 
For a sphere $R_e \in \Rrr_0$, and $e \in \Sss$ the corresponding edge, the segment (or segments) $e \cap \Mrr$ can each be regarded as a stem in $\Mrr$, with blossom (one side of) $R_e$.  A stem swap on this flower can be defined for an arc $\sss' \subset \Mrr $ with interior disjoint from $\Sss$ that runs from the point $e \cap R_e$ to a point in $\Sss \cap \Mrr$.  Such a swap can be viewed in $M$ as a way of replacing $e$ with another reducing edge $e'$ for $R_e$ that differs from $e$ inside of $\Mrr$, leaving the other segment (if any) of $e$ inside $\Mrr$ alone.  %The following shows that there is little lost in doing so.
 
\begin{cor}\label{cor:stemswap} 
  If $\sss$ and $\sss'$ both lie in $\Mrr$, then the isotopy of $T$ described in Proposition \ref{prop:stemswap} can be assumed to take
place entirely in $\Mrr$.
\end{cor}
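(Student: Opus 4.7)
The plan is to re-execute the argument of Proposition \ref{prop:stemswap} entirely inside $\Mrr$, exploiting the fact that $\Mrr$ inherits a Heegaard-like decomposition from the cut of $M$ along $\Rrr$. Concretely, cutting $M$ open along $\Rrr$ cuts $T$ along its essential intersection circles with $\Rrr$ and cuts each reducing edge of $\Sss$ at its single transverse intersection point. The component $\Mrr$ then inherits a splitting $\Mrr = A_{\Mrr} \cup_{T_{\Mrr}} B_{\Mrr}$, where the spheres of $\Rrr_0$ appear as spherical components of $\bdd_-$, and $\Sss \cap \Mrr$ together with snug disks around the cut-edge endpoints (as in Lemma \ref{lemma:fillsnug}) is a spine for $B_{\Mrr}$. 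The hypothesis that no edge of $\Sss$ meets more than one sphere of $\Rrr$ ensures that this spine is well-defined inside $\Mrr$.

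Within this induced Heegaard structure on $\Mrr$, the pair $\sss \cup R_e$, with $R_e \in \Rrr_0$ and $\sss$ the given segment in $\Mrr$, is a flower in the sense of Definition \ref{defin:stem}. Since by hypothesis $\sss' \subset \Mrr$ as well, Proposition \ref{prop:stemswap} can now be applied directly within the splitting $\Mrr = A_{\Mrr} \cup_{T_{\Mrr}} B_{\Mrr}$: its proof (through the underlying Lemma \ref{lemma:Zupan}, applied to the compression body of $\Mrr$ obtained by moving $R_e$ to the $A$-side) operates entirely inside $\Mrr$, since both arcs, the path $\gamma$ produced by $\pi_1$-surjectivity of $\bdd_+ \to$ the compression body, and the isotopy supplied by the light-bulb trick all live in $\Mrr$. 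The output is an isotopy of $T_{\Mrr}$ in $\Mrr$ that realizes the stem swap, and extending by the identity on $M - \Mrr$ yields the desired isotopy of $T$ in $M$ supported in $\Mrr$.

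The main obstacle is justifying that $\Mrr$ genuinely inherits a Heegaard splitting in the generalized sense of Sections \ref{sect:vertical} and \ref{sect:vertical2}, so that Proposition \ref{prop:stemswap} — and through it Lemma \ref{lemma:Zupan} — applies inside $\Mrr$. This amounts to checking that $A_{\Mrr}$ and $B_{\Mrr}$ are compression bodies allowing spherical components of $\bdd_-$, and that the described spine genuinely exhibits $B_{\Mrr}$ as a regular neighborhood. The non-aspherical framework of Section \ref{sect:vertical2} was developed to accommodate exactly this situation, so the verification reduces to routine bookkeeping, with the snug meridian disks of Definition \ref{defin:snug} furnishing the required complete meridian collection for $B_{\Mrr}$.
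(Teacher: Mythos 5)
Your proposal follows essentially the same route as the paper's proof: cut $M$ along $\Rrr$, observe that $\Mrr$ inherits a Heegaard splitting in which each sphere of $\Rrr_0$ becomes a component of $\bdd_-$ of the $B$-side compression body, observe that $\Sss$ induces a spine for that compression body with the cut ends of reducing edges terminating on the spheres of $\Rrr_0$, and then apply Proposition \ref{prop:stemswap} to this induced splitting, extending the resulting isotopy by the identity outside $\Mrr$. One small slip in phrasing: you say that ``$\Sss \cap \Mrr$ together with snug disks around the cut-edge endpoints is a spine for $B_{\Mrr}$,'' but snug disks are meridian disks, not part of a spine; the induced spine is $\Sss \cap \Mrr$ together with the spheres of $\Rrr_0$ themselves (as $\bdd_-$ components), with the cut edge ends attached to those spheres. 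The snug disk structure is a consequence, not an ingredient, of the spine. This does not affect the correctness of the overall argument.
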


\begin{proof}  The manifold $\Mrr$ has a natural Heegaard splitting $\Mrr = A_{\Rrr} \cup_{T_0} B_{\Rrr}$ induced by that of $M$, in which each boundary sphere $R \in \Rrr_0$ is assigned to $\bdd_-  B_{\Rrr}$.  We describe this construction:  

Recall the setting: $\Sss$ is a spine for $B$ and $B$ itself is {\em a thin regular neighborhood} of $\Sss$.  Thus an edge-reducing sphere $R \in \Rrr$ intersects $B$ in a tiny disk, centered at the point $R \cap \Sss$.  This disk is a meridian of the tubular neighborhood of the reducing edge that contains the point $R \cap \Sss$.  The rest of $R$, all but this tiny disk, is a disk lying in $A$.  So $R$ is a reducing sphere for the Heegaard splitting of $M$.  

In the classical theory of Heegaard splittings (see e. g. [Sc]) such a reducing sphere naturally induces a Heegaard splitting for the manifold $\overline{M}$ obtained by reducing $M$ along $R$; that is, $\overline{M}$ is obtained by removing an open collar $\eta(R)$ of the sphere $R$ and {\em attaching $3$-balls} to the two copies $R_{\pm}$ of $R$ at the ends of the collar.  The classical argument then gives a natural Heegaard splitting on each component of $\overline{M}$: replace the annulus $T \cap \eta(R)$ by equatorial disks in the two balls attached to $R_{\pm}$.  Translated to our setting, the original spine $\Sss$ thereby induces a natural spine on each component of $\overline{M}$: the reducing edge is broken in two when $\eta(R)$ is removed, and at each side of the break, a valence-one vertex is attached, corresponding to the attached ball. 

For understanding $\Mrr$, we don't care about $\overline{M}$ and the unconventional (because of the valence one vertex) spine just described.  We care about the manifold $M - \eta(R)$, in which there are two new sphere boundary components created, but no balls are attached.  But the classical construction suggests how to construct a natural Heegaard splitting for the manifold $M - \eta(R)$ and a natural spine for it: simply regard both spheres $R_{\pm}$ as new components of $\bdd_- B$ and attach them at the breaks in the reducing edge where, above, we had added a valence 1 vertex.  This Heegaard splitting for $M - \eta(R)$ is topologically equivalent to taking the classical construction of the splitting on $\overline{M}$
% = \overline{A} \cup_{\overline{T}} \overline{B}$
 and removing two balls from the compression body $\overline{B}$.  

When applied to all spheres in $\Rrr$ simultaneously, the result of this construction is a natural Heegaard splitting on each component of  $M - \eta(\Rrr)$.  On $\Mrr$ it gives the splitting  $A_{\Rrr} \cup_{T_0} B_{\Rrr}$ which was promised above, and also a natural spine $\Sss_{\Rrr}$ for $B_{\Rrr}$.  
The required isotopy then follows, by applying Proposition \ref{prop:stemswap} to the Heegaard splitting $\Mrr = A_{\Rrr} \cup_{T_0} B_{\Rrr}$, with $B_{\Rrr}$ a thin regular neighborhood of the spine $\Sss_{\Rrr}$. 
\end{proof}

Suppose, in a stem swap, that $\sss'$ intersects an edge-reducing sphere $R_f$, with associated edge $f \neq \sss$.  See the first panel of Figure \ref{fig:bead3}.  (Note that $f$ is an edge in $\Sss$ but if $p \in f$ then $f$ becomes two edges in $\Sss'$.)  Although $R_f$ is no longer an edge-reducing sphere for $\Sss'$, there is a natural way to construct a corresponding edge-reducing sphere $R'_f$ for $\Sss'$, one that intersects $f$ in the same point, but now intersects $\sss$ instead of $\sss'$.  At the closest point in which $\sss'$ intersects $R_f$, tube a tiny neighborhood  in $R_f$ of the intersection point to its end at $R$ and then around $R$.  Repeat until the resulting sphere is disjoint from $\sss'$, as shown in the second panel of Figure \ref{fig:bead3}. 
%(or intersects it in a single point if $e = \sss$).  
One way to visualize the process is to imagine ambiently isotoping $R'_f$, in a neighborhood of $\sss'$,  to the position of $R_f$, as shown in the third panel of Figure \ref{fig:bead3}.  The effect of the ambient isotopy is as if $R$ is a bead sitting on the imbedded arc $\sss \cup \sss'$ and the ambient isotopy moves the bead along this arc and through $R_f$.  We will call $R'_f$ the {\em swap-mate} of $R_f$ (and vice versa).  

    \begin{figure}
    \labellist
\small\hair 2pt
\pinlabel  $R_f$ at 105 185
\pinlabel  $f$ at 85 120
\pinlabel  $R$ at 20 170
\pinlabel  $\sss'$ at 140 120
\pinlabel  $p$ at 130 60
\pinlabel  $q$ at 85 20
\pinlabel  $\sss$ at 50 100
\pinlabel $R'_f$ at 330 180
\pinlabel $R'_f$ at 480 180
\pinlabel  $R$ at 530 150
\endlabellist
    \centering
    \includegraphics[scale=0.6]{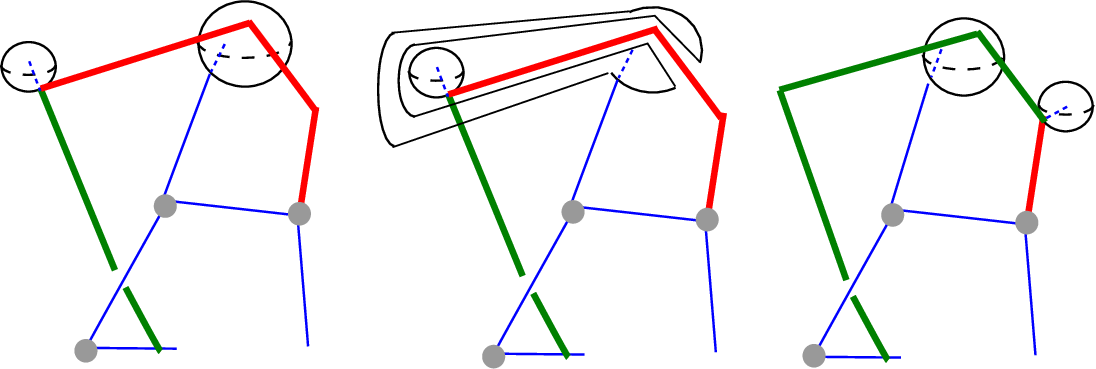}
\caption{Blossoms $R_f$ and $R'_f$} 
 \label{fig:bead3}
    \end{figure}

Here is an application.

%For $\calE$ a collection of edges in $\Sss$, denote by $\calE_r \subset \calE$ the set of reducing edges in $\calE$.  
Suppose $R_0$ is a reducing sphere for a reducing edge $e_0 \in \Sss$ and $\sss \subset e_0$ is one of the two segments into which $R_0$ divides $e_0$.  Let $\sss' \subset A - R_0$ be an arc whose ends are the same as those of $\sss$ but is otherwise disjoint from $\sss$.  Let $e'_0$ be the arc obtained from $e$ by replacing $\sss$ with $\sss'$.  Let $\eta(R_0)$ be the interior of a collar neighborhood of $R_0$ on the side away from $\sss$.

Viewing $\sss \cup R_0$ as a flower in the manifold $M - \eta(R_0)$, and the substitution of $\sss'$ for $\sss$ as a local stem swap, it follows from the proof of Proposition \ref{prop:stemswap} that the $1$-complex $\Sss'$ obtained from $\Sss$ by replacing $e_0$ with $e'_0$ is also a spine for $B$.  That is, $T$ is isotopic in $M$ to the boundary of a regular neighborhood of $\Sss'$.  Moreover, $e'_0$ remains a reducing edge in $\Sss'$ with edge-reducing sphere $R_0$. 

With this as context, we have:

\begin{lemma}  \label{lemma:simpleswap}  Suppose $\calE$ is a collection of edges in $\Sss$, with $e_0 \in \calE$, and let $\calE_r \subset \calE$ be the set of reducing edges for $\Sss$ that lie in $\calE$.  Similarly, suppose $\calE'$ is a collection of edges in $\Sss'$ containing the edge $e'_0$ constructed above, and $\calE'_r \subset \calE'$ is the set of reducing edges for $\Sss'$ that lie in $\calE'$.  If $\calE' - e'_0 \subset \calE - e_0$ then $\calE'_r - e'_0 \subset \calE_r - e_0$.
\end{lemma}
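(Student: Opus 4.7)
My plan is to apply the swap-mate construction of the preamble in reverse, exploiting the symmetry between $\sss$ and $\sss'$.

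Start with $f \in \calE'_r - e'_0$ and a witnessing edge-reducing sphere $R'_f$ for $\Sss'$, which meets $\Sss'$ in a single point in the interior of $f$. Because $f \neq e'_0$, the sphere $R'_f$ is disjoint from $\sss' \subset e'_0$. The hypothesis $\calE' - e'_0 \subset \calE - e_0$ places $f$ in $\calE - e_0$, so $f$ is also an edge of $\Sss$, and the single intersection $R'_f \cap f$ lies in the common edge set $\Sss - e_0 = \Sss' - e'_0$. Thus the only way $R'_f$ can fail to already be an edge-reducing sphere for $\Sss$ is for it to meet the segment $\sss \subset e_0$.

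The key observation is that the swap-mate construction described immediately before the lemma is symmetric in $\sss$ and $\sss'$: both are arcs in $A - R_0$ sharing endpoints, and $R_0$ is simultaneously the edge-reducing sphere for $e_0 \in \Sss$ and for $e'_0 \in \Sss'$. So the same bead-tubing procedure applies with the roles interchanged. At each point of $R'_f \cap \sss$ I would tube a tiny disk of $R'_f$ along $\sss$ to the base of the blossom and then around $R_0$, repeating until the resulting sphere $R_f$ is disjoint from $\sss$. Equivalently, this is an ambient isotopy that slides the bead $R_0$ along the arc $\sss' \cup \sss$ through $R'_f$; the isotopy is supported in a small neighborhood of $\sss \cup R_0$, so $R_f \cap (\Sss - e_0) = R'_f \cap (\Sss - e_0)$ is still the single point on $f$, and $R_f$ remains disjoint from the other segment $e_0 - \sss = e_0 \cap e'_0$ of $e_0$ because $R'_f$ was.

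The upshot is that $R_f$ is an edge-reducing sphere for $\Sss$ associated to $f$, so $f$ is a reducing edge of $\Sss$; since $f \in \calE$ and $f \neq e_0$ we conclude $f \in \calE_r - e_0$, as desired. The only substantive point requiring care is the claim that the swap-mate construction really can be run with $\sss$ and $\sss'$ swapped, but I expect this to be immediate from the bead-on-an-arc description, which uses no feature distinguishing $\sss'$ from $\sss$. I do not foresee any serious obstacle beyond spelling this symmetry out.
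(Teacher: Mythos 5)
Your proof is correct and follows the paper's own argument: both take $R'_f$, observe it can only meet $\Sss$ extraneously along $\sss$, and pass to the swap-mate $R_f$ (the construction is symmetric in $\sss$ and $\sss'$, as the paper signals with its ``(and vice versa)''), which is disjoint from $\sss$ and hence from all of $e_0$, so $f$ is a reducing edge of $\Sss$. The extra care you take in spelling out why the bead isotopy is supported near $\sss \cup R_0$ and so preserves the single intersection with $f$ is a fair elaboration of what the paper leaves implicit.
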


\begin{proof} Let $f$ be an edge in $\calE'_r$ other than $e'_0$, and $R'_f$ be a corresponding edge-reducing sphere for $\Sss'$.  Then $R'_f$ is disjoint from $e'_0$, so, although it may intersect $e_0$, any intersection points lie in $\sss \subset e_0$.  The swap-mate $R_f$ of $R'_f$ then may intersect $\sss'$ but by construction it will not intersect $\sss$.  Hence $R_f$ is disjoint from $e_0$ (as well as all edges of $\Sss$ other than $f$).  Hence $R_f$ is an edge-reducing sphere for $\Sss$ and $f \in \calE_r$.  
\end{proof}

Consider as usual a Heegaard splitting $M = A \cup_T B$, where $B$ is viewed as a thin regular neighborhood of a spine $\Sss$.  Suppose $\calE$ is a collection of edges in $\Sss$ and $\calE_r \subset \calE$ is the set of reducing edges for $\Sss$ that lie in $\calE$. (For example, $\calE$ might be the set of edges that intersects a specific essential sphere $S$ in $M$, as in the discussion that will follow Corollary \ref{cor:stemswap}.  This motivates the appearance of the red parallelograms in Figure \ref{fig:Rrimpact}.)  Suppose $\Rrr$ is an embedded collection of edge-reducing spheres for $\Sss$, one associated to each edge in $\calE_r$.  Let $\Mrr$ be a component of $M - \Rrr$ and consider a sphere $R_0 \in \Rrr_0 \subset \bdd \Mrr$.  Then, as just described before Lemma \ref{lemma:simpleswap}, a segment of the associated reducing edge $e_0$ that lies in $\Mrr$ can be regarded in $\Mrr$ as a stem $\sss$ with blossom $R_0$. (The rest of $e_0$ is shown as a dotted extension in Figure \ref{fig:Rrimpact}.) Let $\sss'$ be another arc properly embedded in $\Mrr$ which has the same ends as $\sss$ but is otherwise disjoint from $\Sss$, and let $\Sss'$ be the spine for $B$ constructed as above for the local stem swap of $\sss$ to $\sss'$.  Notice that because $\inter(\Mrr)$ is disjoint from the spheres $\Rrr$, $\inter(\sss') \subset \inter(\Mrr)$ is also disjoint from $\Rrr$.  
%A stem swap of this sort, where the end points of the edges swapped remain the same, will be called a {\em local stem swap}.

\begin{prop}  \label{prop:simpleswap}  Suppose $\calE'$ is a subcollection of the edges $\calE - e_0$, together possibly with the edge $e'_0$, and denote by $\calE'_r \subset \calE'$ the set of reducing edges for $\Sss'$ in $\calE'$.  
%After the local stem swap from $\sss$ to $\sss'$ 
There is a collection of edge-reducing spheres $\Rrr'$ for $\Sss'$, one associated to each edge in $\calE'_r$, so that $\Rrr' \subset \Rrr$.
\end{prop}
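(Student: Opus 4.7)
The plan is to define $\Rrr'$ as a subcollection of $\Rrr$: for each $f \in \calE'_r - e'_0$ I take the sphere $R_f \in \Rrr$ already associated to $f$ in $\Sss$, and if $e'_0 \in \calE'_r$ I take $R_0 \in \Rrr$ itself as the sphere associated to $e'_0$. The key input is Lemma \ref{lemma:simpleswap}: since $\calE' - e'_0 \subset \calE - e_0$ the lemma gives $\calE'_r - e'_0 \subset \calE_r - e_0$, so the required $R_f \in \Rrr$ exists for every $f \in \calE'_r - e'_0$. The content of the proof then consists of verifying that each such $R_f$ is in fact an edge-reducing sphere for $\Sss' = (\Sss - \sss) \cup \sss'$; since $R_f$ already meets $\Sss$ in exactly one interior point of $f$, and $f \neq e_0$ forces $R_f$ to be disjoint from $\sss$, this reduces to checking $R_f \cap \sss' = \emptyset$.

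I would show $R_f \cap \sss' = \emptyset$ by splitting into two cases according to whether $R_f$ lies in $\Rrr_0$. In the easy case $R_f \notin \Rrr_0$ the sphere $R_f$ is disjoint from $\Mrr$, hence automatically disjoint from $\sss' \subset \Mrr$. In the case $R_f \in \Rrr_0$, note first that $R_f \neq R_0$, since $R_0$ is uniquely associated to $e_0$ in $\Rrr$ and $f \neq e_0$; thus $R_f \subset \bdd \Mrr$ is disjoint from $R_0$. I then verify three separate pieces of $\sss'$: its interior lies in the interior of $\Mrr$ and so lies off $R_f \subset \bdd \Mrr$; one endpoint of $\sss'$ is the point $R_0 \cap e_0$, which lies on $R_0$ and not on $R_f$; and the other endpoint of $\sss'$ is the base of the stem $\sss$, which is an endpoint of the edge $e_0$ and therefore either a vertex of $\Sss$ or a point of $\bdd_- B$, in either case disjoint from every edge-reducing sphere in $\Rrr$ since such spheres meet $\Sss$ transversely in interiors of edges.

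Finally, in the case $e'_0 \in \calE'_r$, taking $R_0$ as the associated sphere works because the single point $R_0 \cap \Sss$ is the common endpoint of $\sss$ and the complementary subarc $e_0 - \sss$; after the stem swap this point becomes an interior point of the new edge $e'_0 = \sss' \cup (e_0 - \sss)$, and $R_0$ is otherwise disjoint from $\Sss'$, meeting $\sss'$ only at that same endpoint and meeting nothing else in $\Sss - \sss$ by hypothesis on $\Rrr$. Combining, $\Rrr' = \{R_f : f \in \calE'_r - e'_0\}$, augmented by $R_0$ when $e'_0 \in \calE'_r$, is the required subcollection of $\Rrr$, with embeddedness inherited. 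The step I expect to require the most care is the subcase $R_f \in \Rrr_0 - R_0$, where the three-part location check for $\sss'$ above is the only mechanism guaranteeing $\sss'$ avoids $R_f$, and it relies crucially on the convention that edge-reducing spheres never meet vertices of $\Sss$ nor $\bdd_- B$.
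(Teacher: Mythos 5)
Your proposal is correct and follows essentially the same route as the paper's own (much terser) proof: invoke Lemma \ref{lemma:simpleswap} to get $\calE'_r - e'_0 \subset \calE_r - e_0$, reuse the corresponding spheres from $\Rrr$ (and $R_0$ for $e'_0$), and observe that $\sss'\subset\Mrr$ makes those spheres avoid the new stem. Your extra care over the endpoints of $\sss'$ and the case split on whether $R_f\in\Rrr_0$ is just an expansion of the paper's one-line remark that ``since $\sss'$ is in $\Mrr$, it is disjoint from $\Rrr$.''
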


\begin{proof} From Lemma \ref{lemma:simpleswap} we know that $\calE'_r - e'_0 \subset \calE_r - e_0$,
Since $\sss'$ is in $\Mrr$, it is disjoint from $\Rrr$, so for each edge $f$ in $\calE'_r - e'_0$ we can just use the corresponding edge reducing sphere for $f$ in $\Sss$.  In the same vein, since $R_0$ is disjoint from $\sss'$, $R_0$ is an edge-reducing sphere for  $e'_0$ in $\Sss'$.  
\end{proof}

There is an analogous result for more general stem swaps, but it is more difficult to formulate and prove.  To that end, suppose $\sss' \subset \Mrr$ has one end at the base of $R_0$ and the other at a point $p \in \Sss$.  Here $p$ is not a vertex of $\Sss$, nor a point in $\Rrr$ and $\inter(\sss')$ is disjoint from $\Sss$.  If $p$ lies on an edge of $\Sss$, the edge is not one that is also incident to the base point $q$ of $\sss$. 

Consider the stem swap as described in Proposition \ref{prop:stemswap}. % and Corollary  \ref{cor:stemswap}.  
After the stem swap, one difference between the two spines $\Sss$ and $\Sss'$ (other than the obvious switch from $\sss$ to $\sss'$) is that if $p$ lies on an edge $e \subset \Sss$ then $e$ becomes two edges $e_{\pm}$ in $\Sss'$ and if the base point $q$ of $\sss$ lies on an edge $e' \subset \Sss'$ then $e'$ began as two edges $e'_{\pm}$ in $\Sss$. See Figure \ref{fig:Rrimpact}.

   \begin{figure}
\labellist
\small\hair 2pt
\pinlabel  $e'$ at 380 60
\pinlabel  $e'_+$ at 80 85
\pinlabel  $e'_-$ at 95 45
\pinlabel  $S$ at 20 80
\pinlabel  $S$ at 128 72
\pinlabel  $S$ at 325 100
\pinlabel  $\sss$ at 130 90
\pinlabel  $p$ at 305 30
\pinlabel  $e_+$ at 285 12
\pinlabel  $e_-$ at 320 12
\pinlabel  $q$ at 105 65
\pinlabel  $e$ at 40 15
\pinlabel  $\sss'$ at 240 50
%\pinlabel  $0$ at 430 140
\endlabellist
    \centering
    \includegraphics[scale=0.8]{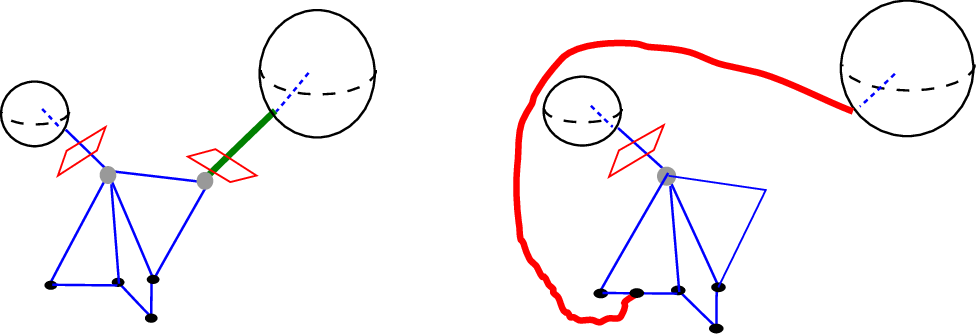}
\caption{Spines $\Sss$ and $\Sss'$} 
 \label{fig:Rrimpact}
    \end{figure}

\begin{defin} \label{defin:swapconsistent}
A collection of edges $\calE'$ in $\Sss'$ is {\em consistent with the swap} of $\sss$ to $\sss'$ (or  {\em swap-consistent}) if, when $p$ and/or $q$ lie on edges as just described, $\calE'$ has these properties:
\begin{itemize}
\item $\calE' - \{e_{\pm}, e', \sss'\} \subset \calE$.  
\item If either $e_{\pm}$ is in $\calE'$ then $e \in \calE$.
\item If both $e'_{\pm} \notin \calE'$ then $e' \notin \calE'$.  Or, equivalently, if $e' \in \calE'$ then at least one of $e'_{\pm} \in \calE$. 
\item Suppose $e$ is a reducing edge in $\calE$ with $R_e$ the corresponding edge-reducing sphere in $\Rrr$. Then the segment $e_+$ or $e_-$ not incident to $R_e$ is not in $\calE'$.  There must be such a segment since by hypothesis $p \notin \Rrr$.
\end{itemize}
(In the case that $p$ and/or $q$ lie on $\bdd_-B \subset \Sss$, so the edges $e$ and/or $e'$ are not defined, statements about these edges are deleted.)
\end{defin}

\begin{lemma}  \label{lemma:stemswap1}  Suppose $\calE'$ is consistent with the swap described above.  Then there is collection of edge-reducing spheres $\Rrr'$ for $\Sss'$, one associated to each reducing edge in $\calE'$, so that $\Rrr' \subset \Rrr$.
%, and $\Rrr'$ is disjoint from $int(\sss)$.  
\end{lemma}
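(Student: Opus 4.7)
The plan is to assign, to each reducing edge $f \in \calE'_r$, a sphere in $\Rrr$ that is also edge-reducing for $f$ in $\Sss'$. The argument proceeds by case analysis on the type of $f$: a shared edge (one of $\calE' - \{\sss', e', e_\pm\}$), one of $e_\pm$, the merged edge $e'$, or the new stem $\sss'$.

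The underlying observation is that since $\sss' \subset \Mrr - \Sss$ lies in a component of $M - \Rrr$, every sphere $R_g \in \Rrr$ is disjoint from $\sss'$, and hence $R_g \cap \Sss' = R_g \cap (\Sss - \sss)$. This reads off how each sphere in $\Rrr$ reinterprets itself with respect to $\Sss'$: a shared $g$ keeps $R_g$ edge-reducing for $g$; $R_e$ becomes edge-reducing for whichever of $e_\pm$ contains the point $R_e \cap e$; $R_{e'_+}$ or $R_{e'_-}$ becomes edge-reducing for the merged edge $e'$; and $R_0 = R_{e_0}$, which met $\sss$ at the base of $R_0$, meets $\sss'$ at that same base point and so becomes edge-reducing for $\sss'$.

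Three of the four cases then dispatch cleanly using swap-mates. For $f$ shared and $f \in \calE'_r$, the swap-mate construction (tube an edge-reducing sphere for $f$ in $\Sss'$ around $R_0$ to remove any intersections with $\sss$) yields an edge-reducing sphere for $f$ in $\Sss$; together with condition~1 this places $f \in \calE_r$, and $R_f$ serves. For $f = e_+$ (the case $e_-$ being symmetric), the same swap-mate produces an edge-reducing sphere for $e$ in $\Sss$; condition~2 gives $e \in \calE_r$, and condition~4 forces $R_e \cap e$ to lie in $e_+$, so $R_e$ serves. For $f = \sss'$, we have $e_0 \in \calE_r$ by hypothesis on $\Rrr$, and $R_0$ is the assigned sphere.

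The main obstacle is $f = e'$: the swap-mate applied to an edge-reducing sphere $S$ for $e'$ in $\Sss'$ produces an edge-reducing sphere in $\Sss$ for whichever of $e'_\pm$ happens to contain $S \cap e'$, whereas condition~3 only guarantees that one specific $e'_\pm$ (not necessarily the one $S$ selects) lies in $\calE$. I would resolve this by a finger-move isotopy of $S$ along $e'$ in $M$, supported in a thin tubular neighborhood of $e'$, sliding the single transverse intersection $S \cap e'$ across the interior point $q$ onto the required side. Since the support avoids $\Sss' - e'$, the slid sphere is still edge-reducing for $e'$ in $\Sss'$; any intersections with $\sss$ created by the slide are then eliminated by the subsequent swap-mate. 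The resulting sphere is edge-reducing for the desired $e'_\pm$ in $\Sss$, making that $e'_\pm$ reducing for $\Sss$ and, since it lies in $\calE$ by condition~3, an element of $\calE_r$; the corresponding $R_{e'_\pm} \in \Rrr$ is assigned to $e'$, completing the construction of $\Rrr' \subset \Rrr$.
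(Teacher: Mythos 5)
Your proof is correct and follows essentially the same case analysis as the paper: pass to a swap-mate, use the consistency conditions to locate the relevant edge of $\Sss$ in $\calE$, and then show it is reducing. The ``obstacle'' you identify in the $f=e'$ case and resolve via a finger move is in the paper handled by the (equivalent but more succinct) observation that the edge-reducing sphere $R'_f$ for $e'$ in $\Sss'$ is determined only up to porous isotopy along $e'$, and so ``may as well be taken to pass through $e'_+$.''
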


%{\bf Note:}  The important point here is that $\Rrr' \subset \Rrr$.

\begin{proof}  Consider any reducing edge $f \in \calE'$. If $f = \sss'$ use $R_0$ for the corresponding sphere in $\Rrr'$.  In any other case, since $f$ is a reducing edge for an edge in $\Sss'$, a corresponding edge-reducing sphere $R'_f$ is automatically disjoint from $int(\sss')$ since $R'_f$ only intersects $\Sss'$ in a single point.  Its swap-mate $R_f$ is then an edge-reducing sphere for $\Sss$, because it is disjoint from $int(\sss)$. We do not know that $R_f \in \Rrr$ and in fact it can't be if $int(\sss')$ intersects $R_f$, since $\sss'$ was chosen, following Proposition \ref{prop:simpleswap},  to be in $\Mrr$. %so all but its end at the base of $R_0$ is disjoint from $\Rrr$. 
With this in mind, consider the possibilities:

If $f \notin \{e_{\pm}, e', \sss'\}$ then $f \in \calE$, since $\calE'$ is consistent with the swap.  Then $R_f$ is an edge-reducing sphere for $f$ in $\Sss$, so $f$ is a reducing edge in $\calE$.  As originally defined prior to Proposition \ref{prop:simpleswap}, $\calE_r$ is the set of reducing edges in $\calE$, so $f \in \calE_r$. Since $\Rrr$ contains an edge-reducing sphere for each edge in $\calE_r$, $\Rrr$ contains an edge-reducing sphere for $f$.  By construction this sphere is disjoint from both $int(\sss)$ and $int(\sss')$, the latter by choice of $\sss'$. Include this as the sphere in $\Rrr'$ that corresponds to $f$.

As noted at the start, if $f = \sss'$ use $R_0$.  

If $f = e'$, then one of $e'_{\pm}$, say $e'_+$ is in $\calE$, since $\calE'$ is consistent with the swap. $R'_f$ may as well be taken to pass through $e'_+ \subset e'$.  Then $R_f$ is an edge-reducing sphere for $\Sss$ that passes through $e'_+$.  Hence $e'_+$ is a reducing edge in $\calE$.  The edge-reducing sphere in $\Rrr$ corresponding to $e'_+$ is again disjoint from both $int(\sss)$ and $int(\sss')$. Include this as the sphere in $\Rrr'$ that corresponds to $f$.  

If $f$ is one of the edges $e_{\pm}$, say $e_+$, then $e \in \calE$, since $\calE'$ is consistent with the swap.  As before, the sphere $R_f$ shows that $e$ is a reducing edge for $\Sss$ and so has a corresponding edge-reducing sphere $R$ in $\Rrr$.  Include it in $\Rrr'$ to correspond to $f = e_+$.  The last condition in Definition \ref{defin:swapconsistent} ensures that $e_- \notin \calE'$ so no corresponding edge-reducing sphere is included in $\Rrr'$.  In simple terms, $R$ appears only once in $\Rrr'$.  The condition also ensures that $f$ is the subedge of $e$ in $\Sss'$ that is incident to $R$.  
\end{proof}

\section{When $\bdd S \subset \bdd_- B \subset \bdd M$: early considerations.} \label{sect:early}

We will begin the proof of Theorem \ref{thm:main} in the case that $S$ is connected.  In conjunction with Proposition \ref{prop:S0} that will complete the proof of Theorem \ref{thm:main}.  

\subsection{Preliminary remarks} What will be most important for our purposes is not that $S$ is connected, but that $S$ is entirely disjoint either from all of $\bdd_- A$ or all of $\bdd_- B$, as is naturally the case when $S$ is connected.  So we henceforth assume with no loss of generality that $\bdd S \subset \bdd_- B$.  Following that assumption, the compression bodies $A$ and $B$ play very different roles in the proof.  We will be studying spines of $B$ and will take for $A$ the complement in $M$ of a regular neighborhood $\eta(\Sss)$ of such a spine $\Sss$.  In particular, each sphere component $R$ of $\bdd_- B$ is part of $\Sss$.  As noted in the discussion of spines following Definition \ref{defin:comcoll}, we can choose $\Sss$ so that each sphere component $R$ is incident to exactly one edge of $\Sss$; in that case we are in a position to apply the key idea of stem swapping to alter $\Sss$, as in Proposition \ref{prop:stemswap}.  

In contrast, the sphere components of $\bdd_- A$ play almost no role in the proof, other than requiring a small change in language.  Since in Theorem \ref{thm:main}  the isotopy class of $S$ remains fixed (indeed, that is the point of the theorem), we must be careful not to pass any part of $S$ through a sphere component of $\bdd_- A$, but the constructions we make use of will avoid this.  For example, underlying a stem swap in $\Sss$ is the slide and isotopy of an edge of $\Sss$. (See Proposition \ref{prop:stemswap}.) But these can be made to avoid sphere components of $\bdd_- A$, essentially by general position.  More explicitly, let $\hat{M}$ be the $3$-manifold obtained from $M$ by attaching a ball to each sphere component of $\bdd_- A$.  A slide or isotopy of an edge of $\Sss$ can avoid the centers of these balls by general position, and then be radially moved outside the entire balls and back into $A$.

A more subtle problem arises when, for example, we want to use a classical innermost disk (or outermost arc) argument to move a surface $F$ in $A$ so that it is disjoint from $S$.  In the classical setting we find a circle $c$ in $F \cap S$ that bounds a disk $E_S \subset S - F$ and a disk $E_F \subset F$ and argue that one can isotope $E_F$ past $E_S$, reducing the number of intersections, via a ball whose boundary is the sphere $E_F \cup E_S$.  But the existence of such a ball requires $A$ to be irreducible, an assumption that fails when $\bdd_- A$ contains spheres.  It will turn out that this fraught situation can always be avoided here by {\em redefining} $F$ to be the surface obtained by a simple disk-exchange, replacing $E_F \subset F$ with a push-off of $E_S \subset S$. 
%So it will be important to always note that this change of $F$ is not important to the argument.

A useful way to visualize and describe this process of redefining $F$ is to imagine, both in the argument and in the figures, a host of bubbles floating around in $A$, corresponding to sphere components of $\bdd_- A$.  These bubbles cannot pass through $S$ (or $\Sss$), but typically each bubble can pass ``through'' other surfaces we construct, in the sense that, when needed, the constructed surface $F$ can be redefined to pass on the other side of the bubble.  As shorthand for this process (which we have already seen in Phase 2 of the proof of Proposition \ref{prop:snugdisjoint}) we will describe the process as a {\em porous isotopy} of $F$ (equivalent to an actual isotopy in $\hat{M}$), since the bubbles appear to pass through $F$.

\bigskip

\subsection{The argument begins:}  Let $\Sss$ denote a spine of $B$ and, as usual, take $B$ to be a thin regular neighborhood of $\Sss$.  

Let $(\Delta, \bdd \Delta) \subset (A, T)$ be a collection of meridian disks for $A$ that constitute a complete collection of meridian disks for $\hat{A}$, the compression body obtained from $A$ by capping off all spherical boundary components by balls.  Let $B_+ = B \cup \eta(\Delta)$; since $\Delta$ is complete for $\hat{A}$ the complement of $B_+$ is the union of punctured balls and a punctured collar of $\bdd_- A \subset \bdd M$.
The deformation retraction of $B$ to $\Sss$ will carry $\Delta$ to disks in $M - \Sss$; continue to denote these $\Delta$. 
% Such a collection is {\em complete} if the complement of $B_+ = B \cup \eta(\Delta)$ is the union of balls and a collar of $\bdd_- A \subset \bdd M$.    

Suppose an edge $e$ of $\Sss$ is disjoint from $\Delta$.  A point on $e$ corresponds to a meridian of $B$ whose boundary lies on $\bdd B_+$.  If it is inessential in $\bdd B_+$ then it bounds a disk in $A$, so such a meridian can be completed to a sphere intersecting $e$ in a single point.  In other words, $e$ is a reducing edge of $\Sss$.  %Since every sphere separates $M$, any reducing edge separates $\Sss$.    

The other possibility is that the boundary of the meridian disk for $e$ is essential on $\bdd B_+$, so it, together with an essential curve in $\bdd_- A$ bounds an essential spanning annulus $a_e \subset A$.  Together, the meridian disk of $e$ and the annulus $a_e$ comprise a boundary reducing disk for $M$, in fact one that also $\bdd$-reduces the splitting surface $T$.  (In particular, the disk is aligned with $T$.)   We will eliminate from consideration this possibility by a straightforward trick, which we now describe.

\begin{lemma} \label{lemma:annuliindelta} There is a collection $\mathcal{C} \subset \bdd_-A$ of disjoint essential simple closed curves with the property that $\mathcal{C}$ intersects any essential simple closed curve in $\bdd_-A$ that bounds a disk in $M$.  
\end{lemma}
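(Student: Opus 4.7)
The plan is to build $\mathcal{C}$ one component of $\bdd_-A$ at a time. Sphere components of $\bdd_-A$ contribute nothing, since they carry no essential simple closed curves. For each non-sphere component $F$ the goal is to produce a \emph{maximal} disjoint collection $\mathcal{C}_F$ of pairwise non-isotopic essential simple closed curves on $F$ --- a single essential curve when $F$ is a torus, and a pants decomposition ($3g-3$ curves) when $g(F) \ge 2$ --- chosen so that no curve in $\mathcal{C}_F$ bounds a disk in $M$. Then $\mathcal{C} = \bigcup_F \mathcal{C}_F$ works: any essential simple closed curve in $\bdd_-A$ disjoint from $\mathcal{C}$ must lie in a single component $F$ and, by maximality of $\mathcal{C}_F$, be isotopic to some curve of $\mathcal{C}_F$; hence it cannot bound a disk in $M$. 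The contrapositive gives the lemma.

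The torus case is immediate: any two disjoint essential simple closed curves on $T^2$ are isotopic, so at most one isotopy class on such an $F$ can be disk-bounding, and infinitely many alternatives are available to serve as $\mathcal{C}_F$.

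For a component $F$ of genus $g \ge 2$, I build $\mathcal{C}_F$ inductively, adding non-disk-bounding essential curves one at a time. Suppose $j < 3g-3$ such curves have already been chosen; some component $P$ of $F$ cut along them is not a pair of pants, and I need to find in $P$ an essential-in-$F$, non-boundary-parallel-in-$P$ simple closed curve that does not bound a disk in $M$. This existence claim is the main obstacle. I expect it to follow by combining the Loop Theorem (so that any disk-bounding essential curve on $F$ lies in $\ker(\pi_1(F) \to \pi_1(M))$) with Kneser--Haken finiteness for disjoint non-parallel compressing disks in $M$: together these show that the disk-bounding isotopy classes realizable in $P$ form a proper subset of the essential isotopy classes available there, so a non-disk curve can always be selected to extend the collection. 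Completing the induction produces the required pants decomposition and with it $\mathcal{C}_F$.
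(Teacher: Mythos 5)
Your plan---build a pants decomposition $\mathcal{C}_F$ of each non-sphere component $F$ of $\partial_- A$ in which no curve bounds a disk in $M$, and then note that any essential curve disjoint from $\mathcal{C}_F$ is parallel to a curve of $\mathcal{C}_F$ and hence not disk-bounding---is a legitimate strategy and is genuinely different from the paper's. The paper does not require its pants curves to avoid the set $K$ of disk-bounding curves. Instead it first chooses $g$ disjoint non-separating curves $\mathcal{C}_-$ whose classes span a $g$-dimensional subspace of $H_1(F;\mathbb{R})$ complementary to $\mathrm{span}(K)$, so that $\mathcal{C}_-$ essentially meets every non-separating curve of $K$, and then completes $\mathcal{C}_-$ to a pants decomposition $\mathcal{C}_0$ using only non-separating curves. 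Any essential curve disjoint from $\mathcal{C}_0$ is then parallel to a curve of $\mathcal{C}_0$, hence non-separating, yet has zero algebraic intersection with all of $\mathcal{C}_-$, so it cannot lie in $K$.

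The gap in your proof is the inductive existence step, which you yourself identify as the main obstacle. Kneser--Haken finiteness bounds the number of pairwise disjoint, pairwise non-parallel compressing disks in $M$; it does not bound the number of isotopy classes of simple closed curves on $\partial M$ that bound disks. For a handlebody $M$ with $F = \partial M$ that number is infinite, so ``the disk-bounding isotopy classes in $P$ form a proper subset'' cannot be deduced from a finiteness theorem, and a counting argument is in any case hopeless when both the disk-bounding and the non-disk-bounding isotopy classes in a non-pants subsurface $P$ are infinite. Your torus case has the same defect: the fact that disjoint essential curves on $T^2$ are isotopic does not by itself imply that at most one isotopy class on a torus boundary component can bound a disk.

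What makes both steps work is precisely the homological input the paper invokes as ``standard duality'': the image of $\partial\colon H_2(M,\partial M;\mathbb{R}) \to H_1(\partial M;\mathbb{R})$ is Lagrangian, so the classes of disk-bounding curves on $F$ span an isotropic subspace of $H_1(F;\mathbb{R})$ of dimension at most $g$; in particular any two disk-bounding curves have zero algebraic intersection number. That immediately gives the torus case (non-isotopic essential curves on $T^2$ have nonzero algebraic intersection, so at most one class is disk-bounding), and it is also the ingredient needed to supply the missing inductive step in higher genus. The paper's order of construction---fix the ``Lagrangian complement'' $\mathcal{C}_-$ first, then fill out the pants decomposition with non-separating curves---is the cleanest way to deploy this fact, since it avoids having to analyze an arbitrary non-pants subsurface $P$ produced by a greedy choice.
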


\begin{proof} Suppose $A_0$ is a genus $g \geq 1$ component of $\bdd_- A$.  By standard duality arguments, the collection $K \subset A_0$ of simple closed curves that compress in $M$ can generate at most a $g$-dimensional subspace of $H_1(A_0, \real) \cong \real^{2g}$.  More specifically, one can find a non-separating collection $c_1,...c_g$ of disjoint simple closed curves in $A_0$ so that $\mathcal{C}_- = \cup_{i=1}^g c_i$ generates a complementary $g$-dimensional subspace of  $H_1(A_0, \real)$, and therefore essentially intersects any {\em non-separating} curve in $K$. It is easy to add to $\mathcal{C}_-$ a further disjoint collection of $2g-3$ simple closed curves, each non-separating, so that the result $\mathcal{C}_0 \subset A_0$ has complement a collection of $2g-2$ pairs of pants.  Any curve in $A_0$ that is disjoint from $\mathcal{C}_0$ is parallel to a curve in $\mathcal{C}_0$ and so must be non-separating.  Since it is disjoint from $\mathcal{C}_- \subset \mathcal{C}_0$ it cannot be in $K$.

%Since each curve in $\mathcal{C}_0$ is non-separating, and any simple closed curve in the pairs of pants $A_0 - \mathcal{C}_0$ must be parallel to a curve in $\mathcal{C}_0$, any curve in $K$ essentially intersects $\mathcal{C}_0$.  

Do the same in each component of $\bdd_-A$; the result is the required collection $C$.  
\end{proof}

Following Lemma \ref{lemma:annuliindelta} add to the collection of disks $\Delta$ the disjoint collection of annuli $\mathcal{C} \times I \subset \bdd_-A \times I \subset M - B_+$, and continue to call the complete collection of meridional disks and these spanning annuli $\Delta$.   Then a meridian of an edge $e$ of $\Sss$ that is disjoint from the (newly augmented) $\Delta$ cannot be part of a $\bdd$-reducing disk for $T$ and so must be part of a reducing sphere.  Since the collection $S$ of reducing spheres and $\bdd$-reducing disks we are considering have no contact with $\bdd_-A$, arcs of $S \cap \Ddd$ are nowhere incident to $\bdd_- A$.  Additionally, no circle in $S \cap \Ddd$ can be essential in an annulus in $\mathcal{C} \times I $, since no circle in $\mathcal{C}$ bounds a disk in $M$.   Hence the annuli which we have added to $\Delta$ intersect $S$ much as a disk would: each circle of intersection bounds a disk in the annulus and each arc of intersection cuts off a disk from the same end of the annulus.  As a result, the arguments cited below, usually applied to disk components of $\Delta$, apply also to the newly added annuli components $\mathcal{C} \times I$.  

\section{Reducing edges and $S$}

\begin{lemma} \label{lemma:isolate} Suppose a spine $\Sss$ for $B$ and a collection $\Delta$ of meridians and annuli, as just described, have been chosen to minimize the pair $(|\Sss \cap S|, |\bdd \Delta \cap S|)$ (lexicographically ordered, with $\Sss, S, \Delta$ all in general position).  Then $\Sss$ intersects $int(S)$ only in reducing edges.
\end{lemma}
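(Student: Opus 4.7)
The plan is to argue by contradiction from the minimality of $(|\Sss\cap S|,|\bdd\Delta\cap S|)$. Suppose some edge $e$ of $\Sss$ meets $\mathrm{int}(S)$ at a point $x$ but is not a reducing edge; I will find a modification of $(\Sss,\Delta)$ that strictly reduces this pair. The overall strategy is to exhibit a meridian disk $D$ of the $1$-handle $\eta(e)$ at $x$ whose complementary piece on the $A$ side either certifies $e$ as a reducing edge directly (providing the contradiction) or supplies a path along which a stem swap removes $x$ from $\Sss\cap S$.

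First I would standardize $S\cap B$ near $x$. Let $\Sigma$ be the component of $S\cap B$ containing $x$; since $S$ is a disk or sphere, $\Sigma$ is either a disk or an annulus in $B$. Regarding the cocores of the $1$-handles of $\eta(\Sss)=B$ as a complete meridian system and using the minimality of $|\Sss\cap S|$, an innermost-disk analysis within $\Sigma$ reduces us (possibly after adjusting $\Sss$ by slides and isotopies that do not increase $|\Sss\cap S|$) to the situation in which the component of $S\cap B$ meeting $e$ at $x$ is a meridian disk $D$ of $\eta(e)$: that is, $D\cap\Sss=\{x\}$ and $\alpha_x:=\bdd D\subset T$ is a meridian circle of $\eta(e)$. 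Examine the component $F\subset S\cap A$ having $\alpha_x$ as one of its boundary curves; since $\bdd S\subset\bdd_- B$ and $S$ is disjoint from $\bdd_- A$, the boundary of $F$ lies entirely on $T$. If $F$ is a disk bounded by $\alpha_x$, then $D\cup F$ is a sphere meeting $\Sss$ transversally only at $x\in\mathrm{int}(e)$, exhibiting $e$ as a reducing edge and contradicting the assumption. Hence $F$ is planar with at least one further boundary circle $\alpha'$ on $T$.

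Second, use the minimality of $|\bdd\Delta\cap S|$ to clean up $\Delta\cap F$: by innermost-disk and outermost-arc arguments in $\hat A$, performed as in Phase~2 of the proof of Proposition \ref{prop:snugdisjoint}, one can slide and isotope $\Delta$ until it is disjoint from $F$. The augmentation annuli $\mathcal{C}\times I$ supplied by Lemma \ref{lemma:annuliindelta} prevent essential circles of $\bdd\Delta\cap F$ from obstructing the reduction. An arc in $F$ from $\alpha_x$ to $\alpha'$ now gives a path in $A$ along which a stem swap (Proposition \ref{prop:stemswap}) or local edge slide of $\Sss$ can be performed inside $A$. Using Proposition \ref{prop:vertdodge} together with the porous-isotopy device described in the preliminary remarks to this section to avoid sphere components of $\bdd_- A$, the slide can be arranged to meet $S$ only in arcs disjoint from $x$. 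After the slide, $x$ is no longer in $\Sss\cap S$ and no new intersections have been created, so the resulting spine $\Sss'$ satisfies $|\Sss'\cap S|<|\Sss\cap S|$, contradicting minimality.

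The main obstacle lies in Step~1, namely the reduction to the case that the component of $S\cap B$ touching $e$ at $x$ is a single meridian disk. A priori $\Sigma$ may be an annulus reaching to $\bdd_- B$ and meet $\Sss$ many times; the reduction proceeds by an innermost-circle analysis on $\Sigma$ against the cocores of the $1$-handles, during which any ``candidate'' sphere produced along the way either witnesses $e$ as reducing (completing the proof) or is used to reduce $|\Sss\cap S|$ directly. Care with sphere components of $\bdd_- A$ during the final slide is likewise handled by the porous-isotopy argument introduced above.
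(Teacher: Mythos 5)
Your approach diverges substantially from the paper's, and it has a serious gap at exactly the place where the real work of the proof has to happen.

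The paper proves the lemma via the classical graph-theoretic framework of Casson--Gordon and Scharlemann--Thompson: form the graph $\Gamma\subset S$ whose vertices are the points of $\Sss\cap S$ and whose edges are the arcs of $\Ddd\cap S$, cite the result from \cite{ST} that minimality of $|\Sss\cap S|$ forces every innermost loop to contain only isolated vertices (and that absence of loops forces every vertex to be isolated), and then kill loops: an innermost loop bounds a disk $E\subset S$ meeting $\Sss$ only in reducing edges, so by Dehn's lemma the loop bounds an embedded disk $E'$ in $A-\eta(D)$ with interior disjoint from $\Ddd$, and compressing the component $D\in\Ddd$ along $E'$ produces a new complete collection $\Ddd'$ with $|\bdd\Ddd'\cap S|\le|\bdd\Ddd\cap S|-2$ and no new vertices. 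That contradicts minimality of the second coordinate. You never invoke this graph, its loops, or the Dehn's lemma step, so you are not following the paper's route.

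The gap is in your Step 2. Your Step 1 is actually automatic (since $B$ is a thin regular neighborhood of the graph $\Sss$, the interior components of $S\cap B$ are already tiny meridian disks, one for each point of $\Sss\cap S$), so the obstacle you flag is not the real one. The real trouble is your claim that when the component $F$ of $S\cap A$ adjacent to $\alpha_x$ is not a disk, ``an arc in $F$ from $\alpha_x$ to $\alpha'$ gives a path along which a stem swap or local edge slide can be performed inside $A$,'' after which ``$x$ is no longer in $\Sss\cap S$ and no new intersections have been created.'' This is not justified. Stem swaps (Proposition \ref{prop:stemswap}) only apply when $e$ is the stem of a flower, i.e.\ one end of $e$ lies on a sphere of $\bdd_-B$, and a general edge of $\Sss$ need not be such a stem. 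And an edge slide of $e$ guided by an arc in $S$ has no a priori reason to decrease $|\Sss\cap S|$: the new position of the slid edge can cross $S$ elsewhere, and you have offered no mechanism controlling this. The paper gets a strict decrease precisely because it descends in the \emph{second} coordinate $|\bdd\Delta\cap S|$ while holding the vertex count fixed, using the loop/Dehn's-lemma argument; you are trying to decrease the \emph{first} coordinate $|\Sss\cap S|$ directly, which is exactly what the framework does not let you do in general. Without the graph/loop machinery, your Step 2 is a missing idea, not a detail.

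A secondary concern: you propose to slide $\Delta$ until it is disjoint from $F$. But $|\bdd\Delta\cap S|$ is being minimized, and isotoping $\Delta$ off one component $F\subset S\cap A$ can well increase $|\bdd\Delta\cap S|$ along other components of $S\cap A$; you cannot assume such a cleanup respects the minimality hypothesis you are arguing from.
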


Notes: \begin{itemize}
\item We do not care about the number of circles in $\Ddd \cap S$.  
\item If $S$ is a disk and intersects $\Sss$ transversally only in $\bdd S \subset \bdd_-B$, then $S$ is aligned with $T = \bdd (\eta(\Sss))$ and intersects $B$ in a vertical annulus, completing the proof of Theorem \ref{thm:main} in this case.  In addition, $S$ is a $\bdd$-reducing disk for $T$ if $\bdd S$ is essential in $\bdd_- B$.  
\item If $S$ is a sphere and intersects $\Sss$ transversally only in a single point, then $S$ is aligned with $T$, completing the proof of Theorem \ref{thm:main} in this case.  Moreover, if the circle $S \cap T$ is essential in $T$, $S$ is a reducing sphere for $T$.
\end{itemize}
 
\begin{proof}
  
Recall from a standard proof of Haken's Theorem (see eg \cite{Sc1}, \cite[Proposition 2.2]{ST})  that $(\Sss \cup \Ddd) \cap S$ (ignoring circles of intersection) can be viewed as a graph $\Ggg$ in $S$ in which points of $\Sss \cap S$ are the vertices and $\Ddd \cap S$ are the edges.  As discussed in \cite{ST} in the preamble to Proposition 2.2 there, this is accomplished by extending the disks and annuli $\Ddd$ via a retraction $B \to \Sss$ 
so that it becomes a collection of disks and annuli whose imbedded interior is disjoint from $\Sss$ and whose singular boundary lies on $\Sss$. When $S$ is a disk we will, with slight abuse of notation, also regard $\bdd S$ as a vertex in the graph, since it lies in $\bdd_- B \subset \Sss$.  (This can be made sensible by imagining capping off $\bdd S$ by an imaginary disk outside of $M$.)  

Borrowing further from the preamble to \cite[Proposition 2.2]{ST}, an edge in $\Ggg$ is a loop if both ends lie on the same vertex, called the base vertex for the loop.  A loop is {\em inessential} if it bounds a disk in $S$ whose interior is disjoint from $\Sss$, otherwise it is {\em essential}.  A vertex in $\Ggg$ is {\em isolated} if it is incident to no edge in $\Ggg$.  

It is shown in \cite{ST} that if $\Sss$ and $\Delta$ are chosen to minimize  the pair $(|\Sss \cap S|, |\bdd \Delta \cap S|)$ then
\begin{itemize}
\item there are no inessential loops
\item any innermost loop in the graph $\Ggg$ bounds a disk in $S$ that contains only isolated vertices and
\item if there are no loops in $\Ggg$ then every vertex is isolated.
\end{itemize}

%\bigskip
It follows that either $S$ is disjoint from $\Sss$ (so it is aligned and we are done) or there is at least one isolated vertex.  An isolated vertex represents a point $p$ in an edge $e$ of $\Sss$ which is incident to no element of $\Delta$.  The point $p$ defines a meridional disk $D_B$ of $B = \eta(\Sss)$, and the fact that the curve $\bdd D_B \subset \bdd_+ A$ is disjoint from $\Ddd$ ensures that $\bdd D_B$  is parallel to a curve in $\bdd_- A$ that is inessential.  Thus $\bdd D_B$ also bounds a disk $D_A$ in $A$.  Then $D_A \cup D_B$ is a reducing sphere, so $e$ is a reducing edge in $\Sss$. This establishes the original Haken's Theorem and, if there are no loops at all, also Lemma  \ref{lemma:isolate}.  That there are no loops is what we now show.

 Consider an innermost loop, consisting of a vertex $p \in \Sss \cap S$ and an edge lying in a component $D $ of $ \Ddd$.  Together, they define a circle $c$ in $S$ that bounds a disk $E \subset S$ whose interior, by the argument of \cite[Proposition 2.2]{ST}), contains only isolated vertices and so intersects $\Sss$ only in reducing edges.  Remembering that we are taking $A = M - \eta(\Sss)$, the $3$-manifold $A_- = A - \eta(D)$ can be viewed as $M - \eta(D \cup \Sss)$, so $c$ is parallel in $E$ to a circle $c'$ in $\bdd A_-$ bounding a subdisk $E_-$ of $E$.  $E_-$ is the complement in $E$ of the collar in $E$ between $c$ and $c'$. Since $E_-$ intersects $\Sss$ only in reducing edges, it follows immediately that $c'$ is null-homotopic in $A_-$ and then by Dehn's lemma that it bounds an embedded disk $E'$ entirely in $A_-$.  

By standard innermost disk arguments we can find such an $E'$ so that its interior is disjoint from $\Ddd$.  Now split $D$ in two by compressing the loop to the vertex along $E'$ and replace $D$ in $\Ddd$ by these two pieces, creating a new complete (for $\hat{A}$) collection of disks and annuli $\Ddd'$, with $|\bdd \Ddd' \cap S| \leq  |\bdd \Ddd \cap S| -2$.  Since we have introduced no new vertices, this contradicts our assumption that $(|\Sss \cap S|, |\bdd \Delta \cap S|)$ is minimal.
\end{proof}

Note that the new $\Ddd'$ may intersect $S$ in many more circles than $\Ddd$ did, but we don't care.

\section{Edge-reducing spheres for $\Sss$} \label{sect:reduce}

Recall from Section \ref{sect:stemswap} that, given a reducing edge $e$ in $\Sss$ an associated edge-reducing sphere $R_e$ is a sphere in $M$ that passes once through $e$.  
Any other edge-reducing sphere $R'_e$ passing once through $e$ is porously isotopic to $R_e$ in $M$ (i. e. isotopic in $\hat{M}$) via edge-reducing spheres.  Indeed, the segment of $e$ between the points of intersection with $\Sss$ provides an isotopy from the meridian disk $R_e \cap B$ to $R'_e \cap B$; this can be extended to a porous isotopy of $R_e \cap A$ to $R'_e \cap A$ since $\hat{A}$ is irreducible.  So $R_e$ is well-defined up to porous isotopy.

Let $\Sss$ be a spine for $B$ in general position with respect to the disk/sphere $S$, and suppose $\calE$ is a collection of edges in $\Sss$.  Let $\Rrr$ be a corresponding embedded collection of edge-reducing spheres transverse to $S$, one for each reducing edge in $\calE$.  Let $|\Rrr \cap S|$ denote the number of components of intersection.

\begin{defin} \label{defin:weight}
 The {\em weight} $w(\Rrr)$ of $\Rrr$ is $|\Rrr \cap S|$.
Porously isotope $\Rrr$ via edge-reducing spheres so that its weight is minimized, and call the result $\Rrr(\calE)$.  Then the {\em weight} $w(\calE)$ of $\calE$ is $w(\Rrr(\calE))$.  
\end{defin}

Consider the stem swap as defined in Proposition \ref{prop:stemswap} and Corollary  \ref{cor:stemswap} and suppose $\calE'$ is a collection of edges in $\Sss$ that is swap-consistent with $\calE$.  

\begin{lemma} \label{lemma:stemswap2}
There is a collection $\Rrr'$ of edge-reducing spheres for $\Sss'$, one for each reducing edge in $\calE'$ so that $w(\Rrr') \leq w(\Rrr)$.
\end{lemma}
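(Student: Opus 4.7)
The plan is to deduce Lemma \ref{lemma:stemswap2} as an essentially immediate consequence of Lemma \ref{lemma:stemswap1}, with no new geometric input required. Given the swap-consistent collection $\calE'$, Lemma \ref{lemma:stemswap1} already supplies a collection $\Rrr'$ of edge-reducing spheres for $\Sss'$, one for each reducing edge of $\calE'$, with $\Rrr' \subset \Rrr$. I would use precisely the $\Rrr'$ produced by that lemma as my $\Rrr'$ here.

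The first thing to check is that the containment $\Rrr'\subset\Rrr$ is honest, meaning $\Rrr'$ is a literal subset of $\Rrr$ with no sphere counted twice. Reading the case analysis in the proof of Lemma \ref{lemma:stemswap1}: for $f \notin \{e_{\pm}, e', \sss'\}$ the assigned sphere is the edge-reducing sphere of $f$ already in $\Rrr$; for $f = \sss'$ it is $R_0\in \Rrr$; for $f = e'$ it is the $\Rrr$-sphere associated to $e'_+$ (or $e'_-$); and for $f\in\{e_+,e_-\}$ it is the $\Rrr$-sphere associated to $e$. Distinctness across these cases follows from the first three clauses of Definition \ref{defin:swapconsistent}, while the fourth clause prevents both $e_+$ and $e_-$ from lying in $\calE'$ simultaneously, so the sphere associated to $e$ is used at most once.

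Once honest inclusion is in hand, the weight bound falls out in one line. The disk/sphere set $S$ is not moved during the swap, and the physical spheres constituting $\Rrr'$ are exactly those constituting a subcollection of $\Rrr$, so
\[
\Rrr' \cap S \;\subset\; \Rrr \cap S, \qquad w(\Rrr') \;=\; |\Rrr' \cap S| \;\leq\; |\Rrr \cap S| \;=\; w(\Rrr).
\]
The one bookkeeping point worth mentioning explicitly is that each sphere of $\Rrr'$ really is edge-reducing for $\Sss'$, not merely for $\Sss$; but this is automatic, since $\sss' \subset \Mrr$ is disjoint from $\Rrr$ by construction, so any $R \in \Rrr'$ meets $\Sss' = (\Sss - \sss) \cup \sss'$ in the same single interior-of-edge point at which it met $\Sss$. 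I do not anticipate any real obstacle here; the content of the lemma is simply the observation that the selection already performed inside Lemma \ref{lemma:stemswap1} introduces no new intersections with $S$, so no further porous isotopy or weight-reducing maneuver is needed.
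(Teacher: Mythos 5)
Your proposal is correct and takes exactly the approach the paper takes: the paper's proof of Lemma \ref{lemma:stemswap2} is the single sentence ``This is immediate from Lemma \ref{lemma:stemswap1},'' and you have simply spelled out why — since $\Rrr'\subset\Rrr$ and $S$ is unchanged, $w(\Rrr')=|\Rrr'\cap S|\leq|\Rrr\cap S|=w(\Rrr)$. Your bookkeeping remarks (distinctness of the assigned spheres, and that $\Rrr'$ consists of edge-reducing spheres for $\Sss'$ rather than $\Sss$) are both already guaranteed by the statement of Lemma \ref{lemma:stemswap1}, so they are sanity checks rather than additional steps in the argument.
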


\begin{proof}
This is immediate from Lemma \ref{lemma:stemswap1}.
\end{proof}

\begin{cor}  \label{cor:stemswap3} Suppose in Lemma \ref{lemma:stemswap2} $\Rrr$ is $\Rrr(\calE)$.  Then  $w(\calE') \leq w(\calE)$.
\end{cor}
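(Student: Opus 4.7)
The plan is to chain together the inequality supplied by Lemma \ref{lemma:stemswap2} with the minimality built into Definition \ref{defin:weight}. By hypothesis $\Rrr = \Rrr(\calE)$, so by definition $w(\Rrr) = w(\calE)$. Lemma \ref{lemma:stemswap2} then produces a collection $\Rrr'$ of edge-reducing spheres for $\Sss'$, one for each reducing edge of $\calE'$, satisfying $w(\Rrr') \leq w(\Rrr) = w(\calE)$.

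Now I would invoke the definition of $w(\calE')$. Since $\Rrr'$ is a collection of edge-reducing spheres for $\Sss'$ with one sphere for each reducing edge of $\calE'$, it is a valid candidate over which the infimum defining $w(\calE')$ is taken. Concretely, applying a porous isotopy through edge-reducing spheres to minimize the weight can only decrease $|\Rrr' \cap S|$, yielding $w(\calE') = w(\Rrr(\calE')) \leq w(\Rrr')$.

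Combining the two inequalities gives $w(\calE') \leq w(\Rrr') \leq w(\Rrr) = w(\calE)$, which is the desired conclusion. There is essentially no obstacle here: the content of the argument was already absorbed into Lemma \ref{lemma:stemswap1} (and hence Lemma \ref{lemma:stemswap2}), which constructed $\Rrr'$ as a subcollection of $\Rrr$. The only thing to check is that the minimization in the definition of $w(\calE')$ is compatible with the construction — which it is, since $\Rrr'$ is built out of edge-reducing spheres for the reducing edges of $\calE'$, exactly the class of collections that $w(\calE')$ optimizes over.
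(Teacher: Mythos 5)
Your argument is correct and is essentially the paper's own proof: apply Lemma \ref{lemma:stemswap2} to get $\Rrr'$ with $w(\Rrr') \leq w(\Rrr)$, note $w(\calE') \leq w(\Rrr')$ since $w(\calE')$ minimizes over exactly such collections, and chain with $w(\Rrr) = w(\Rrr(\calE)) = w(\calE)$. Nothing to add.
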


\begin{proof}  Let $\Rrr'$ be the collection of spheres given in Lemma \ref{lemma:stemswap2}.  By definition $w(\calE') \leq w(\Rrr')$ so, by Lemma \ref{lemma:stemswap2} $$w(\calE') \leq w(\Rrr') \leq w(\Rrr) = w(\Rrr(\calE)) = w(\calE).$$\end{proof}

Here is a motivating example:  For $\Sss$ a spine of $B$ in general position with respect to $S$, let $\calE$ be the set of edges that intersect $S$, with the set of edge-reducing spheres $\Rrr = \Rrr(\calE)$ corresponding to the reducing edges of $\calE$ .  As usual, let $\Mrr$ be a component of $M - \Rrr$ and $\Rrr_0$ be the collection of spheres in $\bdd \Mrr$ that comes from $\Rrr$.  Suppose  $R_0$ is a sphere in $\Rrr_0$ with stem $\sss$, and suppose $\sss'$ is an arc in $\Mrr$ from the base of $R_0$ to a point $p$ in an edge $e$ of $\Sss$, very near an end vertex of $e$, so that the subinterval of $e$ between $p$ and the end vertex does not intersect $S$.

Perform an edge swap and choose $\calE'$ to be the set of edges in $\Sss'$ that intersect $S$.

\begin{prop} \label{prop:stemswap4}
$\calE'$ is swap-consistent with $\calE$.
\end{prop}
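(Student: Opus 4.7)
The plan is to verify each of the four bullets of Definition \ref{defin:swapconsistent} in turn, using the defining property of the current setup: the base point $p$ of $\sss'$ lies very near an end vertex of $e$, with the short segment of $e$ from $p$ to that vertex disjoint from $S$. The first three bullets are essentially bookkeeping; the fourth, where the interaction between $R_e$ and $S$ enters, requires a porous-isotopy adjustment.

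For bullet 1, any edge $f \in \calE' - \{e_\pm, e', \sss'\}$ agrees as a subset of $M$ with an edge of $\Sss$, since the stem swap only modifies the edges $\sss, e'_\pm \subset \Sss$, replacing them with $\sss', e_\pm, e' \subset \Sss'$; since $f \in \calE'$ means $f$ meets $S$, we conclude $f \in \calE$. For bullet 2, if some $e_\pm \in \calE'$, then the sub-arc $e_\pm \subset e$ meets $S$, so $e \in \calE$. For bullet 3, $e' \subset \Sss'$ coincides with $e'_+ \cup e'_-$ as a point set in $M$, so $e' \cap S = (e'_+ \cap S) \cup (e'_- \cap S)$; thus $e' \in \calE'$ forces at least one of $e'_{\pm}$ into $\calE$.

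The substantive work is in bullet 4. Suppose $e \in \calE$ is a reducing edge with corresponding $R_e \in \Rrr = \Rrr(\calE)$; if $e \notin \calE$ the bullet is vacuous. Label $e_-$ the short sub-arc of $e$ from $p$ to the nearby end vertex, and $e_+$ the complementary long sub-arc. By hypothesis $e_- \cap S = \emptyset$, so $e_- \notin \calE'$. It therefore suffices to arrange that $R_e \cap e$ lies in $e_+$, so that $e_-$ is the segment not incident to $R_e$. To do so, slide the intersection point $R_e \cap e$ along $e$ past $p$ into $e_+$ via a porous ambient isotopy supported in a thin tubular neighborhood $\eta(e)$. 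Since $e_- \cap S = \emptyset$ and, by general position, $p \notin S$, the slide may be routed so that the intersection point never passes over a point of $S \cap e$; equivalently, the annulus on $T$ swept out by the boundary circle $R_e \cap T$ avoids the circles of $S \cap T$ encircling the points of $S \cap e$. Such a slide preserves $|R_e \cap S|$, and its support is disjoint from every other sphere of $\Rrr$ (each of which meets a different edge of $\Sss$, hence is disjoint from $\eta(e)$ provided $\eta(e)$ is thin enough). Consequently $\Rrr(\calE)$ admits a minimum-weight representative with $R_e \cap e \subset e_+$, and bullet 4 is verified.

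The main obstacle is exactly the porous-isotopy argument in bullet 4: one must make precise that the slide of $R_e \cap e$ along $e$ can be engineered to preserve both the weight $|R_e \cap S|$ and the embeddedness of $\Rrr$. Both conditions are arranged in tandem by working in a small enough $\eta(e)$ and routing the slide through $p$ so as to avoid every point of $S \cap e$, which is possible precisely because $e_-$ is disjoint from $S$ and $p$ has been placed in general position off $S$.
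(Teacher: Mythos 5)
Your treatment of the first three bullets matches the paper and is fine. The issue is with bullet 4, where you overlook the fact that the placement of $R_e\cap e$ is already forced by the hypotheses, and you propose a repair (sliding $R_e$) that the setup does not actually permit.

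The key observation you are missing is that $\sss'$ was chosen to lie in $\Mrr$, a component of $M-\Rrr$, and its endpoint $p$ is a point of $e$. Since $e$ meets $\Rrr$ only in the single point $R_e\cap e$, and $p\in\Mrr$ is taken very near the end vertex $v$, the subinterval $e_-$ from $p$ to $v$ lies entirely in $\Mrr$ and in particular does not contain $R_e\cap e$. Hence $R_e\cap e\in e_+$ automatically; there is nothing to arrange. This is exactly the content of the paper's proof: ``Since $\sss'$ lies in a component $\Mrr$ of $M-\Rrr$, the point $p$ lies between the sphere in $\Rrr$ corresponding to $e$ and an end vertex $v$ of $e$.'' Your write-up never invokes the constraint $\sss'\subset\Mrr$ in bullet 4 at all, which is what makes the conclusion look like it needs to be engineered.

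Worse, the slide you propose would be illegal if it were ever a genuine move rather than a no-op. To push $R_e\cap e$ from $e_-$ past $p$ into $e_+$ inside $\eta(e)$, the sphere must cross the arc $\sss'$, which meets $e$ at $p$. After such a slide, $\sss'$ would no longer be disjoint from $\Rrr$, so $\sss'$ would not lie in a single component of $M-\Rrr$. That destroys the standing hypothesis under which Definition~\ref{defin:swapconsistent} and Lemma~\ref{lemma:stemswap1} are formulated (the proof of Lemma~\ref{lemma:stemswap1} uses explicitly that the spheres of $\Rrr$ are disjoint from $int(\sss')$), so the downstream conclusion $\Rrr'\subset\Rrr$ and the weight estimates in Corollaries~\ref{cor:stemswap3} and~\ref{cor:stemswap5} would no longer follow. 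You track $|R_e\cap S|$ and disjointness from the other spheres of $\Rrr$, but not the interaction with $\sss'$, which is the constraint that actually matters here.
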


\begin{proof}  All but the last property of Definition \ref{defin:swapconsistent} is immediate, because $S$ will intersect an edge if and only if it intersects some subedge.  The last property of Definition \ref{defin:swapconsistent} follows from our construction:  Since $\sss'$ lies in a component $\Mrr$ of  $M - \Rrr$, the point $p$ lies between the sphere in $\Rrr$ corresponding to $e$ and an end vertex $v$ of $e$, and the segment of $e$ between $p$ and $v$ is disjoint from $S$ by construction and therefore not in $\calE'$.
\end{proof}

%For $\Sss$ a spine of $B$, let $\calE$ be the set of reducing reducing edges that intersect $S$.  
Define the weight $w(\Sss)$ of $\Sss$ to be $w(\calE)$, and similarly $w(\Sss') = w(\calE')$.   

\begin{cor} \label{cor:stemswap5}  Given a stem swap as described in Propositions  \ref{prop:stemswap} or \ref{prop:simpleswap}
% and Corollary  \ref{cor:stemswap}
 for $\Rrr(\calE)$, $w(\Sss') \leq w(\Sss)$.
\end{cor}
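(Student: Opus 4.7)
The plan is to unwind the two defining equalities $w(\Sss)=w(\calE)$ and $w(\Sss')=w(\calE')$, where $\calE$ (respectively $\calE'$) denotes the collection of edges of $\Sss$ (respectively $\Sss'$) that intersect $S$, and then feed this into the machinery already set up in Propositions \ref{prop:simpleswap} and \ref{prop:stemswap4} together with Corollary \ref{cor:stemswap3}.

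First I would dispose of the local case (Proposition \ref{prop:simpleswap}). In this case $\Sss$ and $\Sss'$ differ only in that the edge $e_0$ is replaced by $e'_0$; every other edge is literally the same. Consequently $\calE'\setminus\{e'_0\}=\calE\setminus\{e_0\}$, so the hypothesis of Proposition \ref{prop:simpleswap} is satisfied with respect to the weight-minimizing collection $\Rrr(\calE)$. That proposition then yields a subcollection $\Rrr'\subseteq\Rrr(\calE)$ of edge-reducing spheres, one for each reducing edge of $\calE'$ in $\Sss'$. Since $\Rrr'\subseteq\Rrr(\calE)$ we have $w(\Rrr')\leq|\Rrr(\calE)\cap S|=w(\calE)$, and by the definition of $w(\calE')$ as an infimum, $w(\calE')\leq w(\Rrr')$. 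Stringing these together gives $w(\Sss')=w(\calE')\leq w(\calE)=w(\Sss)$.

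For the general stem swap of Proposition \ref{prop:stemswap}, the argument is even more direct: Proposition \ref{prop:stemswap4} is stated in precisely the form needed, asserting that with the choice $\calE$ = (edges of $\Sss$ meeting $S$) and $\calE'$ = (edges of $\Sss'$ meeting $S$), the collection $\calE'$ is swap-consistent with $\calE$. Invoking Corollary \ref{cor:stemswap3} with $\Rrr=\Rrr(\calE)$ then delivers $w(\calE')\leq w(\calE)$, which is the claim.

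The only point that demands any care is the verification that for our specific choices of $\calE$ and $\calE'$ the hypotheses of the two relevant prior results are actually satisfied. In the local case this reduces to the trivial equality $\calE'\setminus\{e'_0\}=\calE\setminus\{e_0\}$; in the general case the fourth (and most delicate) clause of Definition \ref{defin:swapconsistent} is exactly what Proposition \ref{prop:stemswap4} was designed to certify, using the assumption that the new base point $p$ lies on the portion of its edge that is disjoint from $S$. Once these compatibility checks are in place, the corollary is an immediate combination of Corollary \ref{cor:stemswap3} with the definition of $w(\Sss)$ and $w(\Sss')$.
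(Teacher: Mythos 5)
Your proof is correct and follows essentially the same route as the paper, which simply cites Proposition \ref{prop:stemswap4} and Corollary \ref{cor:stemswap3}. The only difference is that you handle the local swap separately via Proposition \ref{prop:simpleswap}, whereas the paper leaves that case implicit; your treatment is a bit more explicit but not a genuinely different argument.
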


\begin{proof}  This follows immediately from Proposition \ref{prop:stemswap4} and Corollary \ref{cor:stemswap3}.
\end{proof} 

We will need a modest variant of Corollary \ref{cor:stemswap5} that is similar in proof but a bit more complicated.  
As before, let $\calE$ be the set of edges in a spine $\Sss$ that intersect $S$, with the set of edge-reducing spheres $\Rrr = \Rrr(\calE)$ corresponding to the reducing edges of $\calE$.   Suppose $e_0 \in \calE$ with corresponding edge-reducing sphere $R_0 \in \Rrr$. Then, by definition, $$w(\Sigma) = w(\calE) = w(\Rrr) = w(\Rrr - R_0) + w(R_0) = w(\Rrr - R_0) + |R_0 \cap S|.$$

Let $\Rrr_- = \Rrr - R_0$, $\calE_- = \calE - e_0$ and $\Mrr_-$ be the component of $M - \Rrr_-$ that contains $R_0$.  Perform an edge swap {\em in $\Mrr_-$} as in the motivating example: replace the stem $\sss$ of a sphere  $\fra$ in $\Rrr_-$ with $\sss'$, an arc in $\Mrr_-$ from the base of $\fra$ to a point $p$ in an edge $e$ of $\Sss$, very near an end vertex of $e$, so that the subinterval of $e$ between $p$ and the end vertex does not intersect $S$.   Notice that, in this set-up, $R_0$ is essentially invisible: the new stem $\sss'$ is allowed to pass through $R_0$.  The swap-mate $R'_0$  of $R_0$ is an edge-reducing sphere for $e_0$ in $\Sss'$ that is disjoint from $\Rrr_- = \Rrr - R_0$

As in the motivating example, let $\calE'$ be the set of edges in $\Sss'$ that intersects $S$ and further define $\calE'_- = \calE' - e_0$.  

\begin{prop} \label{prop:Rrr-}  
$w(\Sss') \leq w(\Sss) - |R_0 \cap S|+  |R'_0 \cap S|$

\end{prop}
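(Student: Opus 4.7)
The plan is to mimic the reasoning of Corollary \ref{cor:stemswap5}, but applied to the restricted data $(\calE_-,\Rrr_-,\Mrr_-)$ in which $R_0$ has been removed, and then to handle $e_0$ separately by using its swap-mate $R'_0$. Since the stem $\sss$ being swapped is the stem of some $\fra\in\Rrr_-$ and $\sss'\subset\Mrr_-$, all spheres of $\Rrr_-$ are disjoint from $\sss'$ by construction, so $\Rrr_-$ may serve as the ``pre-existing'' collection of edge-reducing spheres in the swap-mate construction, exactly as $\Rrr$ served in Lemma \ref{lemma:stemswap2}. The fact that $\sss'$ may cross $R_0$ is invisible to this construction because $R_0\notin\Rrr_-$.

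First I would verify that $\calE'_-:=\calE'-e_0$ is swap-consistent with $\calE_-$. Since $e_0$ is an edge of $\Sss$ distinct from $\sss$ and disjoint (as an edge) from the base point $p$ of $\sss'$, the swap of $\sss$ to $\sss'$ leaves $e_0$ unchanged as an edge of $\Sss'$. Hence removing $e_0$ from both $\calE$ and $\calE'$ does not affect any of the four bullets of Definition \ref{defin:swapconsistent}, and Proposition \ref{prop:stemswap4} applied in $\Mrr_-$ then gives swap-consistency of $(\calE_-,\calE'_-)$. Lemma \ref{lemma:stemswap1}, invoked with $\Rrr_-$ in place of $\Rrr$, yields a collection $\Rrr'_-\subset\Rrr_-$ of edge-reducing spheres for $\Sss'$, one for each reducing edge of $\calE'_-$.

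Next I would deal with $e_0$. Because $\sss'\subset\Mrr_-$ and $\Mrr_-$ is a component of $M-\Rrr_-$ containing $R_0$, the swap-mate $R'_0$ is an edge-reducing sphere for $e_0$ in $\Sss'$ (it is disjoint from $\Rrr_-$ and passes once through $e_0$). Thus $\Rrr'_-\cup\{R'_0\}$ is a valid collection of edge-reducing spheres for $\Sss'$ corresponding to the reducing edges of $\calE'$. Assembling the weights,
\[
w(\Sss')\;\le\;w(\Rrr'_-\cup\{R'_0\})\;=\;w(\Rrr'_-)+|R'_0\cap S|\;\le\;w(\Rrr_-)+|R'_0\cap S|,
\]
and since $w(\Rrr_-)=w(\Rrr)-|R_0\cap S|=w(\Sss)-|R_0\cap S|$, the claimed inequality follows.

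The main obstacle I foresee is the bookkeeping of swap-consistency in the version where the ambient sphere collection is $\Rrr_-$ rather than $\Rrr$: one must check that the swap-mate construction performed around $\fra$ (not around $R_0$) still produces a sphere in $\Rrr_-$ for each reducing edge of $\calE'_-$, even though the new stem $\sss'$ is permitted to cross $R_0$. All the relevant considerations, however, are local to $\fra$ and $\sss'$, so the verification reduces verbatim to the case already handled in Proposition \ref{prop:stemswap4} and Corollary \ref{cor:stemswap5}, applied inside $\Mrr_-$.
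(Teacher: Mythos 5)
Your proof matches the paper's own argument essentially step for step: apply Lemma \ref{lemma:stemswap1} to the reduced data $(\calE_-,\Rrr_-)$ to obtain $\Rrr'_-\subset\Rrr_-$, observe that the swap-mate $R'_0$ is an edge-reducing sphere for $e_0$ in $\Sss'$, and conclude by the same weight arithmetic $w(\Sss')\leq w(\Rrr'_-)+|R'_0\cap S|\leq w(\Rrr_-)+|R'_0\cap S|=w(\Sss)-|R_0\cap S|+|R'_0\cap S|$. One small caution: your intermediate remark that ``the swap leaves $e_0$ unchanged as an edge of $\Sss'$'' silently assumes $p\notin e_0$, which need not hold (indeed in the application in Proposition \ref{prop:main2}, $\sss'$ ends on the segment $e_-$ of $e_0$); the paper's own proof is equally terse on this point, and the conclusion survives because $R'_0$ still passes once through whichever subedge of $e_0$ contains the original base point, so this does not change the verdict that your approach coincides with the paper's.
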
 

\begin{proof} As in the motivating example, $\calE'_-$ is consistent with the swap, so by Lemma \ref{lemma:stemswap1} there is a collection $\Rrr'_- \subset \Rrr_- = \Rrr - R_0$ of edge-reducing spheres associated to the edge-reducing spheres of $\calE'_-$. Then $\Rrr'_- \cup R'_0$ is a collection of edge-reducing spheres for $\calE'$.  Thus 
$$w(\Sss') = w(\calE') \leq w(\Rrr'_-) + w(R'_0) \leq w(\Rrr_-) + w(R'_0) =$$ $$ w(\Rrr) - w(R_0) + w(R'_0) = w(\Sss) - w(R_0) + w(R'_0)$$
\end{proof}

 \section{Minimizing $w(\Rrr) = |\Rrr \cap S|$}

Following Lemma \ref{lemma:isolate}, consider all spines that intersect $S$ only in reducing edges, and define $\calE$ for each such spine to be as in the motivating example from Section \ref{sect:reduce}: the collection of edges that intersect $S$. Let $\Sss$ be a spine for which $w(\Sss) = w(\calE)$ is minimized and let $\Rrr(\Sss)$ denote the corresponding collection of edge-reducing spheres for $\Sss$.  In other words, among all such spines and collections of edge-reducing spheres, choose that which minimizes the number $|\Rrr \cap S|$ of (circle) components of intersection. 

\begin{prop} \label{prop:main2}  $\Rrr(\Sss)$ is disjoint from $S$.  
\end{prop}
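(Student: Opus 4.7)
The plan is to argue by contradiction. Suppose some sphere $R_0\in\Rrr(\Sss)$ meets $S$ nontrivially, and aim to produce either another collection of edge-reducing spheres for the reducing edges of $\Sss$, porously isotopic to $\Rrr(\Sss)$ but with strictly smaller total intersection with $S$, or a new spine $\Sss^{*}$ with $w(\Sss^{*})<w(\Sss)$. Either outcome contradicts the relevant minimality hypothesis.

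The central construction begins by choosing a circle $c\in\Rrr(\Sss)\cap S$ that is innermost on $S$ --- that is, $c$ bounds a disk $E_S\subset S$ with $\mbox{int}(E_S)\cap\Rrr(\Sss)=\emptyset$. Let $R_0\in\Rrr(\Sss)$ be the sphere containing $c$, with corresponding reducing edge $e_0$, and let $E\subset R_0$ be the disk cut off by $c$ that contains the meridian disk $R_0\cap B$ (equivalently, the unique point $p_0=R_0\cap\Sss$). Form the surgered sphere $\tilde{R}_0:=E\cup_c E_S^{+}$, where $E_S^{+}$ is a push-off of $E_S$ slightly into a bicollar of $S$ on the appropriate side, so that $E_S^{+}\cap S=\emptyset$ and $\tilde{R}_0$ is disjoint from every other sphere in $\Rrr(\Sss)$. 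Since $\tilde{R}_0\cap S=(E\cap S)\cup(E_S^{+}\cap S)$ consists only of the circles of $R_0\cap S$ lying in $\mbox{int}(E)$, one counts $|\tilde{R}_0\cap S|\le|R_0\cap S|-1$.

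The main obstacle is ensuring $\tilde{R}_0$ is a bona fide edge-reducing sphere for $e_0$: since $E\cap\Sss=\{p_0\}$ already, we need $E_S^{+}\cap\Sss=\emptyset$, and a small transversality check shows this is equivalent to $\mbox{int}(E_S)\cap\Sss=\emptyset$. In this clean case, $\tilde{R}_0$ is porously isotopic to $R_0$ through edge-reducing spheres (by the remarks opening Section \ref{sect:reduce}), so $(\Rrr(\Sss)\setminus R_0)\cup\{\tilde{R}_0\}$ is a porously isotopic collection of strictly smaller weight, delivering the required contradiction.

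In the obstructing case $\mbox{int}(E_S)\cap\Sss\neq\emptyset$, each offending point $p$ lies on a reducing edge $e_p$ whose edge-reducing sphere $R_{e_p}\in\Rrr(\Sss)$ is, by the innermost choice of $c$, disjoint from $E_S$ and so lies on one side of it. The stem of $R_{e_p}$ meeting $p$ traverses the component $\Mrr$ of $M\setminus\Rrr(\Sss)$ containing $\mbox{int}(E_S)$. I would perform a simultaneous stem swap (via Lemma \ref{lemma:stemswap1mult} together with Definition \ref{defin:multipleconsistent}) replacing each such stem with an alternative stem in $\Mrr$ that avoids $\mbox{int}(E_S)$. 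Because the new stems lie in $\Mrr$ and so are disjoint from $R_0$, the swap-mate of $R_0$ is $R_0$ itself, and Proposition \ref{prop:Rrr-} yields $w(\Sss^{*})\le w(\Sss)$ for the resulting spine $\Sss^{*}$. The spine $\Sss^{*}$ now satisfies the clean condition, so the surgery argument above produces a collection of weight strictly less than $w(\Sss^{*})\le w(\Sss)$, contradicting minimality. I expect the hardest part to be the simultaneous bookkeeping --- ensuring the swaps eliminate all offending points of $\mbox{int}(E_S)\cap\Sss$ without reintroducing new ones --- which is precisely what the swap-consistency formalism of Section \ref{sect:stemswap} is built to manage.
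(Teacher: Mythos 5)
Your surgered sphere $\tilde{R}_0 = E \cup_c E_S^{+}$ (where $E$ is the half of $R_0$ containing $R_0 \cap \Sss$) is exactly the paper's sphere $R'_0$, and the ``clean'' case of your argument --- when $\mathrm{int}(E_S)\cap\Sss = \emptyset$ --- is correct and matches the paper. But the ``obstructing'' case has a genuine gap that the paper's proof is specifically designed to get around, and your proposed fix fails at precisely the hard spot.

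First, a small issue: if one of the offending points lies on $e_0$ itself, then $R_{e_p} = R_0$, which is \emph{not} disjoint from $E_S$ (they share $c$). The paper handles this separately (Claim~1) by local stem swaps on $e_0$ before anything else.

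The real problem is your claim that you can ``replace each such stem with an alternative stem in $\Mrr$ that avoids $\mathrm{int}(E_S)$,'' from which you conclude that ``the swap-mate of $R_0$ is $R_0$ itself.'' When $E_S$ separates $\Mrr$ (which you cannot rule out), this is impossible in general. Write $\Mrr\setminus E_S = \Mrr^{E_+}\sqcup \Mrr^{E_-}$, with $v_+$ (the endpoint of $e_0$ in $\Mrr$) in $\Mrr^{E_+}$, and note that $D\subset R_0$ (the half not containing $R_0\cap\Sss$) lies on $\bdd\Mrr^{E_-}$. A blossom $\fra$ sitting in $\Mrr^{E_-}$ whose stem crosses $E_S$ may have \emph{no} spine material in $\Mrr^{E_-}$ to reattach to --- the only spine points may all be on the $E_+$ side. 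Any alternative stem avoiding $E_S$ must then leave $\Mrr^{E_-}$ through $D$, hence through $R_0$, so it cannot stay in $\Mrr$. This is exactly why the paper passes to $\Mrr_-$, the component of $M\setminus(\Rrr - R_0)$ containing $R_0$: the new stem is allowed to pass through $D$, the swap-mate of $R_0$ is then not $R_0$ but precisely $R'_0 = \tilde R_0$, and Proposition~\ref{prop:Rrr-} delivers the \emph{strict} decrease $w(\Sss') \le w(\Sss) - |R_0\cap S| + |R'_0\cap S| \le w(\Sss)-1$ in one stroke. Your two-step plan (get $w(\Sss^*)\le w(\Sss)$ with $R_0$ unchanged, then surger) never gets started because the required swap need not exist, and even the preliminary reduction to the non-separating case (paper's Claims~2 and~3 plus the algebraic-intersection trick) is absent. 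The swap-mate identification $R'_0=\tilde R_0$ is the crux of the paper's proof; your proposal circles it without landing on it.
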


Note that for this proposition we don't care about how often the reducing edges of the spine $\Sss$ intersects $S$.  We revert to the notation $\Rrr$ for $\Rrr(\Sss)$.  

\begin{proof}  
Suppose, contrary to the conclusion, $\Rrr \cap S \neq \emptyset$.  Among the components of $\Rrr \cap S$ pick $c$ to be one that is innermost in $S$.   Let $E \subset S$ be the disk that $c$ bounds in $S$ and let $M_{\Rrr}$ be the component of $M - \Rrr$ in which $E$ lies.  
Let $R_0 \in \Rrr$ be the edge-reducing sphere on which $c$ lies, $e_0 \subset \Sss$ the corresponding edge, 
$p$ be the base $e_0 \cap R_0$ of $R_0$, and $D \subset R_0$ be the disk $c$ bounds in $R_0 - p$.  Finally, as in Proposition \ref{prop:Rrr-} let $\Rrr_- = \Rrr - R_0$ and $\Mrr_- \supset \Mrr$ be the component of $M - \Rrr_-$ that contains $R_0$.
\bigskip

{\bf Claim 1:}  After local stem swaps as in Proposition \ref{prop:simpleswap} we can take $e_0$ to be disjoint from $E$.

{\em Proof of Claim 1:} Let $v_{\pm}$ be the vertices at the ends of $e_0$, with $e_{\pm}$ the incident components of $e_0 - p$.  In a bicollar neighborhood of $R_0$, denote the side of $R_0$ incident to $e_{\pm}$ by respectively $R_{\pm}$, with the convention that a neighborhood of $\bdd E$ is incident to $R_+$.  It is straightforward to find a point $p' \in R_0$ and arcs $e'_{\pm}$ in $\Mrr - E$, each with one end at the respective vertex $v_{\pm}$ and other end incident to $p'$ via the respective side $R_{\pm}$.  

It is not quite correct that replacing each of $e_{\pm}$ with $e'_{\pm}$ is a local stem swap, since the arcs are incident to $R_0$ at different points.  But this can be easily fixed: Let $\gamma$ be an arc from $p'$ to $p$ in $R_0$ and $\gamma_{\pm}$ be slight push-offs into $R_{\pm}$.  Then replacing each $e_{\pm}$ with respectively $e'_{\pm} \cup \gamma_{\pm}$ is a local stem swap.  Attach the two arcs at $p \in R_0$ to get a new reducing edge $e'_0$ for $R_0$, and then use the arc $\gamma$ to isotope $e'_0$ back to the reducing edge $e'_+ \cup e'_-$, which is disjoint from $E$, as required.  See Figure \ref{fig:claim1swap}.   Revert to $e_0, p$ etc as notation for $e'_+ \cup e'_-$, now disjoint from $E$.  This proves Claim 1.

    \begin{figure}[th]
     \labellist
\small\hair 2pt
\pinlabel  $R_0$ at 40 70
\pinlabel  $p'$ at 20 85
\pinlabel  $v_+$ at 80 90
\pinlabel  $p$ at 40 175
\pinlabel  $v_-$ at 15 195
\pinlabel  $\gamma$ at 35 120
\pinlabel  $E\subset S$ at 100 200
\pinlabel  $e_+$ at 63 120
\pinlabel  $e'_+$ at 50 108
\pinlabel  $e'_-$ at 10 140
\pinlabel  $R_+$ at 80 250
\pinlabel  $R_-$ at 30 250
\pinlabel  $e'_+\cup\gamma_+$ at 230 115
\pinlabel  $e'_+\cup e'_-$ at 380 110
\endlabellist
    \centering
    \includegraphics[scale=0.8]{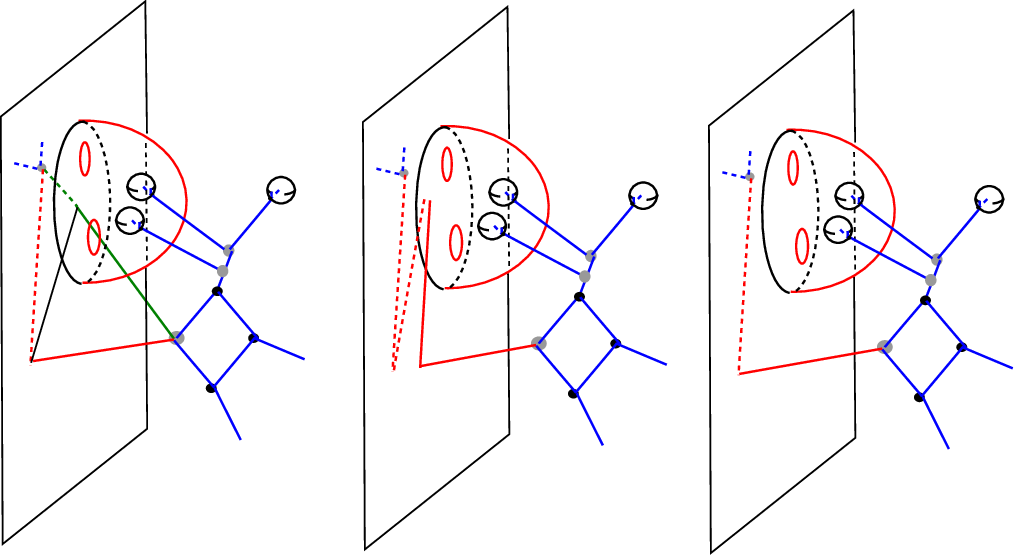}
\caption{Making $e_0$ disjoint from $E$ by local stem swaps} 
 \label{fig:claim1swap}
    \end{figure}
    \bigskip

{\bf Claim 2:}  After local stem swaps we can assume that each stem that intersects  $E$, intersects it always with the same orientation.

{\em Proof of Claim 2:} Figure \ref{fig:onept} shows how to use a local stem swap to cancel adjacent intersections with opposite orientations, proving the claim.
\bigskip

    \begin{figure}[th]
 \labellist
\small\hair 2pt
\pinlabel  $S$ at 75 105
\endlabellist
    \centering
    \includegraphics[scale=0.5]{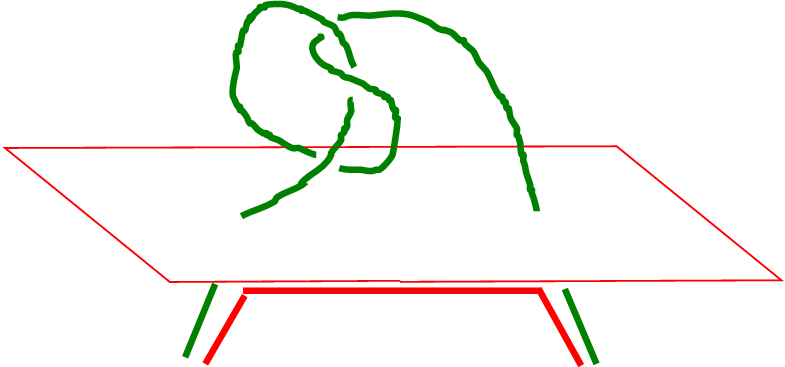}
\caption{A local stem swap} 
 \label{fig:onept}
    \end{figure}

Notice that if $E$ is non-separating in $\Mrr$ we could do a local stem swap so that each stem intersects $E$ algebraically zero times.  Following Claim 2, this implies that we could make all stems disjoint from $E$.  Once $E$ intersects no stems, replace the subdisk $D$ of $R_0$ that does not contain $p$ with a copy of $E$.  The result $R'_0$ is still an edge-reducing sphere for $e_0$, but the circle $c$ (and perhaps more circles) of intersection with $S$ has been removed.  That is 
$$w(R'_0) = |R'_0 \cap S| \leq |R_0 \cap S| -1 = w(R_0) - 1.$$
Hence $w(\Sss') < w(\Rrr) = w(\Sss)$, contradicting our hypothesis that $w(\Sss)$ is minimal.  

So we henceforth proceed under the assumption that $E$ is separating, but hoping for the same conclusion: that we can arrange for all stems to be disjoint from $E$, so that $R'_0$ as defined above leads to the same contradiction.  Since $E$ is separating, a stem that always passes through $E$ with the same orientation can pass through at most once.  So we henceforth assume that each stem that intersects $E$ intersects it exactly once.

In a bicollar neighborhood of the disk $E$, let $E_+$ be the side of $E$ on which $v_+$ lies, and $E_-$ be the other side of $E$.  Consider a stem $\sss$ of a boundary sphere $\fra$ of $\Mrr_-$.  If $\sss$ intersects $E$, the subsegment of $\sss - E$ that is incident to the blossom $\fra$ passes through one of $E_{\pm}$.  Let $\hat{\sss}_{\pm}$ be the collection of those stems intersecting $E$ for which this subsegment passes through respectively $E_{\pm}$.  If $\sss \in \hat{\sss}_+$, it is straightforward to find an alternate stem $\sss'$ from $\fra$ to a point very near $v_+$ so that $\sss'$ misses $E$.  A stem swap to $\sss'$ is as in Proposition \ref{prop:stemswap}, and so by Corollary \ref{cor:stemswap5} does not increase weight.  Hence we have proven:

{\bf Claim 3}:  After stem swaps, we may assume that each stem that intersects $E$ is in $\hat{\sss}_-$.
\bigskip

Following Claim 3, we move to swap those stems in $\hat{\sss}_-$ for ones that are disjoint from $E$.  Let $\sss$ be the stem of a boundary sphere $\fra$ of $\Mrr_-$, and assume that $\sss \in \hat{\sss}_-$.  Then it is straightforward to find an alternate stem $\sss'$ for $\fra$ that is disjoint from $E$ and ends in a point very near $v_-$, for example by concatenating an arc in $E_-$ with an arc in $R_-$ and an arc parallel to $e_-$.  See Figure \ref{fig:keyswap}.  A problem is, that such an arc intersects the disk $D \subset R_0$, so, after such a swap, $R_0$ is no longer an edge-reducing sphere for the new spine.  However, if such swaps are performed simultaneously on all stems in $\hat{\sss}_-$, we have seen that the swap-mate of $R_0$ is an edge-reducing sphere for the new spine $\Sss'$, as required.  But observe in Figure \ref{fig:keyswap} that the swap-mate is exactly $R'_0$!  So we can now appeal to Proposition \ref{prop:Rrr-}:
$$w(\Sss') \leq w(\Sss) - |R_0 \cap S|+  |R'_0 \cap S| \leq w(\Sss) - 1.$$
The contradiction proves Proposition \ref{prop:main2}.
\end{proof}

\begin{figure}[th]
\labellist
\small\hair 2pt
\pinlabel  $\sss'$ at 70 160
\pinlabel  $\sss$ at 120 190
\pinlabel  $v_+$ at 105 155
\pinlabel  $v_-$ at 15 70
\pinlabel $E$ at 120 270
\pinlabel $p$ at 40 90
\pinlabel  $D$ at 50 255
\pinlabel  $R_0$ at 80 340
\endlabellist
    \centering
    \includegraphics[scale=0.7]{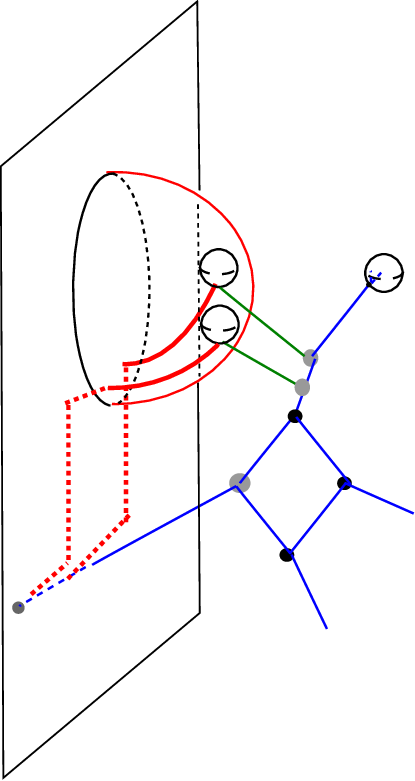}
\caption{} 
 \label{fig:keyswap}
    \end{figure}

\section{Conclusion} \label{sect:final}

\begin{prop}  \label{prop:final}
Suppose $\Sss$ intersects $S$ only in reducing edges, and the associated set $\Rrr$ of edge-reducing spheres is disjoint from $S$.  Then $T$ can be isotoped (via edge slides of $\Sss$) so that  $S$ is aligned with $T$.  \end{prop}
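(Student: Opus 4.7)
The plan is to induct on the lexicographic complexity $(g,s)$ of $(M,T)$, using Assumption \ref{ass:induct}. For the base case $\Rrr = \emptyset$, the hypothesis that $\Sss$ meets $S$ only along reducing edges forces $\Sss \cap S = \emptyset$. Since $\bdd_- B \subset \Sss$, this in particular says $\bdd S \cap \bdd_- B = \emptyset$, so $S$ must be a sphere lying entirely in $M - \Sss$. As $M - \Sss$ deformation retracts onto $A$, a short ambient isotopy of $\Sss$ rel $\bdd_- B$ (equivalently, edge slides followed by a thinning of $\eta(\Sss)$) puts $T = \bdd\eta(\Sss)$ entirely off of $S$, giving $|S \cap T| = 0$ as required.

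For the inductive step $\Rrr \neq \emptyset$, I will cut $M$ along all of $\Rrr$ to produce $M' = M - \eta(\Rrr)$ with induced Heegaard splittings $A' \cup_{T'} B'$ and induced spine $\Sss'$ for $B'$, exactly as in Corollary \ref{cor:stemswap}. Each sphere of $\Rrr$ is a reducing sphere for $T$ (it meets $T$ in a single essential meridian around its reducing edge), so every component of $M'$ has strictly lower complexity than $(M,T)$, and Assumption \ref{ass:induct} applies. Since $S$ is connected (the case to which Proposition \ref{prop:S0} has reduced us) and disjoint from $\Rrr$, all of $S$ lies in a single component $\Mrr$ of $M'$. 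Invoking Assumption \ref{ass:induct} on $\Mrr$ produces, via edge slides of $\Sss'|_\Mrr$, an isotopy after which $S$ is aligned with $T_\Mrr$; the \emph{moreover} clause of Theorem \ref{thm:main} further ensures that the annuli of $S \cap B'|_\Mrr$ form a vertical family of spanning annuli. The other components of $M'$ require no action since $S$ avoids them.

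The main obstacle is then the re-gluing: each $R \in \Rrr$, when restored, fuses two incident stems into a single edge of a new spine $\tilde\Sss$ for $B$ in $M$, and I must ensure that $\tilde\Sss$ does not gain extra intersections with $S$. After the inductive alignment, $|S \cap \Sss'|_\Mrr| \leq 1$ since $S$ is connected and aligned, so at most one edge of $\Sss'|_\Mrr$ meets $S$, and this edge may or may not itself be a stem at some sphere of $\Rrr$. I will combine stem swaps within $\Mrr$ (Corollary \ref{cor:stemswap}) with Proposition \ref{prop:vertdodge}---applied with $\calA$ the vertical annuli and $\calD$ the disks of $S \cap B'|_\Mrr$, and the stems at each $R$ playing the role of the vertical arcs $\hat\beta$---to properly isotope each stem rel its endpoint on $R$ so as to avoid $S$, while concentrating the single permitted intersection (if any) onto one specific edge. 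Fusing the resulting stems (now disjoint from $S$) and reassembling then produces a spine $\tilde\Sss$ in $M$ with $|S \cap \tilde\Sss| \leq 1$, hence $|S \cap T| \leq 1$, which is the required alignment. The composite procedure---inductive edge slides in $\Mrr$, stem swaps at the spheres of $\Rrr$, and the topological re-gluing---is realized in $M$ as an ambient isotopy of $T$ via edge slides of $\Sss$, as promised.
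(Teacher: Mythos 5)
Your approach---induct on the complexity $(g,s)$ by cutting $M$ along all of $\Rrr$, invoke Assumption~\ref{ass:induct} inside the component $\Mrr$ containing $S$, and re-glue---is genuinely different from the paper's. The paper never cuts along $\Rrr$ and inducts; instead it works directly in $\Mrr$ and aligns $S$ with $T$ by a sequence of stem swaps at the blossoms of $\Rrr_0$, using the innermost disk $E \subset S$ and the swap-mate construction (see the proof of Proposition~\ref{prop:main2}) followed by the direct geometric argument of Proposition~\ref{prop:final} Cases 1 and 2.

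The critical gap in your argument is the unjustified assertion that ``each sphere of $\Rrr$ is a reducing sphere for $T$ (it meets $T$ in a single essential meridian around its reducing edge), so every component of $M'$ has strictly lower complexity.'' The meridian of a reducing edge $e$ need not be essential in $T$: for instance, if $e$ is the stem of a flower whose blossom is a sphere component $F$ of $\bdd_- B$, a perfectly legitimate edge-reducing sphere $R_e$ is one parallel to $F$, and its intersection with $T$ bounds a disk in $T$ (namely the piece of $T$ enclosing the collar of $F$ and the outer segment of $e$). The hypotheses of Proposition~\ref{prop:final} do not preclude this---indeed $\Rrr$ need only be disjoint from $S$---and such a configuration arises naturally when $S$ separates a flower from the rest of the spine with the stem crossing $S$ several times. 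In that case cutting along $R_e$ leaves $\Mrr$ with the same genus and the same number of boundary spheres (you lose $F$ but gain the copy of $R_e$), so $\Mrr \cong M$ and the complexity does not drop. Assumption~\ref{ass:induct} then does not apply and the induction does not terminate. (There are two lesser issues worth flagging as well: in the base case $\Rrr = \emptyset$, a disk component of $S$ is not ``lying entirely in $M - \Sss$,'' since $\bdd S \subset \bdd_- B \subset \Sss$---though the alignment conclusion still holds because $\mathrm{int}(S) \cap \Sss = \emptyset$; and in the re-gluing step, the stems you want to feed into Proposition~\ref{prop:vertdodge} live as properly embedded arcs in $A_{\Rrr}$, not as spanning arcs of $B_{\Rrr}$, so that proposition does not apply as stated---but once $S$ is aligned with $T_\Mrr$ and $\eta(\Rrr)$ is taken thin enough to miss $S$, no further adjustment is actually needed.)
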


\begin{proof}  We will proceed by stem swaps, chosen so that they do not affect the hypothesis that $\Rrr \cap S = \emptyset$.  Let $\Mrr$ be the component of $M - \Rrr$ that contains $S$, and $\Rrr_0 \subset \bdd \Mrr$ the collection of sphere components that come from $\Rrr$. In $\Mrr$ each $\fra \in \Rrr_0$ is the blossom of a flower whose stem typically intersects $S$.  (A non-separating sphere in $\Rrr$ may appear twice in $\Rrr_0$, with one or both stems intersecting $S$.)  Denote by $\hat{\sss}$ the collection of all stems of $\Rrr_0$ that intersect $S$. The proof will be by induction on $|\hat{\sss} \cap S|$.  If $|\hat{\sss} \cap S| = 0$ then either $S$ is a sphere disjoint from $\Sss$ and therefore aligned, or $S$ is a disk.  In the latter case our convention of which compression body to call $B$ has $\bdd S \subset \bdd_- B \subset \Sss$, so $T \cap S$ is a single circle parallel to $\bdd S$ in $S$.  Again this means that $S$ is aligned.  Suppose then that $|\hat{\sss} \cap S| > 0$ and inductively assume that the Proposition is known to be true for lower values of $|\hat{\sss} \cap S|$.  Consider the possibilities:
\medskip

{\bf Case 1:} $S$ is a disk.

%Our convention (the choice of which compression body to call $B$) determines that $\bdd S \subset \bdd_- B$.  
Since $|\hat{\sss} \cap S| > 0$ there is a blossom $\fra \in \Rrr_0$ with stem $\sss \in \hat{\sss}$.  Let $\sss_{\fra} \subset \sss$ be the segment of $\sss - S$ whose interior is disjoint from $S$ and whose end points are the blossom $\fra$ and a point $p_{\fra}$ in $S$. Let $\gamma$ be an arc in $S$ that runs from $p_{\fra}$ to $\bdd S$ that avoids all other points of $\hat{\sss} \cap S$.  Push the arc $\gamma \cup \sss_{\fra}$ off of $S$ in the direction of $\sss_{\fra}$ so that it becomes a stem $\sss'$ for $\fra$.  Do a stem swap from $\sss$ to $\sss'$, and let $\Sss'$ be the result.  See Figure \ref{fig:final2}. $\sss'$ is disjoint from $S$, so $\sss$ is thereby removed from $\hat{\sss}$, lowering  $|\hat{\sss} \cap S|$ by at least one.  The stem swap does not affect other reducing edges or their edge-reducing spheres, so the latter remain disjoint from $S$.  By Proposition \ref{prop:stemswap} $\Sss'$ is still a spine of $B$, so $T$ is isotopic in $M$ to a regular neighborhood of $\Sss'$.  The inductive hypothesis implies that then $T$ can be isotoped so that $S$ is aligned with $T$, as required.

 \begin{figure}
    \centering
    \labellist
\small\hair 2pt
 \pinlabel  $S$ at 150 110
% \pinlabel  $\sss_{\fra_1}$ at 140 90
  \pinlabel  $\sss_{\fra}$ at 45 160
  % \pinlabel  $\fra_1$ at 160 40
  \pinlabel  $\fra$ at 95 190
%\pinlabel  $S_+$ at -10 130
%\pinlabel  $S_-$ at -10 80
\pinlabel  $\gamma$ at 30 120
%\pinlabel  $\gamma_{\fra_1}$ at 160 120
%\pinlabel  $p_{\fra_1}$ at 110 112
\pinlabel  $p_{\fra}$ at 50 130
% \pinlabel  $\sss'_{\fra_1}$ at 440 60
  \pinlabel  $\sss'$ at 300 130
\endlabellist
\centering
      \includegraphics[scale=0.7]{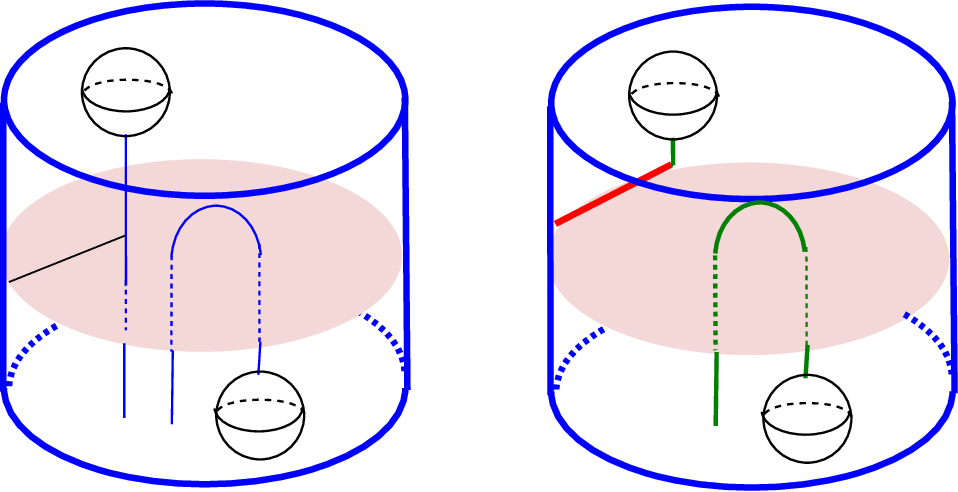}
\caption{Swap lowering $|\hat{\sss} \cap S|$, $S$ a disk} 
 \label{fig:final2}
    \end{figure}

\medskip

{\bf Case 2:} $S$ is a sphere.

Although $S$ could be non-separating in $M$, it cannot be non-separating in $\Mrr$.  Here is the argument: 
%As in the previous case we focus on sphere components of $\bdd \Mrr$ that correspond to spheres in $\Rrr$ and which have  stems in $\Mrr$ that intersect $S$. 
Suppose $S \subset \Mrr$ is non-separating.  If $\hat{\sss}$ were disjoint from $S$ then $S$ would have no intersections with the Heegaard surface $T$ at all and so $S \subset A$.  But in a compression body such as $A$, all spheres separate, a contradiction.  We will inductively reach the same contradiction by showing that if $\hat{\sss}$ does intersect $S$ there is a local stem swap that lowers $|\hat{\sss} \cap S|$: Since $S$ is non-separating there is a circle $c$ in $\Mrr - \Sss$ that intersects $S$ in a single point $p$.  Let $\gamma$ be a path in $S$ from $p$ to a point in $\sss \cap S$, where $\sss \in \hat{\sss}$ and $\gamma$ is chosen so that its interior is disjoint from $\hat{\sss}$.  Band sum $\sss$ to $\gamma$ along a band perpendicular to $S$, with $\gamma$ as its core.  The result is an edge $\sss'$ that is obtained from $\sss$ by a local stem swap snd intersects $S$ in one fewer point than $\sss$ does, as required.  See Figure \ref{fig:final3}.  

 \begin{figure}
    \centering
    \labellist
\small\hair 2pt
 \pinlabel  $S$ at 150 110
% \pinlabel  $\sss_{\fra_1}$ at 140 90
%  \pinlabel  $\sss_{\fra_2}$ at 45 160
%   \pinlabel  $\fra_1$ at 160 40
%  \pinlabel  $\fra_2$ at 95 190
\pinlabel  $c$ at 15 150
%\pinlabel  $S_-$ at -10 80
\pinlabel  $\gamma$ at 40 100
%\pinlabel  $\gamma_{\fra_1}$ at 160 120
\pinlabel  $p$ at 20 80
%\pinlabel  $p_{\fra_2}$ at 75 130
% \pinlabel  $\sss'_{\fra_1}$ at 440 60
  \pinlabel  $\sss'$ at 280 140
\endlabellist
\centering
      \includegraphics[scale=0.7]{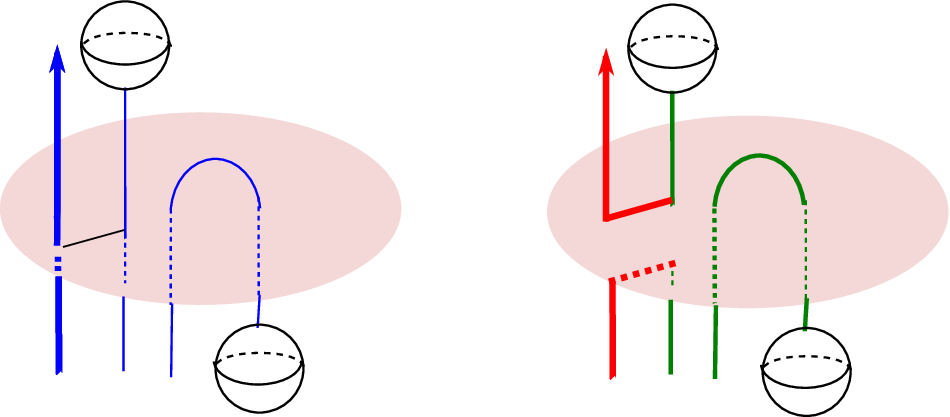}
\caption{Swap lowering $|\hat{\sss} \cap S|$, $S$ a non-separating sphere} 
 \label{fig:final3}
    \end{figure}

So $S$ is separating in $\Mrr$. This implies that no stem can intersect $S$ more than once algebraically and so, following local stem swaps as in Claim 2 of Proposition \ref{prop:main2} (see Figure \ref{fig:onept}), no  more than once geometrically.  If no stem intersects $S$ at all, then $S \subset A$ and so $S$ is aligned, finishing the proof.  

Suppose, on the other hand, there is at least one stem $\sss_!$ that intersects $S$ exactly once.  Repeat the argument of Case 1 for all stems other than $\sss_!$, using the point $p_! = \sss_i \cap S$ in place of $\bdd S$ in the argument. The result is that, after a sequence of stem swaps, all stems other than $\sss_!$ are disjoint from $S$.  This means that $S \cap \Sss$ consists of the single point $p_!$.  In other words, $T$ intersects $S$ in a single circle, and so $S$ is aligned.
 \end{proof}

The sequence of Proposition \ref{prop:S0}, Lemma \ref{lemma:isolate}, Proposition \ref{prop:main2}  and Proposition \ref{prop:final} establishes Theorem \ref{thm:main}.  \qed

\section{The Zupan example} \label{section:Zupan}

Some time ago, Alex Zupan proposed a simple example for which the Strong Haken Theorem seemed unlikely \cite{Zu}.  The initial setting is of a Heegaard split $3$-manifold $M = A \cup_T B$ that is the connected sum of compact manifolds $M_1, M_2, M_3$ as shown in Figure \ref{fig:toZupan1}.  The blue indicates the spine $\Sss$ of $B$, say and, following our convention throughout the proof, $B$ is to be thought of as a thin regular neighborhood of $\Sss$.  The spine is not shown inside of the punctured summands $M_1$ and $M_2$ because those parts are irrelevant to the argument; psychologically it's best to think of these as spherical boundary components of $M$ lying in $\bdd_- B$, so $M_1$ and $M_2$ are balls.  

In the figure, $M_3$ is a solid torus and what we see is the punctured $M_3$, lying in $M$ as a summand.  We will continue the argument for this special case, in which $M_3$ is a solid torus and $M_1, M_2$ are balls, but the argument works in general.   An important role is played by the complement $A$ of $\Sss$ outside $M_1$ and $M_2$.  This is a solid torus: indeed, the region in the figure between the torus and the cyan balls is a twice punctured solid torus; $A$ is obtained by removing both a collar of the torus boundary component and the blue arcs, all part of $\Sss$.  Removing the collar does not change the topology, but removing the blue arcs changes the twice-punctured solid torus into an unpunctured solid torus $A$.  

Zupan proposed the following sort of reducing sphere $S$ for $M$: the tube sum of the reducing spheres for $M_1$ and $M_2$ along a tube in $M_3$ which can be arbitrarily complicated.  The outside of the tube is shown in red in Figure \ref{fig:toZupan1}.   The reducing sphere $S$ is not aligned with $T$ because it intersects $\Sss$ in two points, one near each of $M_1$ and $M_2$.  The goal is then to isotope $T$ through $M$ so that it will be aligned with $S$.  This is done by modifying $\Sss$ by what is ultimately a stem swap, and we will describe how the stem swap is obtained by an edge-slide of $\Sss$.  The edge-slide induces an isotopy of $T$ in $M$ because $T$ is the boundary of a regular neighborhood of $\Sss$.  Note that in such an edge slide, passing one of the blue arcs through the red tube is perfectly legitimate.  

\begin{figure}[ht!]
\labellist
\small\hair 2pt
 \pinlabel  $M_1$ at 50 45
  \pinlabel  $M_2$ at 280 45
    \pinlabel  $M_3$ at 250 200
\endlabellist
    \centering
    \includegraphics[scale=0.5]{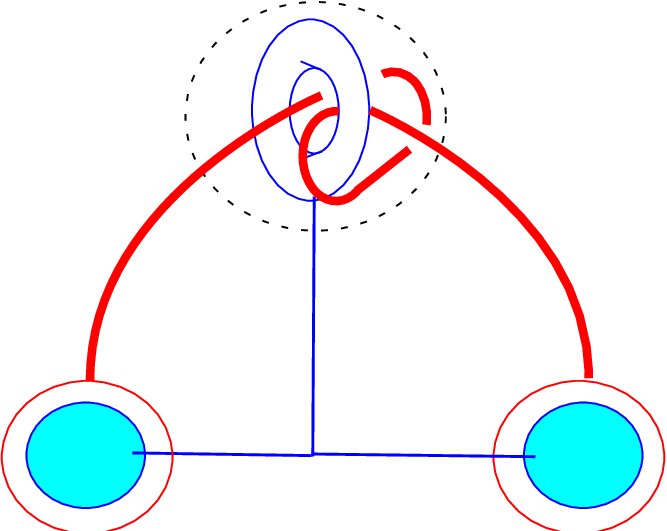}
    \caption{The initial setting}
    \label{fig:toZupan1}
    \end{figure}
    
Figure \ref{fig:toZupan2} is the same, but we have distinguished part of $\Sss$ (the rightmost edge) by turning it teal and beginning to slide it on the rest of the spine.
    
\begin{figure}[ht!]
    \centering
    \includegraphics[scale=0.5]{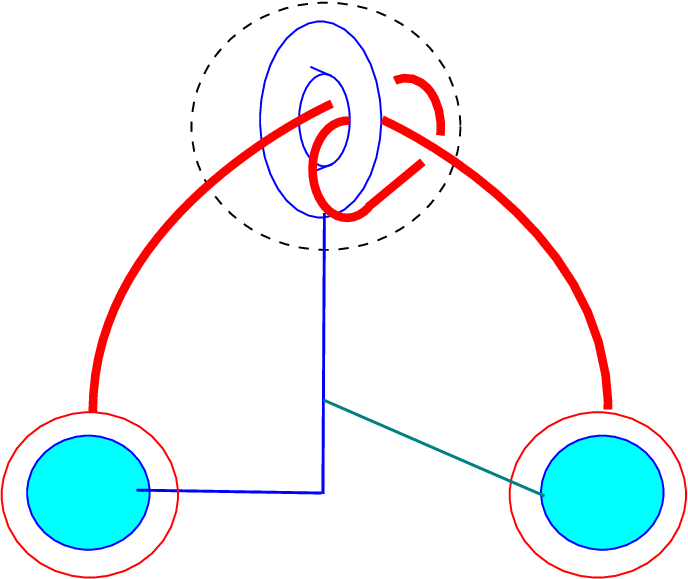}
    \caption{One blue edge now teal}
    \label{fig:toZupan2}
    \end{figure}
    
Now we invoke the viewpoint and notation of Proposition \ref{prop:stemswap}: There is a related Heegaard splitting of $M$ available to us, in which the sphere boundary component at $M_2$ is not viewed as part of $\bdd_- B$ but as part of $\bdd_- A$, and the teal arc is also added to $A$.  This changes $A$ into a punctured solid torus $A_+$ and the spine of its complement into $\Sss_-$, obtained by deleting from $\Sss$ both the teal edge and the sphere boundary component at $M_2$.   
    
\begin{figure}[ht!]
    \centering
    \includegraphics[scale=0.5]{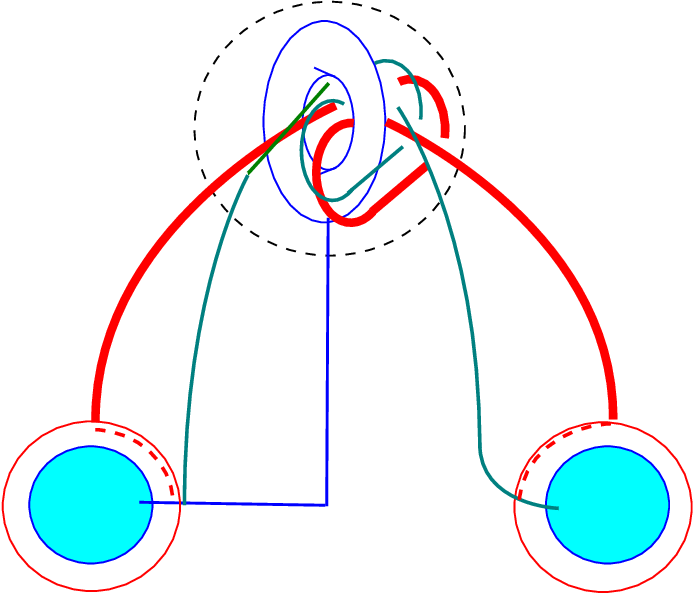}
    \caption{Teal edge now homotopic to red tube}
    \label{fig:toZupan3}
    \end{figure}
    
 And so we apply Lemma \ref{lemma:Zupan}, 
with $A_+$ playing the role of compression-body $C$; the boundary sphere at $M_2$ playing the role of the point $r$; the other end of the teal arc playing the role of $q$; the teal arc playing the role of $\bbb$; and the union of the core of the red tube and the two dotted arcs in Figure \ref{fig:toZupan3} playing the role of $\aaa$.  Specifically, as the proof of Lemma  \ref{lemma:Zupan} describes, because $\pi_1(\bdd A_+) \to \pi_1(A_+)$ is surjective, and the slides take place in $\bdd A_+$, one can slide the end of the teal arc around on the rest of $\Sss_-$ (technically on the boundary of a thin regular neighborhood of $\Sss_-$) until it is {\em homotopic} rel end points to the path that is the union of the core of the tube of $S$ and the two dotted red arcs shown in  Figure \ref{fig:toZupan3}. Hass-Thompson \cite[Proposition 4]{HT} then shows that $\aaa$ and $\bbb$ are isotopic rel end points. 
 
 The result of the isotopy is shown in Figure \ref{fig:toZupan5}; the teal edge now goes right through the tube, never intersecting $S$. Thus $S$ now intersects $\Sss$ in only a single point, near the boundary sphere at $M_1$.  In other words, $S$ is aligned with $T$.   
    
\begin{figure}[th!]
    \centering
    \includegraphics[scale=0.5]{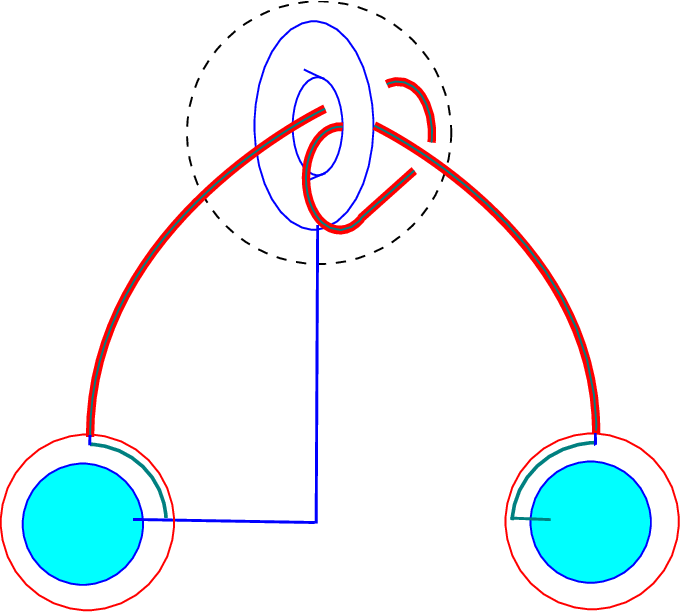}
    \caption{Teal edge isotoped into red tube}
    \label{fig:toZupan5}
    \end{figure}

\end{document}